\newtheorem{theorem}{Theorem}
\newtheorem{lemma}[theorem]{Lemma}
\newtheorem{proposition}[theorem]{Proposition}
\newtheorem{corollary}[theorem]{Corollary}
\newtheorem*{mthm}{Main Theorem}
\newtheorem*{mcor}{Corollary}
\newtheorem*{definition}{Definition}
\theoremstyle{remark}
\newtheorem*{Rem}{Remark}
\newtheorem*{Rems}{Remarks}
\newtheorem*{FRems}{Final Remarks}
\newcommand{\Proof}{\begin{proof}}
\newcommand{\card}[1]{\lvert#1\rvert}
\newcommand{\abs}[1]{\lvert#1\rvert}
\newcommand{\norm}[1]{\lVert#1\rVert}
\newcommand{\iin}{\!\in\!}
\newcommand{\complex}{\mathbb{C}}
\newcommand{\Rgroup}{\mathbb{R}}
\newcommand{\Tgroup}{\mathbb{T}}
\newcommand{\Nnat}{\mathbb{N}}
\newcommand{\Meas}{\mathbf{M}}
\newcommand{\sMeas}{\mathbf{M_s}}
\newcommand{\aMeas}{\mathbf{M_a}}
\newcommand{\ssMeas}{\mathbf{M_{ss}}}
\newcommand{\aiMeas}{\mathbf{M_{ai}}}
\newcommand{\MeasG}{\Meas(G)}
\newcommand{\sMeasG}{\sMeas(G)}
\newcommand{\aMeasG}{\aMeas(G)}
\newcommand{\ssMeast}[1]{\Meas_{\mathbf{ss},#1}}
\newcommand{\aiMeast}[1]{\Meas_{\mathbf{ai},#1}}
\newcommand{\unitball}{\mathbf{B_1}}
\newcommand{\scrC}{\mathcal{C}}
\newcommand{\scrD}{\mathcal{D}}
\newcommand{\scrO}{\mathcal{O}}
\newcommand{\scrK}{\mathcal{K}}
\newcommand{\ssubgr}{\mathcal{K}_{\chi(G)}^\circ}
\newcommand{\wstar}{weak$^\ast$\,}
\newcommand{\conv}{\star}
\newcommand{\cl}[1]{\overline{#1}}
\DeclareMathOperator{\lspan}{span}
\newcommand{\squ}{\mathbin{\square}}
\date{October 8, 2015}
\begin{document}
\title{Proof of the Ghahramani--Lau conjecture}
\author{Viktor Losert}
\address{Institut f\"ur Mathematik, Universit\"at Wien\\ Strudlhofg.\ 4\\
  A 1090 Wien, Austria}
\email{viktor.losert@univie.ac.at}
\author{Matthias Neufang}
\address{School of Mathematics and Statistics, Carleton University,
1125 Colonel By Dr., \\
Ottawa, Ontario K1S 5B6, Canada}
\address{Laboratoire de Math\'{e}matiques Paul Painlev\'{e}
(UMR CNRS 8524),
Universit\'{e} Lille 1 -- Sciences et Technologies, UFR de Math\'{e}matiques,
    59655 Villeneuve d'Ascq Cedex, France}
\email{mneufang@math.carleton.ca, matthias.neufang@math.univ-lille1.fr}
\author{Jan Pachl}
\address{The Fields Institute for Research in Mathematical Sciences,
222 College St., \\
Toronto, Ontario M5T 3J1, Canada}
\thanks{Matthias Neufang and Juris Stepr\={a}ns were partially supported
by NSERC; this support is greatfully acknowledged.
Jan Pachl appreciates the opportunity of working in the supportive environment
at the Fields Institute.}
\author{Juris Stepr\={a}ns}
\address{Department of Mathematics and Statistics, York University,
4700 Keele St., \\
Toronto, Ontario M3J 1P3, Canada}
\email{steprans@yorku.ca}
\subjclass[2010]{43A10, 28C10, 46E27}
\keywords{locally compact group, measure algebra, Arens product,
topological centre, thin measure, separation of measures}

\begin{abstract}
\noindent
The Ghahramani--Lau conjecture is established; in other words,
the  measure algebra of every locally compact group is strongly Arens
irregular. To this end, we introduce and study certain new classes of measures
(called approximately invariant, respectively, strongly singular) which
are of interest in their own right. Moreover, we show that the same result
holds for the measure algebra of any (not necessarily locally compact)
Polish group.
\end{abstract}
\maketitle
\renewcommand{\baselinestretch}{1.2}\normalsize
\section{Introduction}
    \label{sec:intro}

The study of the Arens products on the second dual of a Banach algebra has been an
active area of functional analysis and abstract harmonic analysis for many years.
As is well-known, operator algebras (and their quotient algebras) are Arens
regular, i.e., both Arens products on the second dual coincide. However, the
situation is radically different for group algebras such as the convolution
algebra $L_1(G)$ over a locally compact group $G$: building on the pioneering
work (in the abelian case) of Civin--Yood~\cite{CiYo} from 1961, N.J. Young
\cite{Yo} showed in 1973 that $L_1(G)$ is never Arens regular unless $G$ is
finite. It was thus natural to ask how irregular the multiplication in the
bidual of $L_1(G)$ is -- this was only settled in 1988 by
Lau--Losert~\cite{LauLosert}
who showed that $L_1(G)$ is strongly Arens irregular
(in the terminology established in~\cite{DaLa}); in other words, left
multiplication by $m \in L_1(G)^{**}$ on $L_1(G)^{**}$ is the same with
respect to both Arens products only if $m \in L_1(G)$, and this holds
as well for right multiplication by $m$.

Since the measure algebra $\MeasG$ contains $L_1(G)$ as a closed subalgebra
(in fact, as an ideal), it is clear by Young's theorem that $\MeasG$ is only
Arens regular for finite groups $G$. It was conjectured by
Lau~\cite{Lau1994ffs} and
Ghahramani--Lau~\cite{GhLa} that, as in the case of $L_1(G)$, the measure
algebra $\MeasG$ is also strongly Arens irregular for any locally compact
group $G$. In \cite{Neu}, this was established for two classes of locally
compact non-compact groups: those whose cardinality is a non-measurable
cardinal, and those for which the relation $\kappa(G) \geq 2^{\chi(G)}$ holds,
where $\kappa(G)$ denotes the compact covering number, and $\chi(G)$ the
local weight, also known as the character.

In this paper, we prove the Ghahramani-Lau conjecture for all locally compact
groups.

\section{Main Result and some basics}\vspace{1mm}
    \label{sec:notation}

Vector spaces are over the reals $\Rgroup$ or over the complex field
$\complex$.
The linear span of a set $D$ in a vector space is \,$\lspan(D)$.
When $M$ is a normed space, $\unitball (M)$ is the unit ball in $M$,
and $M^\ast$ is the dual of $M$.
For $f\iin M^\ast,\;\mu\iin M$,
the value of the functional $f$ at $\mu$ will be written as
$\langle f,\mu\rangle$.
As usual, $M$ is identified with a subspace of the second dual
$M^{\ast\ast}$ by the canonical embedding (which amounts to
$\langle \mu,f\rangle=\langle f,\mu\rangle$).

Now assume that $M$ is a Banach algebra. The multiplication $\conv$ of $M$
is extended to the \emph{left} (or first) \emph{Arens product} $\squ$ on
$M^{\ast\ast}$ as follows,
first defining a right action $\cdot$ of $M$ on $M^\ast$ and then a left
action $\odot$ of $M^{\ast\ast}$ on $M^\ast$\,:
\begin{eqnarray*}
\langle h\cdot\mu,\,\nu\,\rangle & = & \langle\,h\,,\mu\conv\nu\rangle
	\text{\;\;for\;\;} h\iin M^\ast,\;\mu,\nu\iin M\,,\\
\langle\mathfrak{n}\odot h,\,\mu\,\rangle & =
& \langle\,\mathfrak{n}\,,h\cdot\mu\rangle \text{\;\;\;for\;\;}
    \mathfrak{n}\iin M^{\ast\ast},\; h\iin M^\ast,\; \mu\iin M\,,\\
\langle\mathfrak{m}\squ\mathfrak{n},\,h\,\rangle & = &
    \langle\,\mathfrak{m}\, , \mathfrak{n} \odot h \rangle \text{\;\;for\;\;}
    \mathfrak{m},\mathfrak{n}\iin M^{\ast\ast},\; h\iin M^\ast .
\end{eqnarray*}
Note that if $\mu,\nu\iin M$ and $h\iin M^\ast$
then (using the embedding of $M$ into $M^{\ast\ast}$)
\[\tag{\#}\langle\mu\squ\nu,\,h\,\rangle=
\langle\nu\odot h,\,\mu\,\rangle =
\langle h\cdot\mu,\,\nu\,\rangle  =
\langle\, h\,,\mu\conv\nu\rangle\,.\]

\noindent The (left) {\it topological centre} of $M^{\ast\ast}$ is defined as
\[
Z_t(M^{\ast\ast})
= \{\; \mathfrak{m} \iin M^{\ast\ast}  : \;
\text{the\;mapping\ \;} \mathfrak n\mapsto\mathfrak m\squ\mathfrak n
\text{\ is\;\wstar continuous\;on\;} M^{\ast\ast} \,\}.
\]
It is easy to show that $M\subseteq Z_t(M^{\ast\ast})$ \;(see
\cite{Dales}\;p.\,248ff. for further discussion of Arens products and
topological centres). There is a second canonical method to extend the
multiplication of $M$\,. It gives the right (or second) Arens product. The
left topological
centre consists of those elements $\mathfrak m\iin M^{\ast\ast}$ for which
the two Arens products coincide whenever $\mathfrak{m}$ is the left factor
(\cite{Dales}\;Def.\,2.6.19). In \cite{DaLa}\; refined notions have been
introduced. There, $Z_t$ is denoted as $Z_t^{(1)}$ and one has a
corresponding notion of right topological centre $Z_t^{(2)}$. Then  (as the
minimal case) $M$ is called {\it strongly Arens irregular} if both topological
centres coincide with $M$ (\cite{DaLa}\;Def.\,2.18).\vspace{1mm}

When $\Omega$ is a locally compact topological space,
$\Meas(\Omega)$ is the Banach space of bounded real or complex Radon measures
on $\Omega$ with the total variation norm. $C_0(\Omega)$ denotes the space
of real- or complex-valued continuous functions on $\Omega$ vanishing
at infinity, equipped with supremum norm. $\Meas(\Omega)$ is identified with
$C_0(\Omega)^\ast$ by \,$\langle \mu,f\rangle=\int\!fd\mu$\,
(Riesz Representation Theorem). When $\Omega=G$
is a locally compact topological group, $\MeasG$ is a Banach algebra
with convolution $\conv$ \;(\cite{Dales}\;p.\,374ff.).\vspace{1mm}

\begin{mthm}   
$Z_t\bigl(\MeasG^{\ast\ast}\bigr)=\MeasG$ holds for every locally compact
group $G$\,.
\end{mthm}

This was the conjecture of Ghahramani-Lau.
It was proved by Lau~\cite{Lau1986cam}
for discrete groups, and by Neufang~\cite{Neu} when
$\kappa(G)\geq 2^{\chi(G)}$ (and in some other cases).

\begin{mcor}	  
$\MeasG$ is always strongly Arens irregular.
\end{mcor}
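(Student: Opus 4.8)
The plan is to deduce strong Arens irregularity from the Main Theorem by exploiting the natural order-reversing symmetry of $\MeasG$. By definition (following \cite{DaLa}), $\MeasG$ is strongly Arens irregular precisely when both topological centres coincide with $\MeasG$, that is, $Z_t^{(1)}(\MeasG^{\ast\ast})=Z_t^{(2)}(\MeasG^{\ast\ast})=\MeasG$. The Main Theorem already supplies the left (first) centre, $Z_t^{(1)}(\MeasG^{\ast\ast})=Z_t(\MeasG^{\ast\ast})=\MeasG$, so it remains only to treat the right (second) centre $Z_t^{(2)}$, and for this I would transport the computation of $Z_t^{(1)}$ through the canonical involution on $\MeasG$.

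First I would introduce the map $J\colon\MeasG\to\MeasG$ obtained by pushing measures forward along the inversion $x\mapsto x^{-1}$ of $G$; concretely $\langle J\mu,f\rangle=\int f(x^{-1})\,d\mu(x)$. Since inversion is a homeomorphism, $J$ is a well-defined isometric linear bijection, and a direct change of variables in the convolution integral shows $J(\mu\conv\nu)=J\nu\conv J\mu$, so that $J$ is an isometric anti-automorphism of the Banach algebra $\MeasG$ with $J^2=\mathrm{id}$. The crucial point is the behaviour of its second adjoint $J^{\ast\ast}\colon\MeasG^{\ast\ast}\to\MeasG^{\ast\ast}$: being the bitranspose of a bounded operator, $J^{\ast\ast}$ is a \wstar homeomorphism extending $J$, and the anti-multiplicativity of $J$ propagates to the biduals in the form
\[
J^{\ast\ast}(\mathfrak{m}\squ\mathfrak{n})=J^{\ast\ast}(\mathfrak{n})\diamond J^{\ast\ast}(\mathfrak{m})
\qquad(\mathfrak{m},\mathfrak{n}\iin\MeasG^{\ast\ast}),
\]
where $\diamond$ denotes the second (right) Arens product. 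This interchange identity is the technical heart of the argument; I would verify it by unwinding the three-step definitions of the two Arens products against that of $J^{\ast\ast}$, tracking carefully how each transpose reverses the relevant pairing.

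Granting the interchange identity, the conclusion is immediate. If $\mathfrak{m}\iin Z_t^{(1)}$, then $\mathfrak{n}\mapsto\mathfrak{m}\squ\mathfrak{n}$ is \wstar continuous; composing with the \wstar homeomorphism $J^{\ast\ast}$ and substituting $\mathfrak{p}=J^{\ast\ast}(\mathfrak{n})$ turns this into the statement that $\mathfrak{p}\mapsto\mathfrak{p}\diamond J^{\ast\ast}(\mathfrak{m})$ is \wstar continuous, i.e.\ $J^{\ast\ast}(\mathfrak{m})\iin Z_t^{(2)}$. Hence $J^{\ast\ast}(Z_t^{(1)})\subseteq Z_t^{(2)}$; the reverse inclusion follows by the same computation applied to the version of the interchange identity with $\squ$ and $\diamond$ exchanged (itself obtained by applying the involution $J^{\ast\ast}$ to the displayed identity), so that $Z_t^{(2)}=J^{\ast\ast}(Z_t^{(1)})$. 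Since $J^{\ast\ast}$ restricts to $J$ on $\MeasG$ and $J$ maps $\MeasG$ onto itself, the Main Theorem yields $Z_t^{(2)}(\MeasG^{\ast\ast})=J^{\ast\ast}(\MeasG)=\MeasG$. Both topological centres therefore reduce to $\MeasG$, which is exactly strong Arens irregularity. The only genuine obstacle is the bookkeeping behind the interchange identity; everything else is formal, and this same involution device is the standard route by which a two-sided statement is extracted from a one-sided topological-centre computation, just as in the $L_1(G)$ case of \cite{LauLosert}.
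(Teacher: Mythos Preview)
Your argument is correct and is essentially the same as the paper's, just unpacked more explicitly: the paper invokes the general identity $Z_t^{(2)}(M^{\ast\ast})=Z_t^{(1)}((M^{\mathrm{op}})^{\ast\ast})$ from \cite{DaLa} together with $\MeasG^{\mathrm{op}}=\Meas(G^{\mathrm{op}})$ and then reapplies the Main Theorem to $G^{\mathrm{op}}$, whereas you realize the same passage concretely via the inversion anti-automorphism $J$ and its bitranspose. Your interchange identity $J^{\ast\ast}(\mathfrak m\squ\mathfrak n)=J^{\ast\ast}(\mathfrak n)\diamond J^{\ast\ast}(\mathfrak m)$ is exactly what underlies that cited fact, so the two proofs differ only in packaging.
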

\Proof
The corresponding result for the right topological centre follows by  using
that $Z_t^{(2)}(M^{**}) = Z_t^{(1)}((M^{\text{op}})^{**})$
(\cite{DaLa}\;p.\,22),
where $M^\text{op}$ denotes the opposite algebra of~$M$. Furthermore,
$\MeasG^\text{op} = \Meas(G^\text{op})$.
\end{proof}
\vspace{1mm plus 1mm}

If $K$ is a closed subgroup (not necessarily normal) of a topological group
$G$, then $G/K$ denotes the space of left cosets with the
quotient topology (\cite{Hewitt1963aha}\;Def.\,5.15).

For set-theoretic notions, e.g. definitions about cardinal numbers,
see \cite{Je} or \cite{Ku} (see also Sections \ref{sec:singularSeparation} and
\ref{sec:strsingularSeparation} for comments about ordinal numbers).
If $\Omega$ is a topological space, the {\it compact covering number} of
$\Omega$, denoted $\kappa(\Omega)$, is the least
cardinal $\tau$ such that $\Omega$ is a union of $\tau$ compact subsets.
$d(\Omega)$, the {\it density character} (shortly: density) is the least
cardinal of a dense subset of $\Omega$\,.
For $\omega\iin\Omega$ define $\chi(\Omega)$ \,({\it local weight} or
character) to be the least cardinal~$\tau$ such that $\omega$ has a base
of neighbourhoods of cardinality~$\tau$ \;(more accurately, one should
write $\chi(\omega,\Omega)$, but if the group of homeomorphisms
acts transitively on $\Omega$
it does not depend on the choice of $\omega$\,;
in particular for $\Omega=G$ or $\Omega=G/K$ which are the cases used below).
When $K$ is compact, the coset space $G/K$ is metrizable
if and only if \,$\chi(G/K)\leq\aleph_0$ \,
(the group case is well known: \cite[8.3]{Hewitt1963aha};
the general case was done by Kristensen: \cite[8.14d]{Hewitt1963aha}\,).

The cardinality of a set $E$ is denoted by $\card E$.
For any locally compact group $G$ and a compact subgroup $K$ of infinite
index, we have
\,$\card{G/K}=\kappa(G)\cdot 2^{\chi(G/K)}$ \,(this can be shown as in
the group case, compare \cite{HuN}\;L.\,5.4). Furthermore, we have
\,$d\bigl(\Meas(G/K)\bigr)=\card{G/K}$
\;(this is trivial when $K$ is open, otherwise it follows from
\cite{HuN}\;Thm.\,5.5).

When $\mu,\nu\iin\Meas(\Omega)$, $\mu\ll\nu$ means that $\mu$ is absolutely
continuous with respect to $\nu$ and
$\mu\perp\nu$ means that $\mu$ and $\nu$ are mutually
singular.
We have $\mu\perp\nu$ if and only if $\abs{\mu}\perp\abs{\nu}$.
When $D,D' \subseteq\Meas(\Omega)$, $D\perp D'$ means that $\mu\perp\nu$
whenever $\mu\iin D$ and $\nu\iin D'$.
If $\Omega'$ is a
locally compact subspace of $\Omega$\,, any measure on $\Omega'$ has a
trivial extension to $\Omega$ (defining it to be zero outside $\Omega'$).
In this way, $\Meas(\Omega')$ will be considered as a subspace of
$\Meas(\Omega)$  \;(if $\Omega'$ is closed, this embedding is just the dual
of the restriction
operator $C_0(\Omega)\to C_0(\Omega')$\,). In particular, if $H$ is a closed
subgroup of $G$, we consider $\Meas(H)$ as a (\wstar closed) subalgebra of
$\MeasG$\,.

$\aMeasG$ denotes the subspace of those measures in $\MeasG$ that
are absolutely continuous
with respect to some (left or right) Haar measure
$\lambda_G$ ($G$ a locally compact group). Fixing $\lambda_G$\,, this will be
identified in the usual way with the space $L^1(G)$ defined by $\lambda_G$\,.
$\sMeasG$ is the subspace of the measures $\mu\iin\MeasG$
such that $\mu\perp\lambda_G$\,;
equivalently, $\mu\perp\nu$ for every $\nu\iin\aMeasG$ \,
(this notation differs from that in~\cite{Hewitt1963aha}\,(19.13),
where $\sMeasG$ stands for the space of singular \emph{continuous} measures).
$\delta_x$ ($\in\!\MeasG$\,) denotes the point mass at $x\iin G$\,.

If $\nu\iin \MeasG$ and $h\iin C_0(G)$,
then (using the embedding of $C_0(G)$ into $\MeasG^\ast=C_0(G)^{\ast\ast}$)
we have \,$\nu\odot h\in C_0(G)$, \,defining a left action of $\MeasG$ on
$C_0(G)$ \,(\cite{Dales}\linebreak p.\,376ff., where this is denoted as
\,$\nu\cdot h$\,). By \thetag{\#},
\,$\langle\nu\odot h,\,\mu\,\rangle =\langle\, h\,,\mu\conv\nu\rangle$\,,
in particular
\,$\delta_x\!\odot h\,(y)=h(yx)$ \,for $x,y\in G$ \,(right translation of
functions).

If $K$ is a compact subgroup of $G$\,, $\lambda_K\iin\Meas(K)\subseteq\MeasG$
shall always be its normalized Haar measure (i.e., $\lambda_K(K)=1$).
Recall that $\lambda_K\star\lambda_K=\lambda_K$ and for those readers who are
more acquainted to classical convolution, we mention that (using invariance
of $\lambda_K$ under the group inversion) one has
$\lambda_K\odot h=h\star\lambda_K$ \,(but, in principle, all our results
about $\odot$ can be proved using duality arguments). If $K$ is normal in
$G$\,, then $\lambda_K$ is central in $\MeasG$ (and conversely).

\begin{lemma}\label{Identify}
For $G$ a locally compact group and a compact subgroup $K$ of $G$
put $\Meas(G,K)=\{\mu\in\MeasG :\,\mu=\mu\star\lambda_K\}=
\MeasG\star \lambda_K$\,.
Then the canonical mapping $\pi\!:G\to G/K$ induces an
isometric isomorphism between $\Meas(G,K)$ and $\Meas(G/K)$.
\end{lemma}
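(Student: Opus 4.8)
The plan is to realize the isomorphism concretely as the push-forward $\pi_\ast$ of measures under $\pi$, and to prove it is a linear isometric bijection of $\Meas(G,K)$ onto $\Meas(G/K)$ by exploiting the adjoint relationship between $\pi_\ast$ and the projection $P\colon\mu\mapsto\mu\star\lambda_K$. First I would record the two elementary reductions. The equality $\{\mu:\mu=\mu\star\lambda_K\}=\MeasG\star\lambda_K$ follows at once from $\lambda_K\star\lambda_K=\lambda_K$: any fixed point lies in the range, and conversely $(\nu\star\lambda_K)\star\lambda_K=\nu\star\lambda_K$, so $P$ is an idempotent of norm one whose range equals its fixed-point set $\Meas(G,K)$, a closed subspace. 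On the topological side, since $K$ is compact the map $\pi$ is a continuous, open, proper surjection, so $\pi^\ast\colon f\mapsto f\circ\pi$ is an isometric linear embedding of $C_0(G/K)$ into $C_0(G)$ whose range is exactly the space $C_0(G)^K$ of functions that are constant on the left cosets $xK$ (equivalently, invariant under right translation by $K$). Its Banach-space adjoint is the push-forward $\pi_\ast\colon\MeasG\to\Meas(G/K)$, determined by $\langle f,\pi_\ast\mu\rangle=\langle f\circ\pi,\mu\rangle$; by Hahn--Banach, $\pi_\ast$ maps the unit ball of $\MeasG$ onto that of $\Meas(G/K)$.

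The computational heart is the identity $\lambda_K\odot h\,(y)=\int_K h(yk)\,d\lambda_K(k)$, which follows from $\delta_x\odot h\,(y)=h(yx)$ and shows that $\lambda_K\odot h$ is the average of $h$ over the coset $yK$. Three consequences I would extract are: $\norm{\lambda_K\odot h}_\infty\le\norm{h}_\infty$; the function $\lambda_K\odot h$ lies in $C_0(G)^K$; and $\lambda_K\odot g=g$ whenever $g\in C_0(G)^K$ (averaging a coset-constant function changes nothing). Combined with $\langle h,\mu\star\lambda_K\rangle=\langle\lambda_K\odot h,\mu\rangle$ from \thetag{\#}, this yields the key relation $\langle h,\mu\rangle=\langle\lambda_K\odot h,\mu\rangle$, valid for every $\mu\in\Meas(G,K)$ and $h\in C_0(G)$.

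I would then verify the three properties of $\pi_\ast$ restricted to $\Meas(G,K)$. For injectivity, if $\mu\in\Meas(G,K)$ and $\pi_\ast\mu=0$, then $\mu$ annihilates $C_0(G)^K$; taking $g=\lambda_K\odot h$ in the key relation gives $\langle h,\mu\rangle=\langle\lambda_K\odot h,\mu\rangle=0$ for all $h\in C_0(G)$, so $\mu=0$. For surjectivity, given $\sigma\in\Meas(G/K)$ lift it to some $\nu\in\MeasG$ with $\pi_\ast\nu=\sigma$ and set $\mu=\nu\star\lambda_K\in\Meas(G,K)$; since $\lambda_K\odot(f\circ\pi)=f\circ\pi$ one checks $\pi_\ast(\nu\star\lambda_K)=\pi_\ast\nu=\sigma$, so $\sigma$ is attained inside $\Meas(G,K)$. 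For the isometry, $\norm{\pi_\ast\mu}\le\norm{\mu}$ always holds; conversely, for $\mu\in\Meas(G,K)$ and $\norm{h}_\infty\le1$ write $\lambda_K\odot h=g\circ\pi$ with $\norm{g}_\infty=\norm{\lambda_K\odot h}_\infty\le1$, so $\abs{\langle h,\mu\rangle}=\abs{\langle g,\pi_\ast\mu\rangle}\le\norm{\pi_\ast\mu}$, and taking the supremum over $h$ gives $\norm{\mu}\le\norm{\pi_\ast\mu}$.

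The main obstacle I anticipate is not any single estimate but the bookkeeping of the duality: keeping the adjoint identification $\pi_\ast=(\pi^\ast)^\ast$ straight, verifying carefully that $\pi$ is proper and open so that $\pi^\ast$ really is an isometry onto $C_0(G)^K$, and confirming that $\lambda_K\odot h$ genuinely represents the averaging projection dual to $P$. Once the identity $\langle h,\mu\rangle=\langle\lambda_K\odot h,\mu\rangle$ on $\Meas(G,K)$ is in place, injectivity, surjectivity and the isometry all fall out of it uniformly.
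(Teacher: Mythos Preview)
Your proposal is correct and follows essentially the same route as the paper: both realize the isomorphism as the image (push-forward) measure $\mu\mapsto\dot\mu=\pi_\ast\mu$, identify it as the adjoint of the isometric embedding $C_0(G/K)\hookrightarrow C_0(G)$ onto the right $K$-periodic functions, and use the averaging identity $\lambda_K\odot h(y)=\int_K h(yk)\,d\lambda_K(k)$ together with $(\mu\star\lambda_K)^{\textstyle\cdot}=\dot\mu$ to pass between the two sides. The paper's write-up is terser, concluding isometry and surjectivity directly ``from the properties at the $C_0$-level'', while you unpack injectivity, surjectivity and the norm equality separately via the key relation $\langle h,\mu\rangle=\langle\lambda_K\odot h,\mu\rangle$; but the underlying mechanism is the same.
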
\noindent
Before giving the proof, we state some conventions that will be important
throughout the paper.
Using this isomorphism as identification, $\Meas(G/K)$ will be considered as a
\wstar closed
subspace (left ideal) of $\MeasG$ (this corresponds to the attitude of
\cite[22.6]{Dieud}). Note that if $x\iin G$ normalizes $K$
(i.e., $xKx^{-1}=K$), we have\linebreak $\Meas(G/K)\conv \delta_x=\Meas(G/K)$.
If $K$ is normal, the multiplication on $\Meas(G/K)$ inherited from $\MeasG$
corresponds to
convolution defined by the quotient group $G/K$\,. Similarly, $C_0(G/K)$
will be identified with $\lambda_K\odot C_0(G)$
\,(right $K$-periodic functions in $C_0(G)$\,). One can use this also to define
the subspaces $\aMeas(G,K)=\aMeasG\cap\Meas(G/K)$ and
$\sMeas(G,K)=\sMeasG\cap\Meas(G/K)$. Alternatively, there exists a left
$G$-invariant measure $\lambda_{G/K}$ on $G/K$\, (e.g. by
\cite[Thm.\,15.24]{Hewitt1963aha}) and
$\aMeas(G,K)$ (resp. $\sMeas(G,K)$\,) consist of the measures
$\mu\in\Meas(G/K)$ with $\mu\ll\lambda_{G/K}$ (resp. $\mu\perp\lambda_{G/K}$).

\Proof
This is contained in the proof of Thm.\,8.1B of \cite{Jew} (in a different
notation). We include a direct argument. $f\mapsto f\circ\pi$ defines
an isometric linear mapping $C_0(G/K)\to C_0(G)$\,. Its image consists of
all right $K$-periodic functions in $C_0(G)$ \,(i.e., $h=f\circ\pi$ satisfies
$h(yx)=h(y)$ for all
$x\iin K,\,y\iin G$). For general $h\iin C_0(G)$ we have
$\lambda_K\odot h(y)=\langle\lambda_K\odot h,\,\delta_y\,\rangle =
\int\! h(yx)\,d\lambda_K(x)$. It follows easily that right periodicity of $h$
is equivalent to \,$h=\lambda_K\odot h$\,. Since $\lambda_K$ is idempotent,
we can see that the image of the embedding coincides with
$\lambda_K\odot C_0(G)$.

For $\mu\in\MeasG$ the {\it image measure} $\dot\mu\in\Meas(G/K)$ is defined
by
$\dot\mu(B)=\mu(\pi^{-1}(B))$. Easy computations show that $\mu\mapsto\dot\mu$
is just the dual mapping of the embedding of $C_0$ described above and that
$(\mu\star\lambda_K)^{\textstyle\cdot}=\dot\mu$\,. Finally, one can conclude
(from the properties
at the $C_0$-level) that the restriction to $\Meas(G,K)$ is isometric and
surjective. Alternatively: \ the inverse mapping from $\Meas(G/K)$ to
$\Meas(G,K)$ is a special case of the mapping $m\mapsto m^\sharp$
investigated in \cite[Chap.\,7, \S\,2]{Bourb} \,(the dual mapping of
\cite[Thm.\,15.21]{Hewitt1963aha}).
\end{proof}\noindent
The following notion will provide an essential tool for our argument.
\begin{definition}		   
Let $\tau$ be a cardinal.
Say that a measure $\mu\iin\MeasG$ is \emph{$\tau$-thin}
if there is a set $P\subseteq G$
such that \,$\card{P}=\tau$ and
\,$\mu\conv\delta_p\perp\mu\conv\delta_{p'}$ for all
$p,p'\iin P$ with $p\neq p'$.
\end{definition}

The basic observation used in \cite{Neu} was that $Z_t(M^{\ast\ast})$ must
be small whenever there exists $h\iin M^\ast$ such that
$\unitball (M^{\ast\ast})\odot h$ is big. A general version is worked out
in Section \ref{sec:dirsums}, see the
Lemmas \ref{lemma:centresubsp} and \ref{lemma:centresubsp2} for precise
conditions. This motivates the search for factorization theorems. In
the case of $\MeasG$ it has turned out to be very difficult to get
information about the behaviour of $\mathfrak n\odot h$ for general
$\mathfrak n\iin M^{\ast\ast}$. Thus we have restricted to
$\mathfrak n=\delta_x$
($x\iin G$) and limits $\mathfrak n\in\cl{\delta(G)}$\; (we write
\,$\delta(G) = \{ \delta_x :\, x\iin G \}$, \linebreak
$\overline{\phantom{tt}}$ \,denotes
the \wstar closure in the bidual). For this case, factorizations are
constructed in Section~\ref{sec:factorization}, using
thinness of measures (Theorems \ref{th:factorization} and
\ref{th:factorizationsub}).

It is almost immediate that if $\kappa(G)$ is uncountable, any measure
$\mu\iin\MeasG$ is\linebreak
$\kappa(G)$\,-thin (recall that the support of $\mu$ is always contained in
a $\sigma$-compact subgroup) and this was used in \cite{Neu} to prove the
conjecture in the case where $\kappa(G)\geq 2^{\chi(G)}$.
But this approach is insufficient when $\kappa(G)$ is small (in particular
for compact~$G$). In Section \ref{sec:singularSeparation}, we consider the
case of singular measures, where stronger conclusions are possible. Extending
a technique of \cite{Prokaj2003csm} to general locally compact groups,
we will show (Theorem \ref{th:perfectset}) that if
$G$ is non-discrete, any
$\mu\iin\sMeasG$ is $2^{\aleph_0}\kappa(G)$\,-thin. Using this, we can prove
in Section \ref{sec:caseI}
the Main Theorem for metrizable groups (and also for all $G$ with
$\card{G}\leq 2^{\aleph_0}\kappa(G)$\,), see Theorem \ref{th:caseI}.
To make this important case more easily accessible, we provide some ``light"
versions of the preliminary results (Corollaries \ref{cor:centresum} and
\ref{cor:thincentre}).

For the proof of the Main Theorem in the general case,
the basic idea is to consider subspaces $\Meas(G/K)$ \,($K$ a compact
subgroup) and to use induction on $\chi(G/K)$. In Section \ref{sec:compsub}
we collect first some results on the behaviour of $\chi(G/K)$ and
the possibilities to replace $K$ by a normal subgroup. In the non-metrizable
case a further refinement of the decomposition of
$\MeasG$ is introduced (Theorem \ref{th:measureclasses}). Instead of singular
measures we consider ``strongly singular measures". In
Section~\ref{sec:strsingularSeparation} we show that any
\,$\mu\in\ssMeas(G,K)$ is $\card{G/K}$\,-thin (Corollary \ref{cor:thinss}).
Then in Section~\ref{sec:caseII} the final work is done, proving
Theorem \ref{th:caseII} which contains our Main Theorem.
\vspace*{3mm plus 6mm}

\section{Subspaces and Direct sums}\vspace{1mm}
    \label{sec:dirsums}

Let $M$ be a Banach space, $M_1$ a closed subspace.
$M_1^\circ\subseteq M^\ast$
shall denote the annihilator of $M_1$ (the continuous functionals vanishing
on $M_1$).
Recall that $M_1^\circ$ can be identified with the dual space $(M/M_1)^\ast$,
the \wstar topology being just the induced topology from $M^\ast$. The bidual
$M_1^{\ast\ast}$ can be identified with a subspace of $M^{\ast\ast}$ (using
the second dual of the inclusion mapping) and this is compatible with the
embedding
of $M_1$ into its bidual. Then, $M_1^{\ast\ast}$ is just the \wstar closure
of $M_1$ in $M^{\ast\ast}$ and $M_1^{\ast\ast}=M_1^{\circ\circ}$ (where the
second annihilator refers to $M^{\ast\ast}$).

\begin{lemma}
    \label{lemma:wcontinuity}
Let $M$ be a Banach algebra, $M_1$ a closed subspace.
\item[(1)]
If \,$\mu\iin\unitball (M)$ is such that $M_1\conv\mu\subseteq M_1$\,, then \
$\mu \odot \unitball (M_1^\circ) \subseteq \unitball (M_1^\circ)$ \ and \
$\unitball (M_1^{\ast\ast})\squ\mu\subseteq\unitball (M_1^{\ast\ast})$.
\item[(2)]
For every \,$h\iin M^\ast$
the mapping $\mathfrak m\mapsto\mathfrak m\odot h$
from $M^{\ast\ast}$ to $M^\ast$
is continuous for the \wstar topologies on $M^{\ast\ast}$ and $M^\ast$.
\end{lemma}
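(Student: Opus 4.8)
The plan is to verify everything by unwinding the three defining adjoint relations for the Arens operations, combining them with the identification $M_1^{\ast\ast}=M_1^{\circ\circ}$ recalled above and with the standard contractivity of the operations $\cdot$, $\odot$ and $\squ$ on the relevant unit balls. No deep input is needed here; the one point requiring care is that the second assertion of (1) is bootstrapped from the first, and that one must consistently invoke the embedding relation $\langle\mu,f\rangle=\langle f,\mu\rangle$ whenever an element $\mu\iin M$ is regarded inside $M^{\ast\ast}$.

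I would dispose of (2) first, since it is purely formal. Fix $h\iin M^\ast$ and consider the bounded linear map $T_h\colon M\to M^\ast$, $\mu\mapsto h\cdot\mu$. Its Banach-space adjoint is an operator $(M^\ast)^\ast=M^{\ast\ast}\to M^\ast$, and the defining relation $\langle\mathfrak m\odot h,\mu\rangle=\langle\mathfrak m,h\cdot\mu\rangle=\langle\mathfrak m,T_h\mu\rangle$ identifies this adjoint with the map $\mathfrak m\mapsto\mathfrak m\odot h$. Since the adjoint of any bounded operator is continuous for the weak$^\ast$ topologies, (2) follows at once. Concretely, if $\mathfrak m_\alpha\to\mathfrak m$ in $\sigma(M^{\ast\ast},M^\ast)$ then $\langle\mathfrak m_\alpha\odot h,\mu\rangle=\langle\mathfrak m_\alpha,h\cdot\mu\rangle\to\langle\mathfrak m,h\cdot\mu\rangle=\langle\mathfrak m\odot h,\mu\rangle$ for every $\mu\iin M$, which is exactly weak$^\ast$ convergence in $M^\ast$.

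For the first assertion of (1), I would take $h\iin\unitball (M_1^\circ)$ and test $\mu\odot h$ against an arbitrary $\nu\iin M_1$: using the embedding of $\mu$ into $M^{\ast\ast}$,
\[
\langle\mu\odot h,\,\nu\,\rangle=\langle\mu,\,h\cdot\nu\,\rangle=\langle h\cdot\nu,\,\mu\,\rangle=\langle h,\,\nu\conv\mu\,\rangle .
\]
By hypothesis $\nu\conv\mu\iin M_1\conv\mu\subseteq M_1$, and $h$ annihilates $M_1$, so this quantity is $0$; as $\nu\iin M_1$ was arbitrary, $\mu\odot h\iin M_1^\circ$. The bound $\norm{\mu\odot h}\le\norm{\mu}\,\norm{h}\le1$ is just contractivity of $\odot$, so $\mu\odot h\iin\unitball (M_1^\circ)$. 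For the second assertion I would use $M_1^{\ast\ast}=M_1^{\circ\circ}$: given $\mathfrak m\iin\unitball (M_1^{\ast\ast})$ and any $h\iin M_1^\circ$, the relation $\langle\mathfrak m\squ\mu,h\rangle=\langle\mathfrak m,\mu\odot h\rangle$, combined with the first assertion (which gives $\mu\odot h\iin M_1^\circ$) and the fact that $\mathfrak m$ annihilates $M_1^\circ$, forces $\langle\mathfrak m\squ\mu,h\rangle=0$. Hence $\mathfrak m\squ\mu\iin M_1^{\circ\circ}=M_1^{\ast\ast}$, and $\norm{\mathfrak m\squ\mu}\le\norm{\mathfrak m}\,\norm{\mu}\le1$ by contractivity of $\squ$, completing (1). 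The only real ``obstacle'' here is organisational — keeping the two parts of (1) in the correct logical order and tracking the annihilator identifications — rather than mathematical.
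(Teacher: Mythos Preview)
Your proof is correct and takes exactly the approach the paper intends: the paper's own proof is the single sentence ``This follows from the definitions of $\odot$ and $\squ$,'' and you have simply written out the straightforward verification in full.
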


\Proof
This follows from the definitions of $\odot$ and $\squ$\,.
\end{proof}

\begin{lemma}
    \label{lemma:centresubsp}
Let $M$ be a Banach algebra, $M_1$ a closed subspace, and assume
that there exists \,$h\iin M^\ast$ such that
\,$\unitball (M^{\ast\ast})\odot h\supseteq\unitball (M_1^\circ)$.
Then  \,$Z_t(M^{\ast\ast})\subseteq M+M_1^{\ast\ast}$.
\end{lemma}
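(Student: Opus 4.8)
The plan is to show that every \,$\mathfrak{m}\iin Z_t(M^{\ast\ast})$ differs, by an element of $M$, from something that annihilates $M_1^\circ$. Since $M_1^{\ast\ast}=M_1^{\circ\circ}$ is precisely the set of functionals in $M^{\ast\ast}$ vanishing on $M_1^\circ$, this yields $\mathfrak{m}\iin M+M_1^{\ast\ast}$. So fix $\mathfrak{m}\iin Z_t(M^{\ast\ast})$. The first step exploits the centrality hypothesis: by definition the map $\mathfrak{n}\mapsto\mathfrak{m}\squ\mathfrak{n}$ is \wstar continuous on $M^{\ast\ast}$, so composing it with evaluation at the fixed $h\iin M^\ast$ (which is \wstar continuous on $M^{\ast\ast}$ since $h\iin M^\ast$) shows that the linear functional $\mathfrak{n}\mapsto\langle\mathfrak{m}\squ\mathfrak{n},h\rangle=\langle\mathfrak{m},\mathfrak{n}\odot h\rangle$ is \wstar continuous. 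A \wstar continuous functional on $M^{\ast\ast}$ is given by an element of $M^\ast$, so there is $f\iin M^\ast$ with
\begin{equation*}
\langle\mathfrak{m},\mathfrak{n}\odot h\rangle=\langle\mathfrak{n},f\rangle
\quad\text{for all }\mathfrak{n}\iin M^{\ast\ast}.\tag{$\ast$}
\end{equation*}

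The heart of the argument is to show that the restriction of $\mathfrak{m}$ to $M_1^\circ$ is \wstar continuous on the unit ball $\unitball (M_1^\circ)$, and here I would feed in the hypothesis $\unitball (M^{\ast\ast})\odot h\supseteq\unitball (M_1^\circ)$ to factor each ball element through $h$. Concretely, take a net $g_\alpha\to g$ \wstar with $g_\alpha,g\iin\unitball (M_1^\circ)$; by hypothesis choose $\mathfrak{n}_\alpha\iin\unitball (M^{\ast\ast})$ with $\mathfrak{n}_\alpha\odot h=g_\alpha$, so that $(\ast)$ gives $\langle\mathfrak{m},g_\alpha\rangle=\langle\mathfrak{n}_\alpha,f\rangle$. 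Since the $\mathfrak{n}_\alpha$ are in no way canonical, I would extract continuity through \wstar compactness: by Banach--Alaoglu any subnet of $(\mathfrak{n}_\alpha)$ has a further subnet converging \wstar to some $\mathfrak{n}\iin\unitball (M^{\ast\ast})$, and Lemma \ref{lemma:wcontinuity}(2) forces $\mathfrak{n}\odot h=\lim g_\alpha=g$; then $(\ast)$ yields $\langle\mathfrak{n}_\alpha,f\rangle\to\langle\mathfrak{n},f\rangle=\langle\mathfrak{m},g\rangle$ along that subnet. As every subnet of the scalar net $\langle\mathfrak{m},g_\alpha\rangle=\langle\mathfrak{n}_\alpha,f\rangle$ thus has a further subnet tending to the single value $\langle\mathfrak{m},g\rangle$, the whole net converges to it, giving \wstar continuity of $g\mapsto\langle\mathfrak{m},g\rangle$ on $\unitball (M_1^\circ)$.

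Finally, $M_1^\circ$ is a dual space --- isometrically $(M/M_1)^\ast$, with \wstar topology induced from $M^\ast$ --- and $\mathfrak{m}|_{M_1^\circ}$ is bounded; being \wstar continuous on the unit ball, it is \wstar continuous by the Krein--\v{S}mulian theorem, hence represented by some $w\iin M/M_1$. Lifting $w$ to $\nu\iin M$ gives $\langle\mathfrak{m},g\rangle=\langle g,\nu\rangle$ for all $g\iin M_1^\circ$ (as such $g$ vanishes on $M_1$, the value depends only on $\nu+M_1=w$), i.e.\ $\mathfrak{m}-\nu\iin M_1^{\circ\circ}=M_1^{\ast\ast}$, whence $\mathfrak{m}\iin M+M_1^{\ast\ast}$. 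I expect the main obstacle to be the middle step: the factorization hypothesis supplies preimages $\mathfrak{n}_\alpha$ with no control beyond the norm bound, so the required continuity cannot be read off directly and must be recovered indirectly via \wstar compactness and a subnet argument; one must also be careful that the Krein--\v{S}mulian upgrade from ``continuous on the ball'' to ``\wstar continuous'' is applied to $M_1^\circ$ in its own duality $(M/M_1)^\ast$.
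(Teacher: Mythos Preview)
Your proof is correct and follows essentially the same strategy as the paper: show that $\mathfrak{m}|_{M_1^\circ}$ is \wstar continuous on the unit ball via the factorization hypothesis and the centrality of $\mathfrak{m}$, then apply Krein--\v{S}mulian and conclude $\mathfrak{m}-\nu\in M_1^{\circ\circ}=M_1^{\ast\ast}$. The only stylistic difference is in the middle step: the paper packages your subnet argument as the abstract statement that the quotient topology $\tau_1$ on $\unitball(M^{\ast\ast})\odot h$ (pushed forward from the \wstar topology via $\mathfrak{n}\mapsto\mathfrak{n}\odot h$) coincides with the induced \wstar topology from $M^\ast$, which is just the ``continuous image of compact is compact, hence homeomorphism onto its image'' principle --- exactly what your Banach--Alaoglu plus subnet extraction proves. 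Your explicit representation of $\psi$ by an $f\in M^\ast$ is a harmless extra step that the paper omits.
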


\Proof
Take \,$\mathfrak m\in Z_t(M^{\ast\ast})$. The idea is to show that the
assumptions of the lemma imply \wstar continuity of the restriction of
$\mathfrak{m}$ to $M_1^\circ$. Then (recall that the isomorphism of
$M_1^\circ$ and $(M/M_1)^\ast$ respects the \wstar topologies) there exists
$\mu\iin M$ such that $\langle\mathfrak m,u\rangle=\langle u,\mu\rangle$
holds for all $u\iin M_1^\circ$. Consequently, \,$\mathfrak m-\mu\in
M_1^{\circ\circ}=M_1^{\ast\ast}$.

To show \wstar continuity, we consider the functional
\,$\psi\!:\mathfrak n\mapsto \langle\mathfrak m,\mathfrak n\odot h\rangle$
on $M^{\ast\ast}$. \linebreak
$\psi$ is \wstar continuous, since
$\langle\mathfrak m,\mathfrak n\odot h\rangle=
\langle\mathfrak m\squ\mathfrak n,h\rangle$ and
$\mathfrak m\in Z_t(M^{\ast\ast})$.
Let $\tau_1$ be the quotient topology on
$\unitball (M^{\ast\ast})\odot h$ obtained from the \wstar topology
on $\unitball (M^{\ast\ast})$ by the mapping
$\mathfrak{n}\mapsto\mathfrak{n}\odot h$. Weak$^\ast$ continuity of $\psi$
implies that $\mathfrak m$ is
$\tau_1$-continuous. Using compactness and part (2) of
Lemma~\ref{lemma:wcontinuity} one can see that $\tau_1$ coincides with
the topology on $\unitball (M^{\ast\ast})\odot h$ induced by the
\wstar topology of $M^\ast$. By our assumption on $h$, this implies that
\,$\mathfrak m\vert\,\unitball (M_1^\circ)$ is \wstar continuous.
Then, by the Krein-\v Smulian (or Banach-Dieudonn\'e) theorem,
\,$\mathfrak{m}\vert M_1^\circ$ is \wstar continuous. 
\end{proof}

\begin{corollary}
    \label{cor:centresum}
Assume that $M=M_0\oplus M_1$ is the topological direct sum of closed
subspaces and $M_1$ satisfies the assumption of Lemma \ref{lemma:centresubsp}.
Then  \,$Z_t(M^{\ast\ast})\subseteq M_0\oplus M_1^{\ast\ast}$.
\end{corollary}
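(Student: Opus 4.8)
The plan is to derive Corollary~\ref{cor:centresum} directly from Lemma~\ref{lemma:centresubsp} by exploiting the direct-sum structure.The plan is to derive Corollary~\ref{cor:centresum} directly from Lemma~\ref{lemma:centresubsp}, the only additional work being a bookkeeping argument about direct sums in the bidual. The hypothesis that $M_1$ satisfies the assumption of Lemma~\ref{lemma:centresubsp} is, by definition, the existence of an $h\iin M^\ast$ with $\unitball (M^{\ast\ast})\odot h\supseteq\unitball (M_1^\circ)$. First I would simply invoke Lemma~\ref{lemma:centresubsp} to obtain $Z_t(M^{\ast\ast})\subseteq M+M_1^{\ast\ast}$. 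The whole task then reduces to sharpening the right-hand side from $M+M_1^{\ast\ast}$ to the \emph{direct} sum $M_0\oplus M_1^{\ast\ast}$.

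Next I would rewrite the containing space. Since $M=M_0\oplus M_1$ and $M_1\subseteq M_1^{\ast\ast}$ (recall that $M_1^{\ast\ast}$ is the \wstar closure of $M_1$ in $M^{\ast\ast}$), we have $M+M_1^{\ast\ast}=M_0+M_1+M_1^{\ast\ast}=M_0+M_1^{\ast\ast}$, so $Z_t(M^{\ast\ast})\subseteq M_0+M_1^{\ast\ast}$. It remains to check that this sum is direct, i.e. $M_0\cap M_1^{\ast\ast}=\{0\}$. Let $P\!:M\to M_0$ be the continuous projection with kernel $M_1$ supplied by the topological direct sum. For any $g\iin M_0^\ast$ the functional $g\circ P\iin M^\ast$ vanishes on $M_1$, hence $g\circ P\iin M_1^\circ$. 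Now take $\mu_0\iin M_0\cap M_1^{\ast\ast}$; since $M_1^{\ast\ast}=M_1^{\circ\circ}$, the element $\mu_0$ annihilates $M_1^\circ$, so $0=\langle\mu_0,\,g\circ P\rangle=g(P\mu_0)=g(\mu_0)$, using $P\mu_0=\mu_0$. As $g\iin M_0^\ast$ was arbitrary and $M_0^\ast$ separates the points of $M_0$ (Hahn--Banach), this forces $\mu_0=0$. Thus $M_0\cap M_1^{\ast\ast}=\{0\}$ and the sum $M_0\oplus M_1^{\ast\ast}$ is genuinely direct, giving $Z_t(M^{\ast\ast})\subseteq M_0\oplus M_1^{\ast\ast}$.

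For the topological content of the symbol $\oplus$, I would note that $P^{\ast\ast}\!:M^{\ast\ast}\to M^{\ast\ast}$ is a \wstar continuous projection whose range is $M_0^{\ast\ast}$ and whose complementary projection has range $M_1^{\ast\ast}$, so that $M^{\ast\ast}=M_0^{\ast\ast}\oplus M_1^{\ast\ast}$ is already a topological direct sum; restricting to $M_0+M_1^{\ast\ast}$ shows that $M_0\oplus M_1^{\ast\ast}$ is a topological direct sum as well (and reconfirms $M_0\cap M_1^{\ast\ast}\subseteq M_0^{\ast\ast}\cap M_1^{\ast\ast}=\{0\}$). There is no serious obstacle here: the entire substance is carried by Lemma~\ref{lemma:centresubsp}, and the only point that needs genuine care is the directness of $M_0+M_1^{\ast\ast}$, for which the projection argument above is the cleanest route.
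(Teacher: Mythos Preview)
Your proof is correct and follows the same route the paper intends: the paper treats the corollary as immediate from Lemma~\ref{lemma:centresubsp}, adding only the remark that $M/M_1\cong M_0$ (hence $M_1^\circ\cong M_0^\ast$), which is precisely the bookkeeping you spell out in detail. Your elaboration of the directness of $M_0+M_1^{\ast\ast}$ via the projection $P$ (and its bitranspose $P^{\ast\ast}$) is a clean way to justify what the paper leaves implicit.
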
\noindent
We have $M/M_1\cong M_0$ (hence $M_1^\circ\cong M_0^\ast$). Here $\cong$
shall mean that these
are isomorphic Banach spaces, but not necessarily isometric.
\vspace{1mm plus 1mm}

For the proof of our main theorem in the non-metrizable case, we need another
variation of Lemma \ref{lemma:centresubsp}. If $M_2$ is a closed subspace
of $M$, \,$p_2\!:M^\ast\to M^\ast/M_2^\circ$ denotes the canonical projection.
When $M^\ast/M_2^\circ$ is identified with $M_2^\ast$, this is the dual map
of the inclusion $M_2\to M$ (hence \wstar continuous). It assigns to
a functional $h\iin M^\ast$ its restriction $h\vert M_2$ to $M_2$.

\begin{lemma}
    \label{lemma:centresubsp2}
Let $M$ be a Banach algebra, $M_1\!\subseteq M_2$ closed subspaces of $M$ and
assume that there exists \,$h\iin M^\ast$ such that
\,$p_2\bigl(\unitball (M^{\ast\ast})\odot h\bigr)\supseteq
p_2\bigl(\unitball (M_1^\circ)\bigr)$.
Then  $Z_t(M^{\ast\ast})\cap M_2^{\ast\ast}\subseteq M_2+ M_1^{\ast\ast}$\,.
\end{lemma}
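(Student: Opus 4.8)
The plan is to imitate the proof of Lemma~\ref{lemma:centresubsp}, but carried out one level down, inside $M_2^\ast$ rather than $M^\ast$, with $p_2$ mediating between the two levels (note that $M_2$ need not be a subalgebra, so Lemma~\ref{lemma:centresubsp} cannot simply be invoked for it). Take $\mathfrak m\iin Z_t(M^{\ast\ast})\cap M_2^{\ast\ast}$. Since $M_2^{\ast\ast}=M_2^{\circ\circ}$, the functional $\mathfrak m$ vanishes on $M_2^\circ=\ker p_2$, so it factors as $\mathfrak m=\bar{\mathfrak m}\circ p_2$ for a unique $\bar{\mathfrak m}\iin(M^\ast/M_2^\circ)^\ast=(M_2^\ast)^\ast$; this $\bar{\mathfrak m}$ is just $\mathfrak m$ viewed in $M_2^{\ast\ast}$ under the identifications of this section. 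Observe that $p_2(M_1^\circ)$ is precisely the annihilator of $M_1$ inside $M_2^\ast$, isometrically $(M_2/M_1)^\ast$, and, exactly as for $M_1^\circ\subseteq M^\ast$, its \wstar topology is the one induced from $\sigma(M_2^\ast,M_2)$. The goal is to show that $\bar{\mathfrak m}$ restricted to $p_2(M_1^\circ)$ is \wstar continuous; granting this, $\bar{\mathfrak m}\vert\,p_2(M_1^\circ)$ is evaluation against some $\mu\iin M_2$, whence $\mathfrak m-\mu$ annihilates $p_2(M_1^\circ)$ and therefore lies in its second annihilator in $M_2^{\ast\ast}$, which is $M_1^{\ast\ast}$. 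This yields $\mathfrak m\iin M_2+M_1^{\ast\ast}$, as required.

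To prove the \wstar continuity, consider again $\psi\!:\mathfrak n\mapsto\langle\mathfrak m,\mathfrak n\odot h\rangle$ on $M^{\ast\ast}$. As in Lemma~\ref{lemma:centresubsp}, $\psi$ is \wstar continuous, because $\langle\mathfrak m,\mathfrak n\odot h\rangle=\langle\mathfrak m\squ\mathfrak n,h\rangle$ and $\mathfrak m\iin Z_t(M^{\ast\ast})$. Using $\mathfrak m=\bar{\mathfrak m}\circ p_2$ we rewrite $\psi(\mathfrak n)=\langle\bar{\mathfrak m},p_2(\mathfrak n\odot h)\rangle$, so $\psi=\bar{\mathfrak m}\circ\Phi$ where $\Phi\!:\mathfrak n\mapsto p_2(\mathfrak n\odot h)$. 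By Lemma~\ref{lemma:wcontinuity}(2) the map $\mathfrak n\mapsto\mathfrak n\odot h$ is \wstar-to-\wstar continuous, and $p_2$ is \wstar continuous as a dual map; hence $\Phi$ carries $\unitball(M^{\ast\ast})$ continuously (from the \wstar topology to $\sigma(M_2^\ast,M_2)$) onto $S:=p_2\bigl(\unitball(M^{\ast\ast})\odot h\bigr)$. Since $\unitball(M^{\ast\ast})$ is \wstar compact, $S$ is $\sigma(M_2^\ast,M_2)$-compact and $\Phi$ is a quotient map onto $S$; as $\psi=\bar{\mathfrak m}\circ\Phi$ is continuous, $\bar{\mathfrak m}\vert S$ is $\sigma(M_2^\ast,M_2)$-continuous.

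It remains to pass from $S$ to all of $p_2(M_1^\circ)$. By hypothesis $S\supseteq p_2\bigl(\unitball(M_1^\circ)\bigr)$. Since the inclusion $M_2/M_1\hookrightarrow M/M_1$ is isometric, its dual $p_2\!:M_1^\circ\to p_2(M_1^\circ)$ is a metric surjection, so $p_2\bigl(\unitball(M_1^\circ)\bigr)$ contains the open unit ball of $p_2(M_1^\circ)$; as $S$ is \wstar closed it then contains the whole closed unit ball $\unitball\bigl(p_2(M_1^\circ)\bigr)$. Consequently $\bar{\mathfrak m}$ is \wstar continuous on that closed unit ball, and the Krein--\v Smulian theorem upgrades this to \wstar continuity on all of $p_2(M_1^\circ)$, completing the argument sketched above. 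The one genuinely new point, compared with Lemma~\ref{lemma:centresubsp}, is the systematic interposition of $p_2$: I expect the main obstacle to be the bookkeeping of the doubled annihilators and quotients, namely checking that every topology and annihilator behaves well under $p_2$ (that $\mathfrak m\iin M_2^{\ast\ast}$ is exactly what lets $\psi$ factor through $p_2$, that $p_2(M_1^\circ)$ is again a dual unit ball so Krein--\v Smulian applies, and that its second annihilator in $M_2^{\ast\ast}$ is indeed $M_1^{\ast\ast}$) rather than any new analytic idea.
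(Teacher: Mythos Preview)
Your proof is correct and follows essentially the same approach as the paper's: factor $\mathfrak m$ through $p_2$, use the \wstar continuity of $\psi$ and the quotient-map argument on $p_2\bigl(\unitball(M^{\ast\ast})\odot h\bigr)$ to get \wstar continuity of $\bar{\mathfrak m}$ on $p_2(M_1^\circ)$ via Krein--\v Smulian, then read off $\mu\in M_2$. The paper's proof is just a terse two-line sketch (``as above''), whereas you have spelled out the identifications and the unit-ball bookkeeping---in fact more carefully than necessary, since Hahn--Banach already gives $p_2\bigl(\unitball(M_1^\circ)\bigr)=\unitball\bigl(p_2(M_1^\circ)\bigr)$ directly.
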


\Proof
This is similar to the proof of Lemma \ref{lemma:centresubsp}. Take
$\mathfrak m\in Z_t(M^{\ast\ast})\cap M_2^{\ast\ast}$.
Since $\mathfrak m\iin M_2^{\ast\ast}$, it induces a linear
functional $\mathfrak{m'}$ on $M^\ast/M_2^\circ$, satisfying
$\mathfrak{m}=\mathfrak{m'}\circ p_2$\,. Considering $\psi$ as in
Lemma \ref{lemma:centresubsp} and the quotient topology on
$p_2\bigl(\unitball (M^{\ast\ast})\odot h\bigr)$
arising from the composed mapping
\,$\mathfrak{n}\mapsto\mathfrak{n}\odot h\mapsto p_2(\mathfrak{n}\odot h)$,
the assumption \linebreak
$\mathfrak m\in Z_t(M^{\ast\ast})$ and the condition on $h$ imply as above
\wstar continuity of $\mathfrak{m'}\vert\, p_2(M_1^\circ)$.
Then there exists
$\mu\iin M_2$ such that $\langle\mathfrak{m'},u\rangle=\langle u,\mu\rangle$
holds for all $u\in p_2(M_1^\circ)$ and our claim follows.
\end{proof}

\section{Factorization for thin measures}\vspace{1mm}
    \label{sec:factorization}

The purpose of this section is to prove Theorem~\ref{th:factorization} and
Corollary~\ref{cor:thincentre},
which are key steps in the proof of the main result.
An extended version will be given in Theorem~\ref{th:factorizationsub}.

For $\mu\in\MeasG$ and $x\iin G$
write \,$\mu \conv x = \mu \conv\delta_x$\,.
Thus \,$\mu\conv x(Bx) = \mu(B)$ when $B\subseteq G$ is a Borel set
(or $|\mu|$-measurable).
When $H\subseteq G$, write $ \mu \conv H = \{ \mu \conv h\! : h \iin H \}$.
When $D\subseteq\MeasG$ and $H\subseteq G$, write
$ D \conv H = \{ \mu \conv h\! : \mu\iin D,\, h \iin H \}$.
Recall that $\delta(G) = \{ \delta_x\! : x\iin G \}$,
\;$\cl{\delta(G)}$ denotes the \wstar closure in $\MeasG^{\ast\ast}$,
giving a \wstar compact subset of its unit ball.

\begin{lemma}
    \label{lemma:fextension}
Let $G$ be any locally compact group.
Let $\{ D_\gamma\!: \gamma\in\Gamma \}$ be a family of subspaces
of $\MeasG$ such that $D_\beta \perp D_\gamma$ when
$\beta , \gamma \in \Gamma$, $\beta \neq \gamma$. 
If $h_\gamma\in \unitball (D_\gamma^\ast)$ is given for each
$\gamma\iin\Gamma$\,,
then there exists $h \in \unitball\bigl(\MeasG^\ast\bigr)$
that agrees with $h_\gamma$ on $D_\gamma$ for every $\gamma\iin\Gamma$.
\end{lemma}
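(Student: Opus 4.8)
The plan is to build the functional first on the algebraic span $S=\lspan\bigcup_{\gamma\in\Gamma}D_\gamma$ and then extend it by Hahn--Banach. The whole argument rests on the fact that $\MeasG$ is an abstract $L$-space: mutual singularity of measures is exactly lattice disjointness, so the total variation norm is additive over any finite family of pairwise singular measures. Every element of $S$ can be written as a finite sum $\mu=\sum_{\gamma\in F}\mu_\gamma$ with $F\subseteq\Gamma$ finite and $\mu_\gamma\iin D_\gamma$ (group the terms of a linear combination by index, using that each $D_\gamma$ is a subspace). I would first check that this representation is unique: if $\sum_{\gamma\in F}\mu_\gamma=0$, then for fixed $\gamma_0\iin F$ the measure $\mu_{\gamma_0}=-\sum_{\gamma\neq\gamma_0}\mu_\gamma$ lies in $D_{\gamma_0}$, while the right-hand side is a finite sum of measures each singular to $\mu_{\gamma_0}$ (as $D_\gamma\perp D_{\gamma_0}$), hence itself singular to $\mu_{\gamma_0}$; thus $\mu_{\gamma_0}\perp\mu_{\gamma_0}$, and so $\mu_{\gamma_0}=0$. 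This legitimises the definition $h_0(\mu)=\sum_{\gamma\in F}\langle h_\gamma,\mu_\gamma\rangle$.

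Next I would bound $h_0$. Since the summands $\mu_\gamma$ are pairwise singular, the $L$-space property gives $\norm{\mu}=\sum_{\gamma\in F}\norm{\mu_\gamma}$, whence, using $h_\gamma\iin\unitball(D_\gamma^\ast)$,
\[
\abs{h_0(\mu)}\le\sum_{\gamma\in F}\norm{h_\gamma}\,\norm{\mu_\gamma}
\le\sum_{\gamma\in F}\norm{\mu_\gamma}=\norm{\mu}.
\]
Hence $h_0$ is a well-defined linear functional of norm at most $1$ on $S$ that, by construction, agrees with $h_\gamma$ on each $D_\gamma$.

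Finally I would extend $h_0$ continuously to $\cl{S}$ and then, by the Hahn--Banach theorem, to some $h\iin\unitball\bigl(\MeasG^\ast\bigr)$ defined on all of $\MeasG$ with $\norm{h}\le 1$; this $h$ still restricts to $h_\gamma$ on each $D_\gamma$, giving the claim. I do not expect a genuine obstacle here: the only real content is the additivity of the norm and the uniqueness of the decomposition, both of which reduce to the single elementary observation that a finite sum of measures singular to a fixed $\rho$ is again singular to $\rho$. In particular the (possibly uncountable) size of $\Gamma$ causes no trouble, since $h_0$ is only ever evaluated on finite sums, and the passage from $S$ to all of $\MeasG$ is purely functional-analytic.
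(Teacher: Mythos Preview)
Your proof is correct and follows essentially the same route as the paper's: define the functional on the linear span via the (unique) decomposition into mutually singular pieces, use norm additivity $\norm{\mu'+\mu''}=\norm{\mu'}+\norm{\mu''}$ for $\mu'\perp\mu''$ to bound it by $1$, and extend by Hahn--Banach. You are merely more explicit than the paper about the uniqueness of the decomposition and about invoking Hahn--Banach.
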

\noindent
As a special case (taking $D_x=\lspan\{\delta_x\}$ for $x\iin G$), one can see
that the set $\delta(G)$ is weakly
discrete in $\MeasG$ and thus \wstar discrete in $\MeasG^{\ast\ast}$.

\Proof
If $\mu$ is in  the space $D'$ spanned by $\bigcup_\gamma D_\gamma$, then
 $\mu = \sum_{k=1}^n  \mu_k $
where $\mu_k \iin D_{\gamma_k} $
and $\gamma_j \neq \gamma_k$ for $j\neq k$.
There is then a unique linear functional $h'$ on $D'$
extending each $h_D$\,.
Note that if $\mu',\mu''\in\MeasG$ and $\mu'\perp\mu''$ then
$\norm{\mu'+\mu''} \;=\; \norm{\mu'} + \norm{\mu''} $ and hence
\[
\abs{\langle h',\mu \rangle }
\;\leq\; \sum_k \; \abs{\langle h',\mu_k\rangle}
\;\leq\; \sum_k  \norm{h_{\gamma_k}} \; \norm{\mu_k}
\;\leq\; \sum_k  \norm{\mu_k}
\;=\; \norm{\mu}\,.
\]
This shows that the norm of $h'$ is at most $1$ and thus $h'$ extends to
a linear functional $h \in \unitball\bigl(\MeasG^\ast\bigr)$.
\end{proof}

\begin{lemma}
    \label{lemma:thinAvoidance}
Let $G$ be any locally compact group, $\tau$ uncountable.
Assume that $M_0$ is a subspace of \,$\MeasG$ such that
if $\mu\iin M_0$ then $\abs{\mu}\iin M_0$ and $\mu$ is $\tau$-thin.
If $F\subseteq M_0$ is finite, $D \subseteq \MeasG$ and $\card{D}<\tau$\,,
then there exists \,$x\iin G$ such that \,$D\perp (F\conv x)$.
\end{lemma}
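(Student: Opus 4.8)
The plan is to reduce everything to a single $\tau$-thin positive measure dominating $F$, and then to exploit a countability-of-overlap phenomenon for pairwise singular families. First I would set $\mu_0=\sum_{\mu\in F}\abs{\mu}$. Since $M_0$ is a subspace closed under $\mu\mapsto\abs{\mu}$, we have $\mu_0\iin M_0$, so $\mu_0$ is $\tau$-thin: fix $P\subseteq G$ with $\card{P}=\tau$ and $\mu_0\conv p\perp\mu_0\conv p'$ whenever $p,p'\iin P$, $p\neq p'$. For every $\mu\iin F$ one has $\abs{\mu\conv p}=\abs{\mu}\conv p\leq\mu_0\conv p$ (translation by a point mass preserves the absolute value and the order of positive measures). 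Since a positive measure dominated by one of two mutually singular positive measures is singular to any measure dominated by the other, it follows that for each fixed $\mu\iin F$ the family $\{\abs{\mu}\conv p:p\iin P\}$ is pairwise singular.

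The crux is a countability-of-overlap claim that I would establish next: for fixed $\mu\iin F$ and $\nu\iin D$, the set $\{p\iin P:\nu\not\perp(\mu\conv p)\}$ is countable. Writing $\sigma_p=\abs{\mu}\conv p$ (positive, pairwise singular), take the Lebesgue decomposition $\abs{\nu}=\rho_p+\rho_p'$ with $\rho_p\ll\sigma_p$ and $\rho_p'\perp\sigma_p$; then $\nu\not\perp(\mu\conv p)$ is equivalent to $\rho_p\neq0$. The key estimate is $\sum_{p\in P_0}\norm{\rho_p}\leq\norm{\nu}$ for every finite $P_0\subseteq P$: finitely many pairwise singular positive measures admit pairwise disjoint carriers $\{E_p\}_{p\in P_0}$, each $\rho_p$ is concentrated on $E_p$ and satisfies $\rho_p\leq\abs{\nu}$, whence $\norm{\rho_p}=\rho_p(E_p)\leq\abs{\nu}(E_p)$ and these add up to at most $\abs{\nu}(G)=\norm{\nu}$. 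Consequently $\norm{\rho_p}>0$ for at most countably many $p$, proving the claim.

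Finally I would conclude by cardinal arithmetic. Put $B=\bigcup\{\,\{p\iin P:\nu\not\perp(\mu\conv p)\}:\mu\iin F,\ \nu\iin D\,\}$. This is a union of $\card{F}\cdot\card{D}$ countable sets, so $\card{B}\leq\card{F}\cdot\card{D}\cdot\aleph_0<\tau$, using $\card{F}<\aleph_0$, $\card{D}<\tau$, and that $\tau$ is uncountable. Since $\card{P}=\tau$, there exists $x\iin P\setminus B$, and for this $x$ we have $\nu\perp(\mu\conv x)$ for all $\mu\iin F$ and $\nu\iin D$; that is, $D\perp(F\conv x)$. I expect the main obstacle to be the overlap claim of the second paragraph---specifically the summability bound $\sum_p\norm{\rho_p}\leq\norm{\nu}$ obtained through disjoint carriers of finitely many singular measures---since once this is available, the thinness of $\mu_0$ furnishes the required reservoir of $\tau$ candidates and the rest is counting.
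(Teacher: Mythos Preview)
Your proof is correct and follows essentially the same approach as the paper: form the dominating measure $\mu_0=\sum_{\mu\in F}\abs{\mu}\in M_0$, use its $\tau$-thinness to get $P$, invoke the countability-of-overlap for pairwise singular families, and finish by cardinal arithmetic. The only differences are cosmetic: the paper applies the overlap argument directly to $\mu_0\conv p$ (so there is no need to split over $\mu\in F$, since $D\perp\mu_0\conv x$ already forces $D\perp F\conv x$), and it states the countability of $P_\mu$ in one line (``because $\mu$ is a finite measure''), whereas you spell out the summability bound $\sum_p\norm{\rho_p}\le\norm{\nu}$ explicitly.
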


\Proof
The measure $\nu = \sum_{\xi\in F}\,\abs{\xi}$ \,is $\tau$-thin,
hence there is a set $P\subseteq G$ such that
$\card{P}=\tau$ and $\nu\conv p\,\perp\,\nu\conv p'$ for all
$p,p'\iin P$ with $p\neq p'$.
For every $\mu\iin D$ the set $P_\mu = \{p\iin P\! : \mu
\text{ is not singular to } \nu\conv p \,\}$
must be countable, because $\mu$ is a finite measure. Thus
\,$\card{\,\bigcup_{\mu\in D} P_\mu}\leq\card{D}\cdot\aleph_0<\tau=\card{P}$,
so that there exists \,$x \in P \setminus \bigcup_{\mu\in D} P_\mu$\,. Then
$D\perp\,\nu\conv x$ \,and therefore $D\perp (F\conv x)$.
\end{proof}

\begin{lemma}
    \label{lemma:shifts}
Let $G$ be any locally compact group, $\tau$ uncountable.
Assume that $M_0$ is a subspace of $\MeasG$ such that
if $\mu\iin M_0$ then $\abs{\mu}\iin M_0$ and $\mu$ is $\tau$-thin.
If $\{ F_\gamma\!: \gamma\in\Gamma \}$ is a family of finite subsets of $M_0$
and $\card{\Gamma}\leq \tau$,
then there exist $x_\gamma \iin G$ for $\gamma\iin\Gamma$ such that
\,$(F_\beta \conv {x_\beta} ) \perp (F_\gamma \conv {x_\gamma} ) $
when $\beta , \gamma \in \Gamma$, $\beta \neq \gamma$.
\end{lemma}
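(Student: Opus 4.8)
The plan is to build the elements $x_\gamma$ one at a time by transfinite recursion along a suitable well-ordering of $\Gamma$, at each stage invoking Lemma~\ref{lemma:thinAvoidance} to separate the freshly translated set $F_\gamma\conv x_\gamma$ from everything chosen before it. The hypotheses placed on $M_0$ here are exactly those demanded by that lemma, so the only real work is arranging the induction so that the ``obstruction set'' $D$ we must avoid at each stage stays of cardinality $<\tau$.

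Since $\card{\Gamma}\le\tau$ and $\tau$ is a cardinal, I would first well-order $\Gamma$ in order type at most $\tau$; the reason for this choice is that every proper initial segment then has cardinality strictly less than $\tau$, because $\tau$, being an initial ordinal, has all of its predecessors of smaller cardinality. Writing the ordering as $<$, I then proceed by recursion: given $\gamma\in\Gamma$ and elements $x_\beta\in G$ already chosen for all $\beta<\gamma$, set
\[
D \;=\; \bigcup_{\beta<\gamma}\,(F_\beta\conv x_\beta)\,.
\]
The index set $\{\beta:\beta<\gamma\}$ has cardinality $<\tau$ and each $F_\beta$ is finite, so $D$ is a union of fewer than $\tau$ finite sets, and since $\tau$ is uncountable one gets $\card{D}\le\card{\{\beta:\beta<\gamma\}}\cdot\aleph_0<\tau$. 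As $F_\gamma\subseteq M_0$ is finite, Lemma~\ref{lemma:thinAvoidance} applied to this $D$ and $F_\gamma$ supplies an $x_\gamma\in G$ with $D\perp(F_\gamma\conv x_\gamma)$.

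By construction $F_\beta\conv x_\beta\subseteq D$ for every $\beta<\gamma$, so $(F_\beta\conv x_\beta)\perp(F_\gamma\conv x_\gamma)$ for all such $\beta$; and since mutual singularity is symmetric, this gives the required disjointness for every pair $\beta\neq\gamma$ once the recursion is complete. The argument is essentially bookkeeping, and the single point I would treat as the main obstacle is verifying the cardinality bound $\card{D}<\tau$ — this is precisely where the uncountability of $\tau$ together with the choice of a well-ordering having small initial segments is used. Everything else is a direct appeal to Lemma~\ref{lemma:thinAvoidance} and to the symmetry of $\perp$.
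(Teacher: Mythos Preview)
Your proof is correct and follows exactly the approach the paper indicates: transfinite induction along a well-ordering of $\Gamma$, invoking Lemma~\ref{lemma:thinAvoidance} at each step. The paper's proof is a single sentence to this effect; you have simply filled in the details the authors left implicit, including the key observation that well-ordering $\Gamma$ in type $\le\tau$ keeps every initial segment of size $<\tau$, so the obstruction set $D$ always satisfies the cardinality hypothesis of Lemma~\ref{lemma:thinAvoidance}.
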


\Proof
Construct $x_\gamma$ by transfinite induction,
using Lemma~\ref{lemma:thinAvoidance} at each step.
\end{proof}

We say that a direct sum $M_2=M_0\oplus M_1$ of subspaces of $\MeasG$ is
{\it $G$-invariant}, if  \,$M_i\conv G\subseteq M_i$ holds for $i=0,1$
\;(which implies $M_2\conv G\subseteq M_2$).

\begin{lemma}
    \label{lemma:dense}
Let $G$ be a locally compact group,
$M_2 = M_0 \oplus M_1$ a $G$-invariant topological direct sum of closed
subspaces of \,$\MeasG$.
Let $\scrO$ be a collection of  \wstar open subsets of
$M_0^\ast$, each of them having non-empty intersection with
\,$\unitball (M_0^\ast)$.
\,$\tau=\card{\scrO}$ shall be uncountable.
Assume that $\mu\iin M_0$ implies that $\abs{\mu}\iin M_0$ and that
$\mu$ is \,$\tau$-thin.
\\
Then there exists \,$h\iin M_1^\circ$ such that the (projected) orbit
\,$p_0\bigl(\delta({G}) \odot h\bigr)$
intersects every set from~$\scrO$ and \,$h|M_0\iin\unitball (M_0^\ast)$.
\end{lemma}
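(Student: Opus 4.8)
The plan is to diagonalize: there are $\tau=\card{\scrO}$ targets to meet, and we will use the $\tau$-thinness of the measures in $M_0$ to spread out $\tau$ mutually singular ``test blocks'', on which Lemma~\ref{lemma:fextension} lets us prescribe the values of $h$ independently. The crucial preliminary observation decouples the two requirements. Since $M_0\conv G\subseteq M_0$ by $G$-invariance, for $\mu\iin M_0$ and $x\iin G$ we have $\mu\conv x\iin M_0$ and, by \thetag{\#}, $\langle p_0(\delta_x\odot h),\mu\rangle=\langle\delta_x\odot h,\mu\rangle=\langle h,\mu\conv x\rangle$, which depends on $h$ only through $h|M_0$. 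Thus the whole orbit condition sees $h$ only via its restriction to $M_0$, while the requirement $h\iin M_1^\circ$ can be arranged at the very end. So it suffices to construct a functional of norm at most $1$ on $M_0$ with the correct orbit, and afterwards extend it across the direct sum $M_2=M_0\oplus M_1$ by making it vanish on $M_1$.

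First I would extract finite data from each target. For every $V\iin\scrO$ choose $g_V\iin V\cap\unitball(M_0^\ast)$ together with a basic \wstar neighbourhood of $g_V$ inside $V$, that is, a finite set $F_V\subseteq M_0$ and $\varepsilon_V>0$ with $\{g\iin M_0^\ast:\abs{\langle g-g_V,\mu\rangle}<\varepsilon_V\text{ for all }\mu\iin F_V\}\subseteq V$. Since every $\mu\iin M_0$ satisfies $\abs{\mu}\iin M_0$ and is $\tau$-thin, and the indexing family has cardinality $\leq\tau$, Lemma~\ref{lemma:shifts} supplies shifts $x_V\iin G$ so that the translated blocks are mutually singular: $(F_V\conv x_V)\perp(F_W\conv x_W)$ whenever $V\neq W$.

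Next I would prescribe the functional on each block. Put $D_V=\lspan(F_V\conv x_V)\subseteq M_0$; these are pairwise singular subspaces of $\MeasG$. Right translation $T_{x_V}\colon\mu\mapsto\mu\conv x_V$ is a linear isometry of $\MeasG$ carrying $\lspan F_V$ onto $D_V$, so $h_V:=g_V\circ T_{x_V}^{-1}$ is a well-defined functional on $D_V$ with $\norm{h_V}\leq\norm{g_V}\leq 1$ and $h_V(\mu\conv x_V)=\langle g_V,\mu\rangle$ for $\mu\iin\lspan F_V$. Applying Lemma~\ref{lemma:fextension} to the family $\{D_V:V\iin\scrO\}$ and the functionals $h_V\iin\unitball(D_V^\ast)$ yields $\tilde h\iin\unitball(\MeasG^\ast)$ agreeing with each $h_V$ on $D_V$. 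Its restriction $h^0:=\tilde h|M_0\iin\unitball(M_0^\ast)$ then satisfies $\langle h^0,\mu\conv x_V\rangle=\langle g_V,\mu\rangle$ for every $\mu\iin F_V$. By the decoupling identity above, for any $h$ with $h|M_0=h^0$ the functional $p_0(\delta_{x_V}\odot h)$ agrees with $g_V$ on all of $F_V$, hence lies in $V$; so the orbit $p_0(\delta(G)\odot h)$ meets every member of $\scrO$.

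Finally I would produce $h\iin M_1^\circ$ with $h|M_0=h^0$. On $M_2=M_0\oplus M_1$ define $h(m_0+m_1)=\langle h^0,m_0\rangle$; this is bounded because the direct sum is topological, it restricts to $h^0$ on $M_0$, and it vanishes on $M_1$. A Hahn--Banach extension to $\MeasG^\ast$ preserves $h|M_1=0$, so $h\iin M_1^\circ$, while leaving $h|M_0=h^0\iin\unitball(M_0^\ast)$ untouched. I expect the only genuine difficulty to be the norm bookkeeping: meeting all $\tau$ prescribed values $g_V$ simultaneously while keeping $\norm{h|M_0}\leq 1$. This is precisely what the mutual singularity buys, namely norm additivity across disjointly supported blocks, so that Lemma~\ref{lemma:fextension} glues the individually norm-one pieces $h_V$ into a single norm-one functional; the isometric translation $T_{x_V}$ is what lets each $h_V$ realise its target $g_V$ exactly, with no loss of norm.
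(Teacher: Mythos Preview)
Your proof is correct and follows essentially the same route as the paper: refine each $V\iin\scrO$ to a basic \wstar neighbourhood determined by a finite set $F_V\subseteq M_0$ and a point $g_V\iin\unitball(M_0^\ast)$, apply Lemma~\ref{lemma:shifts} to shift the $F_V$'s into mutually singular blocks, then use Lemma~\ref{lemma:fextension} to glue the transported functionals $h_V=g_V\circ T_{x_V}^{-1}$ into a single norm-one functional, and finally extend across $M_2=M_0\oplus M_1$ by zero on $M_1$. Your explicit decoupling observation (that $p_0(\delta_x\odot h)$ depends only on $h|M_0$) is precisely what underlies the paper's final verification, and your care in phrasing the reduction to basic neighbourhoods is slightly more explicit than the paper's ``without loss of generality'' clause, but the argument is the same.
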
\noindent
As above,
$p_0\!:\MeasG^\ast\to \MeasG^\ast/M_0^\circ$ denotes the canonical
projection. Under the identification of $\MeasG^\ast/M_0^\circ$ with
$M_0^\ast$, we have $p_0(h)=h|M_0$ (restriction of the functional).

\Proof
Without loss of generality assume that each $U\iin\scrO$ is a basic
neighbourhood of the form
\[
U = \{ f\iin M_0^\ast :\;
\abs{\,\langle f - g_U ,\,\mu\,\rangle} < \varepsilon_U
\text{ for all \,} \mu\iin F_U \}
\]
where $F_U \subseteq M_0$ is a finite set,
$g_U \in \unitball (M_0^\ast)$ and $\varepsilon_U > 0$.
Apply Lemma~\ref{lemma:shifts} with $\Gamma = \scrO$ to obtain elements
$x_U \iin{G}$ for $U\iin\scrO$, such that
if \,$U,V\iin\scrO$, $U\neq V$, then
$(F_U \conv {x_U} ) \perp (F_V \conv {x_V} ) $\,.

Then apply Lemma~\ref{lemma:fextension}, taking \,$\Gamma = \scrO$\,,
\,$D_U$ the space spanned by \,$F_U \conv {x_U}$ and the
functionals \,$h_U \iin\unitball (D_U^\ast)$ defined by
\,$\langle h_U ,\nu\rangle = \langle\,g_U\,,\nu\conv{x_U^{-1}}\rangle$.
Thus there is\linebreak
$h\in \unitball (M_0^\ast)$ that agrees with $h_U$ on $D_U$ for
every $U\iin\scrO$ and we may extend it to $M_2$ so that $h=0$ on $M_1$\,.
Extending $h$ further to $\MeasG$\,,
this means that $h\iin M_1^\circ$\,, $h|M_0\in\unitball (M_0^\ast)$
\,and for $\mu\iin F_U$\,, we have
\[
\langle \delta_{x_U}\odot h,\,\mu\,\rangle
= \langle\,h\,,\mu\conv{x_U}\rangle
= \langle\,h_U ,\mu\conv{x_U}\rangle
= \langle\,g_U ,\mu\conv{x_U} \conv {x_U^{-1}}\rangle
= \langle g_U ,\mu\rangle \; ,
\]
hence \,$p_0(\delta_{x_U}\odot h)\in U$\,.
\end{proof}
\begin{Rem}
The argument gets somewhat more transparent if $M_0\perp M_1$ \,(which
will always be the case in the applications below). Then $M_0^\ast$
is {\it isometrically} isomorphic to $p_2(M_1^\circ)$ \,(these are the
functionals on $M_2$ vanishing on $M_1$) and the construction
above gives \,$h\iin\unitball (M_1^\circ)$\,.

The method of proof above shows a slightly stronger statement: \
Assume that given are finite dimensional subspaces $D_{\gamma}$ of $M_0$ and
functionals $h_\gamma\!\in \unitball (D_\gamma^\ast)$ for $\gamma\iin\Gamma$
such that
$\tau=\card{\Gamma}$ is uncountable, and $M_0,M_1$ are as in the lemma.
Then there exists \,$h\iin M_1^\circ$ with \,$h|M_0\iin\unitball (M_0^\ast)$
and such that for every $\gamma\iin\Gamma$ there exists
\,$h'_\gamma\in \delta({G}) \odot h$ satisfying
\,$h'_\gamma|D_\gamma=h_\gamma$\,.
\end{Rem}

\begin{theorem}[Factorization for thin measures]
    \label{th:factorization}
Let $G$ be a locally compact group,
$\MeasG = M_0 \oplus M_1$ a $G$-invariant topological direct sum such that
$d(M_0)$ is uncountable.
Assume that $\mu\iin M_0$ implies that $\abs{\mu}\iin M_0$ and that
$\mu$ is \,$d(M_0)$\,-thin.
Then there exists \,$h\iin M_1^\circ$ such that
\;$\cl{\delta(G)} \odot h \supseteq \unitball (M_1^\circ)$\,.
\end{theorem}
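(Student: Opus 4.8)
The plan is to derive the theorem from Lemma~\ref{lemma:dense} by feeding that lemma a family $\scrO$ which is a \emph{base} for the weak$^\ast$ topology on $\unitball(M_0^\ast)$; then the conclusion there — that the projected orbit $p_0(\delta(G)\odot h)$ meets every member of $\scrO$ — upgrades to genuine weak$^\ast$ density of the orbit. Set $\tau=d(M_0)$, uncountable by hypothesis. Recall (as noted after Corollary~\ref{cor:centresum}) that $M/M_1\cong M_0$, so the canonical projection $p_0\colon\MeasG^\ast\to\MeasG^\ast/M_0^\circ\cong M_0^\ast$ restricts on $M_1^\circ=(M/M_1)^\ast$ to a linear isomorphism onto $M_0^\ast$; since the weak$^\ast$ topology on $M_1^\circ$ is the one induced from $M^\ast$, the map $p_0|M_1^\circ\colon M_1^\circ\to M_0^\ast$ is in fact a weak$^\ast$-homeomorphism, and it is norm-decreasing, so $p_0(\unitball(M_1^\circ))\subseteq\unitball(M_0^\ast)$. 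I would therefore reformulate the goal as: produce $h\in M_1^\circ$ with $h|M_0\in\unitball(M_0^\ast)$ whose orbit $\delta(G)\odot h$ has $p_0$-image weak$^\ast$-dense in all of $\unitball(M_0^\ast)$.

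First I would record that the orbit lands where it should. Because the sum is $G$-invariant, $\nu\conv\delta_x\in M_1$ for $\nu\in M_1$, so $\langle\delta_x\odot h,\nu\rangle=\langle h,\nu\conv\delta_x\rangle=0$, giving $\delta(G)\odot h\subseteq M_1^\circ$; and for $\mu\in M_0$ one has $\mu\conv\delta_x\in M_0$ with $\norm{\mu\conv\delta_x}=\norm{\mu}$, whence $\norm{p_0(\delta_x\odot h)}\le\norm{h|M_0}\le 1$, so $p_0(\delta(G)\odot h)\subseteq\unitball(M_0^\ast)$. Next comes the quantitative step. Fixing a dense $D_0\subseteq M_0$ with $\card{D_0}=\tau$, the weak$^\ast$ topology on $\unitball(M_0^\ast)$ coincides with the topology of pointwise convergence on $D_0$, so $\unitball(M_0^\ast)$ embeds as a weak$^\ast$-homeomorphic subspace of the product $\prod_{\mu\in D_0}\{\,z:\abs{z}\le\norm{\mu}\,\}$ of $\tau$ compact metric spaces, which has weight $\le\tau$. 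Hence $\unitball(M_0^\ast)$ admits a base of relatively weak$^\ast$-open sets of cardinality $\le\tau$; enlarging each to an open subset of $M_0^\ast$ and padding by repetition, I obtain a family $\scrO$ of weak$^\ast$-open sets, each meeting $\unitball(M_0^\ast)$, with $\card{\scrO}=\tau$. Applying Lemma~\ref{lemma:dense} to this $\scrO$ (the thinness parameter now matches, $\tau=\card{\scrO}=d(M_0)$) yields $h\in M_1^\circ$ with $h|M_0\in\unitball(M_0^\ast)$ and $p_0(\delta(G)\odot h)$ meeting every member of $\scrO$.

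Since $\scrO$ arises from a base and the orbit already lies inside $\unitball(M_0^\ast)$, meeting every member of $\scrO$ forces $p_0(\delta(G)\odot h)$ to meet every nonempty relatively weak$^\ast$-open subset of $\unitball(M_0^\ast)$; thus $p_0(\delta(G)\odot h)$ is weak$^\ast$-dense in $\unitball(M_0^\ast)$, and as it lies in the weak$^\ast$-closed set $\unitball(M_0^\ast)$ its weak$^\ast$-closure equals $\unitball(M_0^\ast)$. To return to the bidual I would use that $\mathfrak n\mapsto\mathfrak n\odot h$ is weak$^\ast$-continuous (Lemma~\ref{lemma:wcontinuity}(2)) and $\cl{\delta(G)}$ is weak$^\ast$-compact, so $\cl{\delta(G)}\odot h$ is weak$^\ast$-compact and coincides with the weak$^\ast$-closure of $\delta(G)\odot h$ in $M^\ast$; this closure lies in the weak$^\ast$-closed set $M_1^\circ$, and since $p_0|M_1^\circ$ is a weak$^\ast$-homeomorphism, applying $p_0$ gives $p_0(\cl{\delta(G)}\odot h)=\unitball(M_0^\ast)\supseteq p_0(\unitball(M_1^\circ))$. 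Injectivity of $p_0$ on $M_1^\circ$ then yields $\cl{\delta(G)}\odot h\supseteq\unitball(M_1^\circ)$, as required.

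The step I expect to demand the most care is the passage from ``the orbit meets each set of $\scrO$'' to honest weak$^\ast$ density, which hinges on arranging $\scrO$ as a genuine base of cardinality exactly $\tau=d(M_0)$: this is precisely where the hypothesis that every $\mu\in M_0$ is $d(M_0)$-thin is consumed through Lemma~\ref{lemma:dense}, and where the weight estimate for $\unitball(M_0^\ast)$ must be made to match the thinness parameter. The remaining bookkeeping — that the orbit stays inside both $\unitball(M_0^\ast)$ and $M_1^\circ$, and that $p_0$ transports weak$^\ast$-closures faithfully — is routine once the weak$^\ast$-homeomorphism $p_0|M_1^\circ$ and the weak$^\ast$-compactness of $\cl{\delta(G)}$ are in hand; note in particular that no orthogonality of $M_0$ and $M_1$ is needed, since the target $\unitball(M_0^\ast)$ already contains the image $p_0(\unitball(M_1^\circ))$.
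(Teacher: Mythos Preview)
Your proof is correct and follows essentially the same route as the paper: both construct a weak$^\ast$ base $\scrO$ for $\unitball(M_0^\ast)$ of cardinality $d(M_0)$ from a norm-dense subset, feed it to Lemma~\ref{lemma:dense}, and then use weak$^\ast$ compactness of $\cl{\delta(G)}$ together with the injectivity of $p_0$ on $M_1^\circ$ to upgrade density to the stated inclusion. Your treatment is slightly more explicit about the weak$^\ast$ homeomorphism $p_0|M_1^\circ\to M_0^\ast$ and the weight estimate, but the paper's element-wise argument at the end (finding $h_2$ with $p_0(h_2)=p_0(h_1)$ and invoking $G$-invariance) is exactly your injectivity step.
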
\noindent
In fact, our argument produces an $h\in\unitball (M_0^\ast)$ with
\;$\cl{\delta(G)} \odot h = \unitball (M_0^\ast)$\,.

\Proof
$d(M_0)$ refers to the norm topology. Taking a norm-dense subset of $M_0$
with cardinality $d(M_0)$,
one can find a family $\scrO$ of \wstar open subsets of $M_0^\ast$
whose intersections with $\unitball (M_0^\ast)$ give a \wstar open base
and such that $\card{\scrO}=d(M_0)$. Now take $h\iin M_1^\circ$
as given by Lemma~\ref{lemma:dense}. The set \,$\cl{\delta(G)}\odot h$ is
\wstar closed
by Lemma~\ref{lemma:wcontinuity}. Note that $h|M_0\in \unitball (M_0^\ast)$
implies \,$p_0(\delta(G) \odot h) \subseteq \unitball (M_0^\ast)$
as in part~(1) of Lemma~\ref{lemma:wcontinuity}.
Then it follows easily from the
properties of $h$ and $\scrO$ that
\,$p_0\bigl(\cl{\delta(G)} \odot h \bigr)= \unitball (M_0^\ast)$\,.

For $h_1\iin\unitball (M_1^\circ)$, we have
\,$p_0(h_1)\in\unitball (M_0^\ast)$.
Hence there exists \,$h_2\iin\cl{\delta(G)}\odot h$ such that
$p_0(h_2)=p_0(h_1)$. $G$-invariance of $M_1$ implies $h_2\iin M_1^\circ$ and
it follows that $h_2=h_1$\,.
\end{proof}

\begin{corollary}
    \label{cor:thincentre}
Let $G$ be a locally compact group, $\MeasG = M_0 \oplus M_1$
a $G$-invariant topological direct sum of closed subspaces such that
$d(M_0)$ is uncountable.
Assume that $\mu\iin M_0$ implies that $\abs{\mu}\iin M_0$ and that
$\mu$ is \,$d(M_0)$\,-thin.
Then one gets\linebreak
$Z_t\bigl(\MeasG^{\ast\ast}\bigr)\subseteq M_0\oplus M_1^{\ast\ast}$.
\end{corollary}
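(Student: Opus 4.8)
The plan is to read this corollary as the immediate fusion of the factorization theorem just established with the abstract topological-centre estimate of Corollary~\ref{cor:centresum}. The hypotheses imposed here on $G$, on the $G$-invariant decomposition $\MeasG = M_0\oplus M_1$ into closed subspaces, on the uncountability of $d(M_0)$, and on the stability of $M_0$ under $\mu\mapsto\abs{\mu}$ together with $d(M_0)$-thinness of its members, are verbatim those of Theorem~\ref{th:factorization}. So the first step is simply to invoke that theorem, producing a functional $h\iin M_1^\circ$ with $\cl{\delta(G)}\odot h\supseteq\unitball(M_1^\circ)$.

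The second step is to recast this in the form demanded by Lemma~\ref{lemma:centresubsp}. Here I would use the fact recorded just before the factorization section, namely that $\cl{\delta(G)}$ is a \wstar compact subset of $\unitball(\MeasG^{\ast\ast})$. Consequently $\cl{\delta(G)}\odot h\subseteq\unitball(\MeasG^{\ast\ast})\odot h$, and therefore $\unitball(\MeasG^{\ast\ast})\odot h\supseteq\unitball(M_1^\circ)$. This is precisely the standing assumption on $M_1$ required by Lemma~\ref{lemma:centresubsp}, and hence by Corollary~\ref{cor:centresum}, applied with $M=\MeasG$.

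With the two structural ingredients in hand --- that $\MeasG$ is the topological direct sum $M_0\oplus M_1$ of closed subspaces, and that $M_1$ satisfies that assumption --- I would conclude by a direct appeal to Corollary~\ref{cor:centresum}, yielding $Z_t(\MeasG^{\ast\ast})\subseteq M_0\oplus M_1^{\ast\ast}$. (Equivalently, one could invoke Lemma~\ref{lemma:centresubsp} to get $Z_t(\MeasG^{\ast\ast})\subseteq\MeasG+M_1^{\ast\ast}$ and then note that, $M_1$ being norm-closed and hence weakly closed, its \wstar closure $M_1^{\ast\ast}$ meets $\MeasG$ exactly in $M_1$, so that $\MeasG+M_1^{\ast\ast}=M_0\oplus M_1^{\ast\ast}$.) Since essentially all the work has already been done in Theorem~\ref{th:factorization} and Corollary~\ref{cor:centresum}, I do not anticipate a genuine obstacle; the only point needing a word of justification is the passage from the orbit $\cl{\delta(G)}\odot h$ to the full ball $\unitball(\MeasG^{\ast\ast})\odot h$, which is immediate once one recalls that $\cl{\delta(G)}$ lies in the unit ball of the bidual.
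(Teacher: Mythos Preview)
Your proposal is correct and follows essentially the same approach as the paper: the paper's proof is a one-line appeal to Corollary~\ref{cor:centresum} combined with Theorem~\ref{th:factorization}, and you have simply spelled out the intermediate step (that $\cl{\delta(G)}\subseteq\unitball(\MeasG^{\ast\ast})$ yields the hypothesis of Lemma~\ref{lemma:centresubsp}) which the paper leaves implicit.
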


\Proof
This follows immediately from Corollary~\ref{cor:centresum} in
combination with Theorem~\ref{th:factorization}.
\end{proof}

\begin{Rems}
In this section, we use several times the assumption that
$\mu\iin M_0$ implies $\abs{\mu}\iin M_0$\,. For a closed subspace
$M_0$ of $\MeasG$ this is in fact equivalent to $M_0$ being a
vector sublattice (i.e., in the complex case, $\mu\iin M_0$ implies that
its real and imaginary part belong to $M_0$ and the set of real measures in
$M_0$ is closed under the lattice operations).

In our paper, thinness of measures is a strong tool for non-separable spaces.
Formally, Theorem~\ref{th:factorization} and Corollary~\ref{cor:thincentre}
stay true without the condition that $d(M_0)$ should be uncountable (i.e.,
$M_0$ non-separable). But observe that there are no non-zero {\it separable}
$G$-invariant closed sublattices $M_0$ in $\MeasG$ such that every $\mu$ in
$M_0$ is \,$d(M_0)$\,-thin\,! (\,Let $\{\mu_n\!:\,n\ge0\}$ be a dense subset
of $\unitball (M_0)$ and put $\mu = \sum_{n=0}^\infty |\mu_n|/2^{n+1}$\,.
Then it is easy to see that $\nu\ll\mu$ holds for every \,$\nu\iin M_0$\,.
Thus, by $G$-invariance of $M_0$\,, it follows that
$\mu$ is not even $2$-thin, unless $\mu=0$\,). See also the comment following
Theorem \ref{th:perfectset} below.
\end{Rems}

Again there is an extended version, needed for the proof of our Main Theorem
in the non-metrizable case.

\begin{theorem}[Factorization on subspaces]
    \label{th:factorizationsub}
Let $G$ be a locally compact group,
$M_2 = M_0 \oplus M_1$ a topological direct sum of closed subspaces of
$\MeasG$ and let $\tau\ge d(M_0)$ be uncountable.
Assume that $\mu\iin M_0$ implies that $\abs{\mu}\iin M_0$ and that
$\mu$ is \,$\tau$-thin.
Furthermore, assume that there exists a $G$-invariant topological direct sum
\,$\widetilde M_2 = \widetilde M_0 \oplus \widetilde M_1$ of closed subspaces
of $\MeasG$ such that $M_i\subseteq\widetilde M_i$ holds for $i=0,1$. 
\\[.5mm]
Then there exists \,$h\iin M_1^\circ$ such that
\,$p_2\bigl(\cl{\delta(G)} \odot h\bigr) \supseteq
p_2\bigl(\unitball (M_1^\circ)\bigr)$ and
it follows that
$Z_t\bigl(\MeasG^{\ast\ast}\bigr)\cap M_2^{\ast\ast}\subseteq
M_0\oplus M_1^{\ast\ast}$.
\end{theorem}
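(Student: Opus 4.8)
The plan is to run the orbit-covering construction behind Theorem~\ref{th:factorization} and Lemma~\ref{lemma:dense}, but to carry out the translations inside the auxiliary $G$-invariant decomposition $\widetilde M_2=\widetilde M_0\oplus\widetilde M_1$ while keeping the thinness input confined to the small sublattice $M_0$. Since $M_0$ is a sublattice in which every element is $\tau$-thin and $d(M_0)\le\tau$, I would first fix a norm-dense subset of $M_0$ of cardinality $d(M_0)$ and, exactly as in the proof of Theorem~\ref{th:factorization}, extract a family $\scrO$ of \wstar open subsets of $M_0^\ast$ whose traces on $\unitball(M_0^\ast)$ form a \wstar base, with $\card{\scrO}=d(M_0)\le\tau$. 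Writing each $U\iin\scrO$ as $\{f:\abs{\langle f-g_U,\mu\rangle}<\varepsilon_U\text{ for }\mu\iin F_U\}$ with $F_U\subseteq M_0$ finite and $g_U\iin\unitball(M_0^\ast)$, I would apply Lemma~\ref{lemma:shifts} (whose hypotheses only involve the thin sublattice $M_0$, not its $G$-invariance) to obtain translates $x_U\iin G$ with $(F_U\conv x_U)\perp(F_V\conv x_V)$ for $U\neq V$.

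The decisive point is that, although the translates $F_U\conv x_U$ need no longer lie in $M_0$, they do lie in $\widetilde M_0$ by $G$-invariance of $\widetilde M_0$, and they remain mutually singular. Hence I would feed the spaces $D_U=\lspan(F_U\conv x_U)\subseteq\widetilde M_0$ and the functionals $\langle h_U,\nu\rangle=\langle g_U,\nu\conv x_U^{-1}\rangle$ into Lemma~\ref{lemma:fextension}, producing $h_0'\in\unitball(\MeasG^\ast)$ agreeing with $h_U$ on each $D_U$; put $h_0=h_0'|\widetilde M_0\iin\unitball(\widetilde M_0^\ast)$. Now I would define $h$ on $\widetilde M_2$ by $h=h_0$ on $\widetilde M_0$ and $h=0$ on $\widetilde M_1$, and extend it to $\MeasG$. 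This yields $h\iin\widetilde M_1^\circ\subseteq M_1^\circ$ with $h|\widetilde M_0=h_0$ of norm at most $1$ and $h|D_U=h_U$.

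Next I would verify the covering statement $p_2(\cl{\delta(G)}\odot h)\supseteq p_2(\unitball(M_1^\circ))$. First, each $\delta_x\odot h$ annihilates $\widetilde M_1$: for $\mu\iin\widetilde M_1$ one has $\mu\conv x\iin\widetilde M_1$ by $G$-invariance, so $\langle\delta_x\odot h,\mu\rangle=\langle h,\mu\conv x\rangle=0$; as $\widetilde M_1^\circ$ is \wstar closed this gives $\cl{\delta(G)}\odot h\subseteq\widetilde M_1^\circ\subseteq M_1^\circ$. Second, for $\mu\iin M_0$ the estimate $\abs{\langle\delta_x\odot h,\mu\rangle}=\abs{\langle h_0,\mu\conv x\rangle}\le\norm{\mu}$ (using $\mu\conv x\iin\widetilde M_0$ and $\norm{h_0}\le1$) shows $p_0(\delta(G)\odot h)\subseteq\unitball(M_0^\ast)$, while $\langle\delta_{x_U}\odot h,\mu\rangle=\langle h_U,\mu\conv x_U\rangle=\langle g_U,\mu\rangle$ for $\mu\iin F_U$ places $p_0(\delta_{x_U}\odot h)$ in $U$. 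Since $\cl{\delta(G)}\odot h$ is \wstar compact (Lemma~\ref{lemma:wcontinuity}(2)) and $p_0$ is \wstar continuous, these two facts and the base property of $\scrO$ force $p_0(\cl{\delta(G)}\odot h)=\unitball(M_0^\ast)$. Given $u\iin\unitball(M_1^\circ)$, we have $u|M_0\iin\unitball(M_0^\ast)$, so some $g\iin\cl{\delta(G)}\odot h$ satisfies $p_0(g)=u|M_0$; as $g$ and $u$ both vanish on $M_1$ and agree on $M_0$, they agree on $M_2=M_0\oplus M_1$, i.e. $p_2(g)=p_2(u)$, which is the required covering.

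Finally, since $\cl{\delta(G)}\subseteq\unitball(\MeasG^{\ast\ast})$, the covering gives $p_2(\unitball(\MeasG^{\ast\ast})\odot h)\supseteq p_2(\unitball(M_1^\circ))$, so Lemma~\ref{lemma:centresubsp2} yields $Z_t(\MeasG^{\ast\ast})\cap M_2^{\ast\ast}\subseteq M_2+M_1^{\ast\ast}$. To reach the stated form I would note $M_2+M_1^{\ast\ast}=M_0+M_1^{\ast\ast}$ and that this sum is direct, because $M_0\cap M_1^{\ast\ast}=M_0\cap(\MeasG\cap M_1^{\ast\ast})=M_0\cap M_1=\{0\}$ (the closed subspace $M_1$ is weakly closed, so $M_1^{\ast\ast}\cap\MeasG=M_1$). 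The main obstacle is the bookkeeping in the first two paragraphs: thinness is only available on $M_0$, so all translations must be generated from $M_0$, yet $h$ must be controlled on and vanish on the larger $G$-invariant pieces $\widetilde M_0,\widetilde M_1$; the whole argument hinges on the fact that point-mass convolution maps $M_0$ isometrically into $\widetilde M_0$ and $M_1$ into $\widetilde M_1$, which is exactly what makes the norm bound $\norm{h_0}\le1$ and the annihilation on $\widetilde M_1$ transfer to the orbit $\cl{\delta(G)}\odot h$.
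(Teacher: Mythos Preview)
Your argument is correct and follows essentially the same route as the paper's proof: the paper phrases the construction as ``apply Lemma~\ref{lemma:dense} to $\widetilde\scrO=\{q^{-1}(U):U\in\scrO\}$ and $\widetilde M_2=\widetilde M_0\oplus\widetilde M_1$'' (where $q\!:\widetilde M_0^\ast\to M_0^\ast$ is restriction), whereas you unpack that step explicitly, making transparent the point the paper leaves implicit---that the finite sets $F_U$ defining the pulled-back neighbourhoods still lie in $M_0$, so thinness is only ever invoked on $M_0$ while $G$-invariance is used only for $\widetilde M_0,\widetilde M_1$. The final reduction $M_2+M_1^{\ast\ast}=M_0\oplus M_1^{\ast\ast}$ is also handled correctly.
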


\Proof
This is similar as above. There is a canonical projection
\,$q\!:(\widetilde M_0)^\ast\to M_0^\ast$\,, using restriction.
As above, take a family $\scrO$ of \wstar open subsets of $M_0^\ast$
whose intersections with $\unitball (M_0^\ast)$ give a
\wstar open base and such that $\card{\scrO}=\tau$.
Put \,$\widetilde\scrO=\{q^{-1}(U)\!: U\iin\scrO\,\}$.
Now take \,$h\in (\widetilde M_1\bigr)^\circ\subseteq M_1^\circ$
obtained from Lemma~\ref{lemma:dense}, applied to $\widetilde\scrO$ \,and
$\widetilde M_2 = \widetilde M_0 \oplus \widetilde M_1$\,.
It follows immediately that the projected orbit
\,$p_0\bigl(\delta({G}) \odot h\bigr)$
intersects every set from~$\scrO$ and as above this implies
\,$p_2\bigl(\cl{\delta(G)} \odot h\bigr) \supseteq
p_2\bigl(\unitball (M_1^\circ)\bigr)$.
The conclusion on $Z_t$ follows now from Lemma~\ref{lemma:centresubsp2}.
\end{proof}\vspace{1mm}

\section{Separation of singular measures}\vspace{1mm}
    \label{sec:singularSeparation}

Let $G$ be a locally compact group.
Then \,$\sMeasG\conv G\subseteq\sMeasG$
and \,$\aMeasG\conv G\subseteq\aMeasG$.
In view of the Lebesgue decomposition~\cite[19.20]{Hewitt1963aha},
we have \,$\MeasG=\sMeasG\oplus\aMeasG$.
Lemma~\ref{lemma:dense} and the other results of
Section~\ref{sec:factorization}
apply whenever every measure in $\sMeasG$ is $\card G$\,-thin (recall from
the introduction that
$\card G=d(\MeasG)$ holds for all infinite groups $G$);
in this section we show that this is the case when $G$ is metrizable
or more generally, when $\card{G}=2^{\aleph_0}\kappa(G)$. Further applications
will follow for the subspaces $\sMeas(G,K)$ of $\MeasG$.

Versions of the following lemma are well-known.
Saks~(\cite{Saks1937toi},~III.11) gives a proof for $G=\Rgroup^n$
and provides references to original sources.
A stronger version for $G=\Tgroup$
(where $\Tgroup=\Rgroup/{\mathbb Z}$ is the circle group) is proved by
Prokaj~\cite[Th.\,1]{Prokaj2003csm}.
If $G$ is a discrete group then $\sMeasG$ is the null space $\{0\}$.
Thus the lemma is of interest only for non-discrete groups,
although formally it holds for discrete groups as well.

\begin{lemma}
    \label{lemma:smalltranslation}
Let $G$ be a locally compact group.
If \,$\mu\iin\sMeasG$ and $U$ is any compact neighbourhood of $e_G$ then
\,$\mu\perp(\mu\conv x )$ for $\lambda_G$-almost all $x$ in $U$.
\end{lemma}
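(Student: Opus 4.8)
The plan is to reduce everything to a single application of Tonelli's theorem. First I would normalise: since $\mu\perp\nu$ holds exactly when $\abs{\mu}\perp\abs{\nu}$, and since right translation $\mu\mapsto\mu\conv x$ is pushforward under the homeomorphism $s\mapsto sx$ (so that $\abs{\mu\conv x}=\abs{\mu}\conv x$, and $\abs{\mu}$ is again singular to $\lambda_G$), I may assume throughout that $\mu\ge 0$.

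Next, because $\mu\in\sMeasG$, I would fix a Borel set $N\subseteq G$ with $\lambda_G(N)=0$ on which $\mu$ is concentrated, i.e.\ $\mu(G\setminus N)=0$; such an $N$ exists by the definition of mutual singularity. Recalling the convention $\mu\conv x(C)=\mu(Cx^{-1})$, we have $(\mu\conv x)(N)=\mu(Nx^{-1})$. The whole proof then hinges on one elementary \emph{one-sided} observation: if $\mu(Nx^{-1})=0$, then $\mu\conv x$ assigns no mass to $N$ and is therefore concentrated on $G\setminus N$, whereas $\mu$ lives on $N$; as $N$ and $G\setminus N$ are disjoint, this already forces $\mu\perp(\mu\conv x)$. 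Hence it is enough to prove that $\mu(Nx^{-1})=0$ for $\lambda_G$-almost every $x\in U$.

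To see this I would estimate $\int_U \mu(Nx^{-1})\,d\lambda_G(x)$. Writing $\mu(Nx^{-1})=\int_G \mathbf{1}_N(yx)\,d\mu(y)$ (since $y\in Nx^{-1}\Leftrightarrow yx\in N$) and noting that $(x,y)\mapsto yx$ is continuous, so that $(x,y)\mapsto\mathbf{1}_N(yx)$ is Borel, Tonelli's theorem applies to the finite measures $\mu$ and $\lambda_G|U$ and permits the interchange
\[
\int_U \mu(Nx^{-1})\,d\lambda_G(x)
=\int_G\Big(\int_U \mathbf{1}_N(yx)\,d\lambda_G(x)\Big)\,d\mu(y)
=\int_G \lambda_G\big(U\cap y^{-1}N\big)\,d\mu(y).
\]
For each fixed $y$ the integrand is at most $\lambda_G(y^{-1}N)$, and left translation sends $\lambda_G$-null sets to $\lambda_G$-null sets (valid whether $\lambda_G$ is left or right Haar measure, since the two are mutually absolutely continuous); thus $\lambda_G(y^{-1}N)=0$ and the double integral vanishes. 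As the integrand $\mu(Nx^{-1})$ is nonnegative, it is zero for $\lambda_G$-almost all $x\in U$, and the sufficient condition above yields $\mu\perp(\mu\conv x)$ for $\lambda_G$-almost all $x\in U$.

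The computation is genuinely short, so I expect no real obstacle beyond three routine measure-theoretic checks: the reduction to $\mu\ge0$, the joint Borel measurability that legitimises Tonelli, and---the only slightly delicate point---the invariance of Haar-null sets under left translation regardless of the handedness of $\lambda_G$. The one substantive input is that a measure singular to Haar measure is carried by a Haar-null Borel set, which is precisely the meaning of $\mu\in\sMeasG$; once $N$ is chosen, the rest is bookkeeping.
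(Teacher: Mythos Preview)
Your proof is correct and essentially identical to the paper's: both reduce to $\mu\ge 0$, fix a Haar-null carrier set for $\mu$ (the paper calls it $E$ and takes it to be a $\mathsf{G_\delta}$), and apply Fubini/Tonelli to $\int_U (\mu\conv x)(E)\,d\lambda_G(x)$ to conclude it vanishes, whence $\mu\perp(\mu\conv x)$ almost everywhere. The only cosmetic differences are that the paper first passes to an open $\sigma$-compact subgroup and chooses a $\mathsf{G_\delta}$ carrier so as to cite \cite[Thm.\,13.9]{Hewitt1963aha} directly, whereas you invoke Tonelli via finiteness of $\mu$ and $\lambda_G|_U$; the substance is the same.
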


\Proof
Since the support of $\mu$ is always $\sigma$-compact, we may (replacing
$G$ by some open subgroup) assume $\sigma$-compactness of $G$\,.
Put $\lambda=\lambda_G$\,. Since $\mu\perp\lambda$ if and only if
$\abs{\mu}\perp\lambda$\,,
we may assume that $\mu\geq 0$ and $\mu\neq0$.

There is a $\mathsf{G_\delta}$\,-set $E\subseteq G$ such that
$\mu(G\setminus E)=0$ and $\lambda(E)=0$. Define
\[
f\!:G\times G\rightarrow[0,1]\qquad \text{ by }\qquad
f(x,y) = \left\{ \begin{array}{ll}
                 0 & \mathrm{if}\ \, yx\notin E                 \\
                 1 & \mathrm{if}\  \, yx\in E
              \end{array}
      \right.
\]
As $E$ is a $\mathsf{G_\delta}$\,-set, the function $f$ is Borel measurable on
$G\times G$\,.

Then \,$0\leq \int_U f(x,y) d\lambda(x) = \lambda(U\cap\,y^{-1} E) \leq
\lambda(y^{-1} E)=0$
\,for every $y\iin G$, because $\lambda(E)=0$.
On the other hand, we have
\,$\int_G f(x,y) d\mu(y) = \mu(E x^{-1} ) = \mu\conv x(E)$ \,for every
$x\iin G$.
Now apply Fubini's theorem for non-negative functions (being valid in the
$\sigma$-compact case  \cite[Thm.\,13.9]{Hewitt1963aha}) to get
\[
\int_U \mu\conv x\,(E) \, d\lambda(x)
= \int_U \int_G f(x,y) \, d\mu(y) \, d\lambda(x)
= \int_G \int_U f(x,y) \, d\lambda(x) \, d\mu(y)
= 0 \; .
\]
This implies that \,$\mu\conv x(E)=0$
for $\lambda$-almost all $x$ in~$U$.
Clearly, $\mu\perp(\mu\conv x )$ for every~$x$ for which $\mu\conv x(E)=0$.
\end{proof}

\begin{lemma}
    \label{lemma:compapprox}
Let $G$ be any locally compact group.
Let $\mu\iin\MeasG$, $\varepsilon>0$, and let $H\subseteq G$ be a countable
set such that the measures \,$\mu\conv h$ are pairwise mutually
singular for $h \iin H$. Then there is a compact set $C\subseteq G$ such that
\,$\abs{\mu}(G\setminus C) < \varepsilon$ and the sets
\,$C h$  are pairwise disjoint for $h \iin H$.
\end{lemma}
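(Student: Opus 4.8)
The plan is to realize $C$ as a compact subset of a single Borel set on which $\abs{\mu}$ is entirely concentrated and whose right translates by the elements of $H$ are automatically disjoint; the compactness and the mass bound will then follow from inner regularity of $\abs{\mu}$. First I would enumerate $H=\{h_n\}$ (the statement is non-trivial only when $H$ is countably infinite) and recall that right translation is a homeomorphism, so that $\abs{\mu\conv h_n}(A)=\abs{\mu}(Ah_n^{-1})$ for every Borel set $A$. In particular, ``$\abs{\mu\conv h_n}$ is concentrated on a Borel set $B$'' is equivalent to ``$\abs{\mu}$ is concentrated on $Bh_n^{-1}$''. Since $\mu\conv h_m\perp\mu\conv h_n$ for $m\neq n$, the same mutual singularity holds for the total variations $\abs{\mu\conv h_n}$.

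The key step is to upgrade pairwise mutual singularity to one family of pairwise \emph{disjoint} concentration sets. For each pair $m\neq n$ I would choose a Borel set $S_{mn}$ with $\abs{\mu\conv h_m}(G\setminus S_{mn})=0$ and $\abs{\mu\conv h_n}(S_{mn})=0$, arranged so that $S_{nm}=G\setminus S_{mn}$. Setting $B_n=\bigcap_{m\neq n}S_{nm}$ (a countable intersection of Borel sets, hence Borel), the subadditivity estimate $\abs{\mu\conv h_n}(G\setminus B_n)\le\sum_{m\neq n}\abs{\mu\conv h_n}(G\setminus S_{nm})=0$ shows that $\abs{\mu\conv h_n}$ is concentrated on $B_n$, while $B_n\subseteq S_{nm}$ and $B_m\subseteq S_{mn}=G\setminus S_{nm}$ force $B_m\cap B_n=\emptyset$. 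Translating back, $\abs{\mu}$ is concentrated on each $E_n:=B_nh_n^{-1}$, hence on the Borel set $E:=\bigcap_n E_n$, so that $\abs{\mu}(G\setminus E)=0$. The payoff is that $Eh_n\subseteq E_nh_n=B_n$; therefore, for \emph{any} subset $C\subseteq E$, the translates $Ch_n\subseteq B_n$ are pairwise disjoint, and the disjointness requirement is met for free.

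It then remains only to make $C$ compact. Since $\abs{\mu}$ is a finite Radon measure it is inner regular on Borel sets, so I would pick a compact $C\subseteq E$ with $\abs{\mu}(E\setminus C)<\varepsilon$; combined with $\abs{\mu}(G\setminus E)=0$ this gives $\abs{\mu}(G\setminus C)<\varepsilon$, while the previous paragraph guarantees that the sets $Ch$ ($h\in H$) are pairwise disjoint. The one point demanding care is the second step: mutual singularity is assumed only pairwise, and one must check that the intersection construction simultaneously yields genuinely disjoint Borel concentration sets for \emph{all} $n$ at once — this is exactly where the symmetric choice $S_{nm}=G\setminus S_{mn}$ and the countability of $H$ are used. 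The rest is routine measure theory.
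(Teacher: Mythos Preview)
Your proof is correct and follows essentially the same route as the paper: construct pairwise disjoint Borel concentration sets $B_n$ for the translates $\mu\conv h_n$, pull them back to $E=\bigcap_n B_n h_n^{-1}$ on which $\abs{\mu}$ is fully concentrated, and invoke inner regularity to extract a compact $C\subseteq E$. The paper simply asserts the existence of the disjoint family $\{E_h\}$ (your $\{B_n\}$) without writing out the intersection argument, but the structure is identical.
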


\Proof
Using singularity, we get
pairwise disjoint sets $E_h \subseteq G$ ($h\iin H$) such that
$\mu\conv h$ is concentrated on $E_h$  for all $h\iin H$
and \,$\abs{\mu\conv h'}(E_h ) = 0$ for all $h,h'\iin H$ with $h\neq h'$.

Put \,$E= \bigcap_{h\in H} E_h h^{-1} $.
Since \,$\abs{\mu}(G \setminus E_h h^{-1}) =
\abs{\mu\conv h}(G\setminus E_h ) = 0$,
it follows that $\abs{\mu}(G\setminus E)=0$.
Thus there is a compact set $C\subseteq E$
such that \,$\abs{\mu}(G\setminus C) < \varepsilon$.
The sets $C h$ are pairwise disjoint for $h \iin H$ because
\,$Ch \subseteq E_h$\,.
\end{proof}

\begin{Rem}
All that is actually needed in Lemma \ref{lemma:compapprox} is that
$H$ has cardinality less than the additivity of the measure $\mu$
--- the least cardinal of a family of null sets whose union is not a null set.
\end{Rem}

In the next proof (and further on in Theorem~\ref{th:separationss}), the
formalism of ordinals will be used in the way developed by von Neumann ---
in particular, every ordinal is equal to the set of its predecessors.
For example, this means that for ordinals $\alpha$ and~$\beta$ the assertions
$\alpha < \beta$\,, $\alpha \subsetneq \beta$ and $\alpha \in \beta$ have the
same meaning. In particular, for $n\in \Nnat=\{ 0,1, 2, \ldots\}$, the
equality $n = \{ 0,1, 2, \ldots, n-1\}$ holds.
\\[1mm]
The following result was proved by Prokaj~\cite[Theorem 10]{Prokaj2003csm}
for $G=\Rgroup$\,.

\begin{theorem}
    \label{th:perfectset}
Let $G$ be any non-discrete locally compact group, $\mu\in\sMeasG$.
Then there exists a $\mathsf{K_\sigma}$-set
$E\subseteq G$ and a set $P\subseteq G$ such that $\mu$ is concentrated
on~$E$\,,
$\card{P}=2^{\aleph_0}\kappa(G)$ and \,$(Ep)\cap(Ep')=\emptyset$ for all
$p,p'\iin P$ with $p\neq p'$.
Thus, every $\mu\iin\sMeasG$ is \,$2^{\aleph_0}\kappa(G)$\,-thin.
\end{theorem}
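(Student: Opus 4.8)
The plan is to build the set $P$ and the $\mathsf{K_\sigma}$ set $E$ so that the translates $Ep$ ($p\in P$) are \emph{literally} pairwise disjoint; since $\mu\conv p$ is concentrated on $(\mathrm{supp}\,\mu)\,p\subseteq Ep$, disjointness of the $Ep$ forces $\mu\conv p\perp\mu\conv p'$, which is exactly $\tau$-thinness for $\tau=\card P$. Two reductions come first: replacing $\mu$ by $\abs\mu$ I may assume $\mu\ge0$, $\mu\neq0$ (since $\mu\perp\lambda_G\iff\abs\mu\perp\lambda_G$ and all the relevant properties pass between $\mu$ and $\abs\mu$), and since $\mathrm{supp}\,\mu$ is $\sigma$-compact I fix an open $\sigma$-compact subgroup $G_0\supseteq\mathrm{supp}\,\mu$ (generated by $\mathrm{supp}\,\mu$ together with a compact neighbourhood of $e_G$).

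Next I would separate the two factors of the target cardinal $2^{\aleph_0}\kappa(G)$. The factor $\kappa(G)$ is cheap: once I have produced, inside $G_0$, a set $P_0$ with $\card{P_0}=2^{\aleph_0}$ and a $\mathsf{K_\sigma}$ set $E\subseteq G_0$ carrying $\mu$ such that the copies $Ep$ ($p\in P_0$) are pairwise disjoint, I choose right coset representatives $\{g_j:j\in J\}$ of the open subgroup $G_0$ in $G$, so that $\card J=\kappa(G)$ and the cosets $G_0g_j$ are pairwise disjoint. Putting $P=\{pg_j:p\in P_0,\ j\in J\}$ and noting $Ep\subseteq G_0$, the copy $Epg_j$ lies in $G_0g_j$; thus copies with different $j$ are separated by disjoint cosets and copies with the same $j$ by the disjointness already present in $P_0$. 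The map $(p,j)\mapsto pg_j$ is injective, so $\card P=2^{\aleph_0}\kappa(G)$. Hence everything reduces to the $\sigma$-compact group $G_0$ and to producing the continuum-sized $P_0$ and the single full-mass set $E$.

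The core is a Cantor (fusion) scheme indexed by $2^{<\omega}$. Reformulated, I must build $E$ (carrying $\mu$) and $P_0$ with $E^{-1}E\cap\{p'p^{-1}:p\neq p'\}=\emptyset$. I would construct, by induction on $n$, compact pieces assembling to $E$ together with a binary tree of translation elements $p_s$ ($s\in2^n$) whose finite-level translated copies are pairwise disjoint. Lemma~\ref{lemma:smalltranslation} drives the branching: at each split it supplies an \emph{arbitrarily small} translation $x$ with $\mu\perp\mu\conv x$ (the admissible $x$ form a $\lambda_G$-conull set in any prescribed neighbourhood), creating a new mutually singular direction; Lemma~\ref{lemma:compapprox} then converts the finitely many pairwise singular translates at each level into pairwise \emph{disjoint} compact translated copies, discarding only mass $<\varepsilon_n$ with $\varepsilon_n\to0$, so that the union $E$ of the retained pieces is $\mathsf{K_\sigma}$ with $\mu(G_0\setminus E)=0$. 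Each branch $b\in2^\omega$ is intended to determine a translation parameter $p_b$, with distinct branches giving disjoint copies $Ep_b$, whence $P_0=\{p_b:b\in2^\omega\}$.

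The hard part is twofold. First, the fusion must reconcile two opposing demands: $E$ must be enlarged at every stage to capture all of $\mu$, yet the copies must stay disjoint for the ever-growing tree, so each newly chosen increment has to be small enough not to destroy disjointness secured at earlier stages. This is the usual diagonal balancing, made quantitative through the overflow bound $\varepsilon_n$ of Lemma~\ref{lemma:compapprox} and the conull freedom in Lemma~\ref{lemma:smalltranslation}. Second, and more delicate, $G_0$ need not be metrizable, so the branch parameters $p_b=\lim_n p_{b|n}$ need not exist as genuine sequential limits. I expect to resolve this by choosing a decreasing sequence of compact symmetric neighbourhoods $V_n$ of $e_G$ with $V_{n+1}V_{n+1}\subseteq V_n$, taking all increments from the corresponding $V_n$: their intersection is a compact subgroup $N$, the quotient $G_0/N$ is metrizable (Kakutani--Kodaira), and the projected parameters converge there. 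Carrying out all singularity arguments upstairs in $G_0$ while using $G_0/N$ only to pin down and distinguish the continuum-many parameters is, I anticipate, the technical crux of the proof.
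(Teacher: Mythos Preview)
Your overall plan is the paper's plan: reduce to a nonnegative measure supported in an open $\sigma$-compact subgroup, run a Cantor scheme driven by Lemma~\ref{lemma:smalltranslation} to create mutually singular translates, use Lemma~\ref{lemma:compapprox} to turn singularity into genuine disjointness of compact carriers at each finite stage, and finally multiply by right coset representatives to gain the factor $\kappa(G)$. So far so good, and essentially identical to the paper.

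The divergence is in your treatment of the non-metrizable case, and there the proposal has a real gap. You want to pick increments in shrinking $V_n$, pass to the quotient $G_0/N$ with $N=\bigcap_n V_n$, take the branch limits $\bar p_b$ there, and then lift. But the disjointness you have secured at each finite stage is for the sets $C_n p_s$ \emph{upstairs} in $G_0$; the carrier $E$ you build is not $N$-right-invariant (and $N$ need not even be normal), so replacing the finite-level $p_s$ by an arbitrary lift of $\bar p_b$ destroys the link between the finite-stage disjointness and the final claim $Ep_b\cap Ep_{b'}=\emptyset$. Working entirely downstairs instead does not help either, since the image of a singular measure under $G_0\to G_0/N$ can be absolutely continuous, so Lemma~\ref{lemma:smalltranslation} is not available there.

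The paper sidesteps convergence altogether. Along with the increments $x_j$ and carriers $C_j$ it records a decreasing sequence of compact neighbourhoods $U_j$ of $e_G$ and strengthens the finite-stage disjointness to a \emph{thickened} version,
\[
(C_j\,d_*\,U_{j+1})\cap(C_j\,d'_*\,U_{j+1})=\emptyset\qquad(d\neq d',\ d,d'\subseteq j),
\]
with the nesting $U_{j+1}\cup x_jU_{j+1}\subseteq U_j$. For each branch $d\subseteq\Nnat$ the sets $(d\cap n)_*\,U_n$ are then nested and compact, so $K(d)=\bigcap_n(d\cap n)_*\,U_n$ is nonempty; any element of $K(d)$ serves as $p_d$, and the thickened condition immediately gives $Ep_d\cap Ep_{d'}=\emptyset$. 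No limits, no quotient, no lifting --- just compactness. This is the missing idea in your sketch: build the $U_n$-fattening into the disjointness condition from the start, so that the branch parameter can be \emph{any} point of a nested compact intersection rather than a limit that may not exist.
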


\noindent
In particular, we recover the result of \cite{Larsen}: \ if \,$\mu\iin\MeasG$
and the orbit $\{\mu\conv x\!: x\iin G\}$ is (norm-)\,separable, then
$\mu\iin\aMeasG$. See also \cite{Glick} for related results.
In fact, this shows once again that in the previous section our permanent
assumption ``$d(M_0)$ uncountable" makes no restriction (even when $M_0$ is
not a sublattice), as long as singular measures are considered.
\\
If $G$ is metrizable, then (as in \cite{Prokaj2003csm}) the set $P$
can be chosen to be perfect (in the proof below, just add the requirement
that the diameter of the sets $U_j$ be less than $1/j$ for $j>0$).

\Proof
For $x_j\iin G$\,  and a finite subset $d=\{j_0,\dots,j_l\}$
of $\Nnat$\,, where $j_0<j_1<\dots$\,, we define
$d_*=x_{j_0}\cdots x_{j_l}$  (with $d_*=e_G$ when $d$ is empty).\vspace{.5mm}

Let $\mu\iin\sMeasG$ and assume, without loss of generality, that
$\mu\geq0$ and $\mu\neq0$.
For $j\iin\Nnat$\,, we will construct by induction
\begin{itemize}
\item
compact neighbourhoods $U_j$ of $e_G$
\item
compact sets $C_j \subseteq G$
\item
elements $x_j \iin G$,
\end{itemize}
so that the following conditions $(1^\circ) - (5^\circ)$
hold\,:
\item[$(1^\circ)$]
$(\mu \conv  d_*) \perp ( \mu \conv d'_* ) $ \
whenever $d$ and $d'$ are distinct subsets of  $j$\,,
\item[$(2^\circ)$]
$\mu(G\setminus C_j) \leq 2^{-j}$\,,
\item[$(3^\circ)$]
$( C_j \: d_*\, U_{j+1} ) \;\cap\; ( C_j \: d'_*\, U_{j+1} ) = \emptyset $
\ whenever $d$ and $d'$ are distinct subsets of  $j$\,,
\item[$(4^\circ)$]
$ U_{j+1} \cap (x_j U_{j+1} ) = \emptyset $\,,
\item[$(5^\circ)$]
$ U_{j+1} \cup ( x_j U_{j+1} ) \subseteq U_{j} $\,.\vspace{1mm}

Let $U_0$ be any compact neighbourhood of $e_G$\,.
Let $n \iin \Nnat$ be such that $U_n$ has been constructed as well as $C_j$
and $x_j$ for all $j< n$ so that $(1^\circ) - (5^\circ)$ hold.

If $n=0$, put $\nu=\mu$\,, otherwise
$ \nu = \sum_{d\subseteq n}\mu \conv d_* $\,.
Note that $\nu\iin\sMeasG$
and, by Lemma~\ref{lemma:smalltranslation}, there is $x_n$ in  the interior of
$U_n$ such that $\nu\perp(\nu\conv x_n)$\,.
From that and from $(1^\circ)$ for $j<n$ it follows that  $(1^\circ)$ holds
for $j=n + 1$ as well.

By Lemma~\ref{lemma:compapprox} there is a compact set
$C_n \subseteq G$ such that $(2^\circ)$
holds for $j=n$\, and
\[
( C_n \, d_*)\, \cap\, ( C_n \, d'_* )\; =\; \emptyset
\mathrm{\quad \ for\ distinct \;\;} d,d'\subseteq n+1\,.
\]
Thus $ \{ C_n \, d_* \!: d \subseteq{n+1} \} $
is a finite family of pairwise disjoint compact sets,
and there is a neighbourhood $W$ of $e_G$ such that
\[
( C_n \, d_*\, W ) \;\cap\; ( C_n \, d'_*\, W )\; =\; \emptyset
\mathrm{\quad \ for\ distinct \;\;} d,d'\subseteq n+1\,.
\]
Since $x_n$ is in the interior of $U_n$ and $x_n \neq e_G$\,,
there is a compact neighbourhood $U_{n+1}$ of $e_G$ such that
$U_{n+1} \subseteq W$ and $(4^\circ)$ and $(5^\circ)$ hold with $j=n$.
Since $U_{n+1} \subseteq W$, we get also $(3^\circ)$ for $j=n$.
The construction of $U_j$, $C_j$ and $x_j$
satisfying $(1^\circ) - (5^\circ)$ is complete.\vspace{1mm}

Next, for each $n\iin \Nnat$ define \,$E_n = \bigcap_{j = n}^\infty C_j$
and put $E = \bigcup_n E_n$\,.
Then $E$ is a $\mathsf{K_\sigma}$-set and $\mu(G\setminus E)=0$.
For  $d\subseteq\Nnat$ \,(not necessarily finite), we define
\;$K(d)=\bigcap_{n=1}^\infty \;  (d\cap n)_\ast\,U_n \ . $

Note (since $(5^\circ)$ implies
$(d\cap n)_\ast\,U_n\subseteq(d\cap m)_\ast\,U_m$ for $m<n$\,) that
$K(d)\subseteq U_0$ is nonempty, being the
monotone intersection of compact sets.
From $(4^\circ)$ it follows $K(d)\cap K(d')=\emptyset$ for $d\neq d'$.
Form $P_0$ by taking one element in each $K(d)$.
Thus $\card{P_0}=2^{\aleph_0}$. Let $H$ be the subgroup generated
by $E\,U_0$\,. We take $P_1$ a set of representatives for the right cosets in
$G$ (with respect to $H$) and put $P=P_0P_1$. Since $H$ is open and
$\sigma$-compact,
we have $\card{P_1}=\kappa(G)$ when $G$ is not $\sigma$-compact. Then
$\card{P}=2^{\aleph_0}\kappa(G)$ follows for every $G$.
It remains to be proved that $(Ep)\cap(Ep')=\emptyset$ for
$p,p'\iin P_0$ with $p\neq p'$.

Take $e,e'\iin E$ and $p,p'\iin P_0$, $p\neq p'$.
By the definition of $E$ and $P_0$, we have $e,e' \iin E_n \subseteq C_n$ for
some~$n$,
and $p\in\!K(d)$, $p'\iin K(d')$ for some distinct subsets $d,d'$ of~$\Nnat$.
Let $j\geq n$ be such that \,$d\cap j\,\neq\, d' \cap j$.
Since \,$p\in (d\cap j)_\ast U_{j + 1}$\,,
$p'\iin (d'\cap j)_\ast U_{j + 1}$
and $e,e'\iin E_n \subseteq C_j$\,,
we get from $(3^\circ),\ ep \neq e'p'$.
\end{proof}\vspace{-1mm}
\noindent
(For $\kappa(G) > 2^{\aleph_0}$, the inductive construction is in fact
redundant [if one does not need that $E$ be perfect],
just take $E$ to be the support of $\mu\,,\ P_0=\{e_G\}$ and
$P_1,P$ as above\,).
\vspace{1mm}

\section{Proof of the Main Theorem --- metrizable case}\vspace{1mm}
    \label{sec:caseI}

\begin{lemma}
    \label{lemma:Lonecase}
Let $G$ be a locally compact group.
Then we have
$$Z_t\bigl(\MeasG^{\ast\ast}\bigr)\cap\aMeasG^{\ast\ast}\subseteq\; \MeasG.$$
\end{lemma}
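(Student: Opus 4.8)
The plan is to reduce the statement to the Lau--Losert theorem \cite{LauLosert} that $\aMeasG=L^1(G)$ is strongly Arens irregular, that is, $Z_t\bigl(\aMeasG^{\ast\ast}\bigr)=\aMeasG$. The key structural fact is that $\aMeasG$ is a closed two-sided ideal of $\MeasG$, so that its bidual embeds as a \wstar closed ideal of $\MeasG^{\ast\ast}$; the task is then to transfer membership in the topological centre from the large algebra down to this ideal and invoke Lau--Losert.

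First I would collect the relevant identifications. As recalled at the start of Section~\ref{sec:dirsums}, $\aMeasG^{\ast\ast}$ is identified with the \wstar closed subspace $\aMeasG^{\circ\circ}\subseteq\MeasG^{\ast\ast}$; since every functional in $\aMeasG^\circ$ annihilates $\aMeasG^{\circ\circ}$, the restriction of $\sigma(\MeasG^{\ast\ast},\MeasG^\ast)$ to $\aMeasG^{\circ\circ}$ is exactly the intrinsic \wstar topology $\sigma(\aMeasG^{\ast\ast},\aMeasG^\ast)$, so the embedding is a \wstar homeomorphism onto its image. Next I record two compatibility facts, both read off from the usual iterated-limit description of $\squ$ (approximate $\mathfrak m$ and $\mathfrak n$ by nets and use \wstar continuity of the product in its first variable). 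Because $\aMeasG$ is a right ideal, a convolution product $\mu\conv\nu$ with $\mu\iin\aMeasG$ and $\nu\iin\MeasG$ stays in $\aMeasG$, whence $\aMeasG^{\circ\circ}$ is a left ideal of $(\MeasG^{\ast\ast},\squ)$; in particular $\mathfrak m\squ\mathfrak n\iin\aMeasG^{\circ\circ}$ whenever $\mathfrak m\iin\aMeasG^{\circ\circ}$. Taking both approximating nets inside $\aMeasG$ and using the \wstar homeomorphism above shows moreover that the embedding $\aMeasG^{\ast\ast}\hookrightarrow\MeasG^{\ast\ast}$ is a homomorphism for the first Arens products, i.e.\ the intrinsic product $\squ$ of $\aMeasG^{\ast\ast}$ is the restriction of $\squ$ on $\MeasG^{\ast\ast}$.

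With these in hand, take $\mathfrak m\iin Z_t\bigl(\MeasG^{\ast\ast}\bigr)\cap\aMeasG^{\ast\ast}$. By the definition of $Z_t$, the map $\mathfrak n\mapsto\mathfrak m\squ\mathfrak n$ is \wstar continuous on all of $\MeasG^{\ast\ast}$; restricting it to the \wstar closed set $\aMeasG^{\circ\circ}$ --- on which the induced \wstar topology is the intrinsic one, and into which it maps by the left-ideal property --- produces a \wstar continuous self-map of $\aMeasG^{\ast\ast}$ which, by the homomorphism fact, coincides with $\mathfrak n\mapsto\mathfrak m\squ\mathfrak n$ computed intrinsically. Hence $\mathfrak m\iin Z_t\bigl(\aMeasG^{\ast\ast}\bigr)$, and the Lau--Losert theorem gives $\mathfrak m\iin\aMeasG\subseteq\MeasG$, as required.

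The genuinely routine parts here are the two iterated-limit computations. The one step that must be handled with care --- and which I regard as the crux --- is the simultaneous matching of the two \wstar topologies and of the two first Arens products under the identification $\aMeasG^{\ast\ast}\cong\aMeasG^{\circ\circ}$; without it the passage of topological-centre membership from $\MeasG^{\ast\ast}$ down to the ideal $\aMeasG^{\ast\ast}$ would not be justified, and the reduction to Lau--Losert would break down.
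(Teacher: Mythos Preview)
Your proposal is correct and follows essentially the same route as the paper: reduce to the inclusion $Z_t\bigl(\MeasG^{\ast\ast}\bigr)\cap\aMeasG^{\ast\ast}\subseteq Z_t\bigl(\aMeasG^{\ast\ast}\bigr)$ and then invoke Lau--Losert. The paper compresses into the phrase ``elementary arguments'' exactly the compatibility of \wstar topologies and Arens products that you spell out; one small terminological slip --- you call $\aMeasG^{\circ\circ}$ a \emph{left} ideal while the property you actually state and use ($\mathfrak m\squ\mathfrak n\in\aMeasG^{\circ\circ}$ whenever $\mathfrak m\in\aMeasG^{\circ\circ}$) is the \emph{right} ideal condition --- does not affect the argument.
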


\Proof
Since $\aMeasG$ is a subalgebra of $\MeasG$, it follows by elementary
arguments that
\,$Z_t\bigl(\MeasG^{\ast\ast}\bigr)\cap\aMeasG^{\ast\ast}\subseteq
Z_t\bigl(\aMeasG^{\ast\ast}\bigr)$. Now one can apply the theorem, due to Lau
and Losert \cite[Theorem 1]{LauLosert}
that \,$Z_t\bigl(\aMeasG^{\ast\ast}\bigr)=\aMeasG$.
\end{proof}

\begin{theorem}\label{th:caseI}
Let $G$ be a locally compact group, $K$ a compact subgroup such that
$\card{G/K}\le 2^{\aleph_0}\kappa(G)$.
Then \,$Z_t\bigl(\MeasG^{\ast\ast}\bigr)\cap\Meas(G/K)^{\ast\ast}\subseteq
\Meas(G/K)$.
\end{theorem}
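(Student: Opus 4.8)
The plan is to realise the statement as a single application of the subspace factorisation theorem (Theorem~\ref{th:factorizationsub}), splitting $\Meas(G/K)$ into its singular and absolutely continuous parts, and then to strip off the absolutely continuous part using the Lau--Losert input recorded in Lemma~\ref{lemma:Lonecase}. Concretely, I would put $M_0=\sMeas(G,K)$, $M_1=\aMeas(G,K)$ and $M_2=\Meas(G/K)$, so that $M_2=M_0\oplus M_1$ is the Lebesgue decomposition of $\Meas(G/K)$ with respect to $\lambda_{G/K}$, a topological direct sum of closed subspaces of $\MeasG$. The key structural observation — and the reason the plain factorisation Theorem~\ref{th:factorization} does not apply here — is that $K$ is not assumed normal, so none of $M_0,M_1,M_2$ need be $G$-invariant. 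This is exactly the situation the refined hypothesis of Theorem~\ref{th:factorizationsub} was designed to cover: I would take the $G$-invariant envelope $\widetilde M_2=\MeasG=\sMeasG\oplus\aMeasG=\widetilde M_0\oplus\widetilde M_1$, which is $G$-invariant by the opening remarks of Section~\ref{sec:singularSeparation}, and note $M_0=\sMeasG\cap\Meas(G/K)\subseteq\widetilde M_0$ and $M_1=\aMeasG\cap\Meas(G/K)\subseteq\widetilde M_1$.

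Next I would verify the remaining hypotheses with $\tau=2^{\aleph_0}\kappa(G)$. This $\tau$ is uncountable, and $d(M_0)\le d(\Meas(G/K))=\card{G/K}\le\tau$ by the standing hypothesis (density is monotone for subspaces of a metric space), so $\tau\ge d(M_0)$. Since $M_0\subseteq\sMeasG$, every $\mu\iin M_0$ is $2^{\aleph_0}\kappa(G)$-thin by Theorem~\ref{th:perfectset} (when $G$ is discrete, $\sMeasG=\{0\}$ and this is vacuous). Finally, $M_0$ is a sublattice: by the description following Lemma~\ref{Identify}, $\sMeas(G,K)$ consists of the measures $\mu\iin\Meas(G/K)$ with $\mu\perp\lambda_{G/K}$, and these are closed under $\mu\mapsto\abs{\mu}$, so $\mu\iin M_0$ implies $\abs{\mu}\iin M_0$. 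Theorem~\ref{th:factorizationsub} then yields
\[
Z_t\bigl(\MeasG^{\ast\ast}\bigr)\cap\Meas(G/K)^{\ast\ast}\subseteq\sMeas(G,K)\oplus\aMeas(G,K)^{\ast\ast}.
\]

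It remains to show that the absolutely continuous component genuinely lies in $\Meas(G/K)$. Given $\mathfrak m$ in the left-hand set, write $\mathfrak m=\sigma+\mathfrak a$ with $\sigma\iin\sMeas(G,K)\subseteq\MeasG$ and $\mathfrak a\iin\aMeas(G,K)^{\ast\ast}$. Since $\sigma\iin\MeasG\subseteq Z_t(\MeasG^{\ast\ast})$ and $Z_t$ is a linear subspace, $\mathfrak a=\mathfrak m-\sigma\iin Z_t(\MeasG^{\ast\ast})$; and $\aMeas(G,K)^{\ast\ast}\subseteq\aMeasG^{\ast\ast}$ because $\aMeas(G,K)$ is a closed subspace of $\aMeasG$. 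Hence $\mathfrak a\iin Z_t(\MeasG^{\ast\ast})\cap\aMeasG^{\ast\ast}$, so $\mathfrak a\iin\MeasG$ by Lemma~\ref{lemma:Lonecase}. As $\aMeas(G,K)^{\ast\ast}\cap\MeasG=\aMeas(G,K)$ (a closed subspace meets the ground space in itself), we get $\mathfrak a\iin\aMeas(G,K)\subseteq\Meas(G/K)$ and therefore $\mathfrak m=\sigma+\mathfrak a\iin\Meas(G/K)$. The step I would expect to demand the most care is not this concluding algebra, which is routine, but the correct matching of hypotheses to Theorem~\ref{th:factorizationsub} — in particular situating the non-$G$-invariant subspaces $\sMeas(G,K),\aMeas(G,K)$ inside the $G$-invariant envelope $\sMeasG\oplus\aMeasG$, and controlling the density via $d(\sMeas(G,K))\le\card{G/K}\le 2^{\aleph_0}\kappa(G)$.
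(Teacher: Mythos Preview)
Your proof is correct and follows essentially the same approach as the paper's: the same choice of $M_0=\sMeas(G,K)$, $M_1=\aMeas(G,K)$, $M_2=\Meas(G/K)$ with the $G$-invariant envelope $\widetilde M_i=\sMeasG,\aMeasG,\MeasG$, then Theorem~\ref{th:factorizationsub} (fed by Theorem~\ref{th:perfectset}) followed by Lemma~\ref{lemma:Lonecase}. The paper additionally singles out the case $K=\{e_G\}$ and handles it via Corollary~\ref{cor:thincentre}, but this is purely expository; your unified treatment and your more detailed unpacking of the final step are fine.
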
\noindent
As mentioned in Section~\ref{sec:notation}, the assumption on $G/K$ is
satisfied if $G/K$ is\linebreak metrizable.
The case of the trivial subgroup $K=\{e_G\}$ gives the Main Theorem,
i.e., $Z_t\bigl(\MeasG^{\ast\ast}\bigr)=\MeasG$ holds when
$\card G\le 2^{\aleph_0}\kappa(G)$ \;(in particular for metrizable groups).
\Proof
Put $M_0=\sMeas(G,K)$, $M_1=\aMeas(G,K)$, $M_2=\Meas(G/K)$. If $K=\{e_G\}$,
\linebreak we can (using Theorem~\ref{th:perfectset})
apply Corollary~\ref{cor:thincentre} and get that
\ $Z_t\bigl(\MeasG^{\ast\ast}\bigr)\subseteq\linebreak
\sMeas(G)\oplus\aMeasG^{\ast\ast}$. 
Since $\MeasG\subseteq Z_t\bigl(\MeasG^{\ast\ast}\bigr)$, the conclusion
now follows from Lemma~\ref{lemma:Lonecase}.

In the general case, we take \,$\widetilde M_0=\sMeasG$,
$\widetilde M_1=\aMeasG$, $\widetilde M_2=\MeasG$ and apply now
Theorem \ref{th:factorizationsub}.
\end{proof}\vspace{1mm}

If $G$ is any locally compact group and $G_d$ denotes the same group with
discrete topology, then Theorem~\ref{th:caseI} applies to $G_1=G\times G_d$
which contains $G$ as an open subgroup. Unfortunately, we do not have a
direct argument that strong Arens irregularity of the measure algebra
carries over to open subgroups.
If one assumes that
$2^{\aleph_1} = 2^{\aleph_0}$ (which is consistent with standard set theory),
then Theorem~\ref{th:caseI} applies to groups $G=\prod\limits_{i\in I}G_i$
with $G_i$ compact metrizable and $\card I\le\aleph_1$\,, giving examples
of compact non-metrizable groups.
However, the proof of the full conjecture requires considerably more work.
Theorem~\ref{th:caseI} (for $G/K$ metrizable)
will provide the starting point of an inductive argument.

\section{Compact subgroups and some classes of measures}\vspace{1mm}
    \label{sec:compsub}

First, we will give now some formulas for the character of quotient spaces.
Then we define some classes of compact subgroups in a non-metrizable group,
corresponding classes of measures and decompositions of $\MeasG$. This
contains the strongly singular measures mentioned at the beginning.

\begin{lemma}
    \label{lemma:basicproperties}
Let $G$ be a locally compact group.\vspace{1mm}
\item[(1)]
If $H_0$ and $H_1$ are closed subgroups of $G$\,, $H_0 \supseteq H_1$\,, then
\[
\chi(G/H_1)\; =\; \max\bigl(\chi(G/H_0 ), \chi(H_0 /H_1 )\bigr) \; .
\]
\item[(2)]
If $H_i$ are closed subgroups of $G$ for $i\in I$, then
\[
\chi\bigl(\,G\big/{\textstyle\bigcap\limits_{i\in I}}H_i\,\bigr)\;\leq\;
\sup_{i\in I}\bigl(\,\chi(G/H_i)\,\bigr)+\card{I} \; .
\]
\item[(3)]
If $G$ is $\sigma$-compact, $H$ a closed subgroup,
\,$N=\bigcap_{x\in G}\,xHx^{-1}$, then
\[
\chi(G/N)\; \leq\; \chi(G/H)+\aleph_0\,.
\]
\end{lemma}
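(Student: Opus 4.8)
The plan is to treat the three parts essentially independently, using throughout that the coset spaces are homogeneous (so that $\chi(G/H)$ may be computed at the base point $eH$) and that the quotient maps $G\to G/H$ and, for $H_1\subseteq H_0$, the canonical projection $\pi\colon G/H_1\to G/H_0$ are continuous and \emph{open}.

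For (1), the inequality $\max\bigl(\chi(G/H_0),\chi(H_0/H_1)\bigr)\le\chi(G/H_1)$ is the easy half: since $\pi$ is an open continuous surjection, the $\pi$-images of a neighbourhood base at $eH_1$ form one at $eH_0$, giving $\chi(G/H_0)\le\chi(G/H_1)$; and $H_0/H_1$ is a closed subspace of $G/H_1$, whence $\chi(H_0/H_1)\le\chi(G/H_1)$. For the reverse inequality I would build a base at $eH_1$ by combining a base ``along the base'' with one ``along the fibre''. Fix a neighbourhood base $\{A_\alpha\}$ at $eH_0$ of size $\chi(G/H_0)$ and a neighbourhood base $\{B_\beta\}$ at $eH_1$ in the fibre $\pi^{-1}(eH_0)=H_0/H_1$ of size $\chi(H_0/H_1)$; extend each $B_\beta$ to an open $\widetilde B_\beta\subseteq G/H_1$ meeting the fibre in $B_\beta$, and consider $\{\pi^{-1}(A_\alpha)\cap\widetilde B_\beta\}$, a family of cardinality $\max\bigl(\chi(G/H_0),\chi(H_0/H_1)\bigr)$. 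The verification that it is a base is routine but uses openness of $\pi$: given a neighbourhood $O$ of $eH_1$, choose $\widetilde B_\beta$ with $B_\beta\subseteq O$ on the fibre, then use that $\pi$ is open to thin a ``tube'' $\pi^{-1}(A_\alpha)$ so that $\pi^{-1}(A_\alpha)\cap\widetilde B_\beta\subseteq O$.

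For (2), put $H=\bigcap_{i\in I}H_i$ and consider $\phi\colon G/H\to\prod_{i\in I}G/H_i$, $gH\mapsto(gH_i)_i$, which is injective (as $\bigcap_i H_i=H$) and continuous. A product of spaces of characters $\chi_i$ over an index set of size $\card{I}$ has character at most $\sup_i\chi_i+\card{I}$ at each point, so \emph{once $\phi$ is known to be a topological embedding}, monotonicity of the character under subspaces yields $\chi(G/H)\le\sup_i\chi(G/H_i)+\card{I}$. Hence the whole content of (2) is that $\phi$ is an embedding, equivalently that the saturated neighbourhoods $\bigcap_{i\in F}V_iH_i$ (with $F\subseteq I$ finite, $V_i$ a neighbourhood of $e$) already form a base at $eH$. \textbf{This is the main obstacle}, and it is where local compactness enters. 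Given a neighbourhood $VH$ of $H$ with $\overline V$ compact, I would examine the compact ``shell'' $C=\overline V\setminus VH$ and, for each $c\in C$, use $c\notin H=\bigcap_i H_i$ to pick $i(c)$ with $c\notin H_{i(c)}$ and a neighbourhood $V_c$ of $e$ with $c\notin V_cH_{i(c)}$ (possible since $V_cH_{i(c)}$ is open and misses $c$ for $V_c$ small); the open sets $G\setminus V_cH_{i(c)}$ cover $C$, and a finite subcover selects a finite $F$ and a common $W$ with $\bigl(\bigcap_{i\in F}WH_i\bigr)\cap\overline V\subseteq VH$. Promoting this to the \emph{global} inclusion $\bigcap_{i\in F}WH_i\subseteq VH$ is the delicate point; I expect to obtain it by first reducing, via the inverse-limit presentation $G/H=\varprojlim_{F}G/\bigcap_{i\in F}H_i$, to the case of two subgroups, for which the embedding $G/(H_1\cap H_2)\hookrightarrow G/H_1\times G/H_2$ can be established directly from local compactness (properness of the associated orbit map $H_1/H\to G/H_2$).

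For (3), I would reduce the intersection defining $N=\bigcap_{x\in G}xHx^{-1}$ to a small index set and then quote (2). Choose $D\subseteq G$ projecting to a dense subset of $G/H$. Since $G$ is $\sigma$-compact, so is $G/H$, giving $\kappa(G/H)\le\aleph_0$; the standard weight formula $w(G/H)=\chi(G/H)\cdot\kappa(G/H)=\chi(G/H)+\aleph_0$ for coset spaces of locally compact groups then yields $d(G/H)\le w(G/H)=\chi(G/H)+\aleph_0$, so $D$ may be taken with $\card{D}\le\chi(G/H)+\aleph_0$. If $x^{-1}gx\in H$ for all $x\in D$, then the set $\{x\in G:g\in xHx^{-1}\}$ is closed and right $H$-invariant, hence descends to a closed subset of $G/H$ containing the dense image of $D$; it is therefore all of $G/H$, so $g\in N$, proving $N=\bigcap_{x\in D}xHx^{-1}$. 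As conjugation is a homeomorphism, $\chi(G/xHx^{-1})=\chi(G/H)$ for every $x$, and applying (2) to the family $\{xHx^{-1}:x\in D\}$ gives $\chi(G/N)\le\sup_{x\in D}\chi(G/xHx^{-1})+\card{D}\le\chi(G/H)+\aleph_0$, as required.
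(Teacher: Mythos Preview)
There is a genuine error in your treatment of (2). You assert that ``the whole content of (2) is that $\phi$ is an embedding'', but this is false: take $G=\Rgroup$, $H_1=\mathbb{Z}$, $H_2=\alpha\mathbb{Z}$ with $\alpha$ irrational; then $H_1\cap H_2=\{0\}$ and $\phi\colon\Rgroup\to\Tgroup\times\Tgroup$ is the dense irrational winding, which is \emph{not} an embedding --- yet (2) holds trivially here. Your acknowledged ``delicate point'' is thus an attempt to prove a statement that is both unnecessary for (2) and false in general; the finite intersections $\bigcap_{i\in F}V_iH_i$ simply need not form a neighbourhood base at $eH$.

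What you are missing, and what the paper exploits throughout, is that in a locally compact Hausdorff space $\Omega$ the character $\chi(\omega,\Omega)$ equals the minimal cardinality of \emph{any} family of neighbourhoods of $\omega$ whose intersection is $\{\omega\}$ --- with no requirement that the family be a base. (Shrink each member to a compact neighbourhood and close under finite intersections; a downward-directed family of compact neighbourhoods with trivial intersection is automatically a base.) With this, (2) is one line: for each $i$ take a family $\mathcal{V}_i$ of $H_i$-saturated open sets in $G$ with $\bigcap\mathcal{V}_i=H_i$ and $\card{\mathcal{V}_i}=\chi(G/H_i)$; then $\bigcup_i\mathcal{V}_i$ is a family of $H$-saturated neighbourhoods of $e$ with intersection $H$ and cardinality at most $\sup_i\chi(G/H_i)+\card{I}$. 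The same device disposes of (1) and replaces your tube argument, which as written is incomplete (without controlling $\widetilde B_\beta$, say via compact closure, there is no reason $\pi(\widetilde B_\beta\setminus O)$ avoids $eH_0$). Your argument for (3), by contrast, is correct and gives a pleasant alternative to the paper's direct construction via $W_i^3\subseteq V_i$ and countable covering sets $D_i$.
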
\noindent
In \thetag{3}, without $\sigma$-compactness one has \;
$\chi(G/N) \leq \chi(G/H)+\kappa(G)$\,.

\Proof
Recall that in a Hausdorff space $\Omega$ a family $(U_j)_{j\in J}$ of
compact neighbourhoods of $\omega\iin\Omega$ is a neighbourhood base iff it is
downwards directed and satisfies $\bigcap_{j\in J}\mspace{1mu}U_j=\{\omega\}$.
Thus, if $\Omega$ is locally compact, $\chi(\omega,\Omega)$ is the minimal
cardinality of a family of $\omega$-neighbourhoods whose intersection
is $\{\omega\}$.

To prove (1), take a family $\mathcal U_0$ of cardinality $\chi(G/H_0)$
consisting of open sets in $G$ with
$\bigcap\mspace{1mu}\mathcal U_0 = H_0$ and a family $\mathcal U_1$ of
cardinality $\chi(H_0/H_1)$ consisting of open sets in $G$ with
$\bigcap\mathcal U_1\cap  H_0 = H_1$\,. In addition, we assume
that \,$U=U\mspace{1mu}H_i$ for $U\!\iin\mathcal U_i$\,. Then the image of
$\mathcal U_0\cup\mathcal U_1$ under the quotient map to $G/H_1$ has $\{H_1\}$
as the intersection. Conversely, starting with a neighbourhood base in
$G/H_1$\,, let $\mathcal U$ be a family of open sets in
$G$ that is downwards directed and whose intersection is $H_1$\,.
We may assume that there is a compact subset
$C$ of $G$ such that $U=U\mspace{1mu}H_1\subseteq C\mspace{1mu}H_1$ for all
$U\iin\mathcal U$\,. Then one can show that
$\{UH_0\!:U\iin\mathcal U\,\}$ \;(resp. $\{U\cap H_0\!:U\iin\mathcal U\,\}$)
has intersection $H_0$ (resp. $H_1$).
Similarly for \thetag{2}.

To show \thetag{3}, we can (replacing $G$ by $G/N$) assume
that $N$ is trivial. Let $(V_i)_{i\in I}$ be a family of compact
$e_G$-neighbourhoods such that \,$\bigcap_{i\in I}\mspace{1mu}V_i\subseteq H$
and $\card{I}=\chi(G/H)$. Choose compact symmetric $e_G$-neighbourhoods
$W_i$ such that \,$W_i^3\subseteq V_i$ and countable subsets $D_i$ in $G$ such
that \,$G=D_iW_i$\,. Then the family of sets
$\{\,xW_ix^{-1}:\linebreak x\iin D_i\,,\,i\iin I\}$ has intersection $\{e_G\}$
and its cardinality is at most $\chi(G/H)+\aleph_0$\,. 
Alternatively, when $H$ is
compact, one can prove \thetag{3} by using from \cite{Hu}, Lemma 2 and
formulas\;\thetag{\dag} and \thetag{\ddag}\,(which extends to quotient
spaces).
\end{proof}

\begin{corollary}
    \label{corollary:basicproperties:monotone}
Let $\gamma>0$ be an ordinal number.
If $K_\alpha$ are compact subgroups of~$G$ for $\alpha\leq\gamma$\,, such that
\,$K_\alpha\supseteq K_{\alpha+1}$ and
\,$\chi(K_{\alpha}/K_{\alpha+1})=\aleph_0$
for all $\alpha<\gamma$\,, and
\,$K_\beta = \bigcap_{\alpha<\beta} K_\alpha$ \,for all limit ordinals
$\beta\leq\gamma$\,,
then
\;$\chi(G/K_\gamma) = \card{\gamma}+\chi(G/K_0)+\aleph_0$\,.
\end{corollary}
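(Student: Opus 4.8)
The plan is to prove the equality by splitting it into an upper and a lower bound, the lower bound being where the real work lies. Throughout write $\rho=\card{\gamma}+\chi(G/K_0)+\aleph_0$, which (as $\aleph_0$ occurs) is simply $\max(\card{\gamma},\chi(G/K_0),\aleph_0)$. For the upper bound $\chi(G/K_\gamma)\le\rho$ I would use transfinite induction on $\gamma$. At a successor $\gamma=\delta+1$, part (1) of Lemma~\ref{lemma:basicproperties} with $H_0=K_\delta\supseteq H_1=K_{\delta+1}$ gives $\chi(G/K_\gamma)=\max(\chi(G/K_\delta),\chi(K_\delta/K_{\delta+1}))=\max(\chi(G/K_\delta),\aleph_0)$, and the inductive value of $\chi(G/K_\delta)$ together with $\card{\delta+1}+\aleph_0=\card{\delta}+\aleph_0$ yields exactly $\rho$. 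At a limit $\gamma$, since $K_\gamma=\bigcap_{\alpha<\gamma}K_\alpha$, part (2) with index set $\gamma$ gives $\chi(G/K_\gamma)\le\sup_{\alpha<\gamma}\chi(G/K_\alpha)+\card{\gamma}$; feeding in the inductive values, each of which is at most $\rho$, produces $\chi(G/K_\gamma)\le\rho$.

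For the lower bound I would establish the three inequalities $\chi(G/K_\gamma)\ge\chi(G/K_0)$, $\chi(G/K_\gamma)\ge\aleph_0$ and $\chi(G/K_\gamma)\ge\card{\gamma}$ separately and non-inductively. The first two are easy: applying part (1) to $K_0\supseteq K_\gamma$ and to $K_0\supseteq K_1$ shows $\chi(G/\cdot)$ is monotone along the chain, so $\chi(G/K_\gamma)\ge\chi(G/K_1)=\max(\chi(G/K_0),\aleph_0)$, which dominates both $\chi(G/K_0)$ and $\aleph_0$ (this uses $\gamma\ge1$). The decisive inequality is $\chi(G/K_\gamma)\ge\card{\gamma}$, and I expect this to be the main obstacle. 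A purely topological argument cannot succeed, since a single strictly decreasing well-ordered chain of closed sets of length $\gamma$ shrinking to a point is compatible with countable character (for instance in the ordinal space $\omega_\omega+1$); one must therefore exploit the group structure, not merely the chain.

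To obtain $\chi(G/K_\gamma)\ge\card{\gamma}$ I would first reduce, via part (1) applied to $K_0\supseteq K_\gamma$, to proving $\chi(K_0/K_\gamma)\ge\card{\gamma}$, i.e. work inside the compact group $K_0$. Equip the compact homogeneous space $K_0/K_\gamma$ with its $K_0$-invariant probability measure and consider $L^2(K_0/K_\gamma)$. For $\alpha\le\gamma$ let $H_\alpha\subseteq L^2(K_0/K_\gamma)$ be the closed subspace of right $K_\alpha$-invariant functions, so $H_\alpha=L^2(K_0/K_\alpha)$; since $K_\alpha\supseteq K_{\alpha+1}$ these increase with $\alpha$, and the inclusion $H_\alpha\subseteq H_{\alpha+1}$ is \emph{strict} because $K_\alpha\neq K_{\alpha+1}$ (indeed $\chi(K_\alpha/K_{\alpha+1})=\aleph_0>0$), which forces a continuous $K_{\alpha+1}$-invariant function that is not $K_\alpha$-invariant. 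Choosing a unit vector $e_\alpha\in H_{\alpha+1}\ominus H_\alpha$ for each $\alpha<\gamma$, the relation $e_\alpha\in H_{\alpha+1}\subseteq H_\beta\perp e_\beta$ for $\alpha<\beta$ shows $\{e_\alpha:\alpha<\gamma\}$ is orthonormal of cardinality $\card{\gamma}$ — notably no continuity at limit stages is required.

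Hence the density character of $L^2(K_0/K_\gamma)$ is at least $\card{\gamma}$. Since continuous functions are dense in $L^2$, this density is at most that of $C(K_0/K_\gamma)$, namely the weight of the compact homogeneous space $K_0/K_\gamma$, which equals its character $\chi(K_0/K_\gamma)$; thus $\chi(K_0/K_\gamma)\ge\card{\gamma}$ and so $\chi(G/K_\gamma)\ge\card{\gamma}$. Combining this with the easy lower bounds and the inductive upper bound gives $\chi(G/K_\gamma)=\rho=\card{\gamma}+\chi(G/K_0)+\aleph_0$, as asserted.
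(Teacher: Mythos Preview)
Your proposal is correct. The upper bound is handled exactly as in the paper, by transfinite induction via parts (1) and (2) of Lemma~\ref{lemma:basicproperties}. For the crucial lower bound $\chi(G/K_\gamma)\ge\card\gamma$, however, you take a genuinely different route. The paper argues by minimal counterexample: if $\gamma$ is least with $\tau:=\chi(G/K_\gamma)<\card\gamma$, then $\gamma$ is a limit ordinal and the sets $VK_\alpha$ (with $V$ an $e_G$-neighbourhood, $\alpha<\gamma$) form a neighbourhood base at the point $K_\gamma$ in $G/K_\gamma$; extracting a subfamily of size $\tau$ whose intersection is $K_\gamma$, the ordinals $\alpha_i$ that occur all satisfy $\card{\alpha_i}\le\tau$ by minimality, so their supremum $\beta$ has $\card\beta\le\tau<\card\gamma$, whence $K_\beta\subseteq\bigcap_iV_iK_{\alpha_i}=K_\gamma$, contradicting the strict descent of the chain. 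Your $L^2$-orthonormality argument is an elegant alternative that sidesteps this projective-limit bookkeeping, but it imports one nontrivial external fact: that $w(K_0/K_\gamma)=\chi(K_0/K_\gamma)$ for a coset space of a compact group. This is true (such spaces are dyadic by Kuz'minov, and dyadic compacta satisfy weight $=$ character; alternatively one approximates $K_0/K_\gamma$ by metrizable quotients $K_0/(K_\gamma N)$ via Peter--Weyl), but it is not proved in the paper, so your argument trades the paper's self-contained neighbourhood-basis reasoning for a cleaner structural shortcut that leans on compact-group structure theory. Your remark that a purely order-theoretic argument on the chain cannot succeed is well taken and explains why both proofs must invoke something beyond the bare descending sequence---you the Hilbert-space geometry, the paper the explicit form of the quotient topology.
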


\Proof
By induction, Lemma\;\ref{lemma:basicproperties} gives
\,$\chi(G/K_\gamma) \le \card{\gamma}+\chi(G/K_0)+\aleph_0$\,.
In addition, equality follows from Lemma \ref{lemma:basicproperties}\,%
\thetag{1} when
\,$\card{\gamma}\le\chi(G/K_0)+\aleph_0$ \,(since $\gamma>0$ and
$\chi(K_{\alpha}/K_{\alpha+1})=\aleph_0$).
Now assume that equality does not hold for some $\gamma$ and choose
$\gamma$ minimal. Put $\tau=\chi(G/K_\gamma)$\,. Then $\tau<\card{\gamma}$
and \,$\card{\alpha}=\chi(G/K_\alpha)\le\tau$ \,if
\,$\chi(G/K_0)+\aleph_0\le\card{\alpha}$ and $\alpha<\gamma$\,. Thus
$\gamma$ must be a limit ordinal.
Since \,$K_\gamma = \bigcap_{\alpha<\gamma} K_\alpha$\,, it follows that the
family of sets
$VK_\alpha$ where $V$ is an $e_G$-neighbourhood and $\alpha<\gamma$
has the intersection $K_\gamma$\,. Thus (see the beginning of the proof of
Lemma \ref{lemma:basicproperties}) it defines a neighbourhood basis in
$G/K_\gamma$ \,(i.e., $G/K_\gamma$ is homeomorphic to the projective limit
of the spaces $G/K_\alpha$ where $\alpha<\gamma$\,).
This implies that there exists a family $(V_i)_{i\in I}$ of
$e_G$-neighbourhoods and $\alpha_i<\gamma$ such that
\,$\bigcap_{i\in I}V_iK_{\alpha_i}=K_\gamma$\,,
where $\card{I}=\tau$\,. Put
$\beta=\sup\{\,\alpha_i\!: i\iin I\}$. Since $\card{\alpha_i}\le\tau$ for
all $i$ and $\card{I}=\tau$\,, it follows that $\card{\beta}\le\tau$,
giving $\beta<\gamma$\,. But then we would have
\,$V_iK_{\alpha_i}\supset K_{\beta}$ for all $i$ which is impossible.
\end{proof}
\vspace{2mm plus 1mm}

Now assume that $G$ is a non-discrete locally compact group
(i.e., $\chi(G)\ge\aleph_0$\,; but our main interest will be the case that
$G$ is a non-metrizable). Let $\tau$ be a cardinal number with
\,$\aleph_0\le\tau\le\chi(G)$. We put
\begin{align*}
\scrK_\tau\,&=\,\{K\!: \,K\text{ compact subgroup of }G,\ \chi(G/K)\le\tau\}\\
\scrK_\tau^\circ\,&=\,\{K\!: \,K\text{ compact subgroup of }G,\ \chi(G/K)<\tau\}\;.
\end{align*}

\begin{corollary}
    \label{corollary:basicproperties:classes}
Let $G$ be a locally compact group, $\aleph_0\le\tau\le\chi(G)$. Assume
that $K_i\iin\scrK_\tau^\circ\ \,(i=1,\dots,n)$
and that $H$ is an open $\sigma$-compact subgroup of $G$. Then it follows
\,$\bigcap_{i=1}^n K_i\iin\scrK_\tau^\circ$\,,
$H\cap K_i\iin\scrK_\tau^\circ$\,, and there exists
$N\iin\scrK_\tau^\circ$ such that $N$ is a normal subgroup of $H$ and
\,$N\subseteq\bigcap_{i=1}^n K_i$\,.\vspace{-1mm}
\end{corollary}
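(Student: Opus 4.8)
The plan is to derive the three assertions in turn from Lemma~\ref{lemma:basicproperties}, the first two by routine cardinal arithmetic and the last by forming a normal core inside the open subgroup~$H$. For the first assertion, I would apply part~(2) of Lemma~\ref{lemma:basicproperties} with the finite index set $I=\{1,\dots,n\}$, giving $\chi\bigl(G/\bigcap_{i=1}^n K_i\bigr)\le\sup_i\chi(G/K_i)+n$. Since each $\chi(G/K_i)<\tau$ and the supremum of finitely many cardinals below $\tau$ is again below $\tau$ (being their maximum), while adjoining the finite summand $n$ to an infinite cardinal changes nothing and to a finite one keeps it below $\aleph_0\le\tau$, the right-hand side is $<\tau$; as $\bigcap_i K_i$ is a compact subgroup, it lies in $\scrK_\tau^\circ$. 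For the second assertion, note that $H$ open forces $H\cap K_i$ to be open in the compact group $K_i$, so the homogeneous quotient $K_i/(H\cap K_i)$ is discrete (indeed finite) and hence has finite character. Applying part~(1) of Lemma~\ref{lemma:basicproperties} with $H_0=K_i\supseteq H_1=H\cap K_i$ then yields $\chi\bigl(G/(H\cap K_i)\bigr)=\max\bigl(\chi(G/K_i),\chi(K_i/(H\cap K_i))\bigr)<\tau$, and $H\cap K_i$ is a compact subgroup, so it belongs to $\scrK_\tau^\circ$.

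The third assertion is where the real work lies. I would set $L=H\cap\bigcap_{i=1}^n K_i$, a closed (hence compact) subgroup of the $\sigma$-compact open group $H$; combining the first two assertions (or applying part~(1) of Lemma~\ref{lemma:basicproperties} to $L\subseteq\bigcap_i K_i$) gives $\chi(G/L)<\tau$. Because $H$ is open in $G$, the coset space $H/L$ is an open subspace of $G/L$ containing the base point, so the two homogeneous spaces have the same local weight: $\chi(H/L)=\chi(G/L)<\tau$. Now take $N=\bigcap_{x\in H}xLx^{-1}$, the normal core of $L$ in $H$; this is a compact subgroup, normal in $H$, with $N\subseteq L\subseteq\bigcap_i K_i$. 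Applying part~(3) of Lemma~\ref{lemma:basicproperties} to the $\sigma$-compact group $H$ and its closed subgroup $L$ gives $\chi(H/N)\le\chi(H/L)+\aleph_0$, and transferring back through the open inclusion once more, $\chi(G/N)=\chi(H/N)\le\chi(G/L)+\aleph_0$.

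The delicate point is the final cardinal estimate $\chi(G/L)+\aleph_0<\tau$. When $\tau$ is uncountable this is immediate, since then $\chi(G/L)<\tau$ and $\aleph_0<\tau$, and the cardinal sum of two cardinals below $\tau$ (again their maximum) stays below $\tau$; hence $N\in\scrK_\tau^\circ$ and the construction is complete. I expect precisely this $+\aleph_0$ term from the normal-core estimate of part~(3) to be the main obstacle: in the borderline case $\tau=\aleph_0$ it only yields $\chi(G/N)\le\aleph_0$ rather than $<\aleph_0$, so the third assertion is genuinely an uncountable-$\tau$ statement and that case must be kept in mind when the corollary is invoked. The only other step needing care is the invariance $\chi(G/L)=\chi(H/L)$ under passage to the open subgroup, which rests on homogeneity of the coset spaces together with the inheritance of neighbourhood bases at a point by open subspaces.
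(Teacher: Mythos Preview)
Your proposal is correct and follows precisely the route the paper intends: the corollary is stated without proof, as an immediate consequence of parts (1)--(3) of Lemma~\ref{lemma:basicproperties}, and your derivation unpacks exactly that. Your observation about the borderline case $\tau=\aleph_0$ is sharp: part~(3) of the lemma only gives $\chi(G/N)\le\chi(G/L)+\aleph_0$, so when $\tau=\aleph_0$ the bound fails to be strict, and indeed the third assertion can genuinely fail (e.g.\ $G=H=\mathrm{PGL}_2(\mathbb{Q}_p)$ has compact open subgroups but no nontrivial compact open normal subgroup). The paper never invokes that third clause with $\tau=\aleph_0$---all applications (Lemma~\ref{lemma:approx}, Theorem~\ref{th:measureclasses} for $\aiMeast{\tau}$) assume $\tau>\aleph_0$---so the statement is harmless in context, but you are right to flag it.
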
\noindent
Similar properties hold for $\scrK_\tau$\,. In fact,
$\bigcap_{i\in I} K_i\iin\scrK_\tau$ holds when $K_i\iin\scrK_\tau$ for
$i\iin I$ and $\card{I}\le\tau$\,. \;Observe that
\,$\scrK_\tau=\scrK_{\tau^+}^\circ$\,, where $\tau^+$ denotes the successor
cardinal of~$\tau$\,. Furthermore, if $H$ is any open subgroup
of a topological group $G$\,, then
\[
\chi(G/H_1)=\chi\bigl(H/(H_1\cap H)\bigr)=\chi\bigl(G/(H_1\cap H)\bigr)
\]
for every subgroup $H_1$ of $G$\,. Every $\mu\iin\MeasG$ is
supported by some open $\sigma$-compact\linebreak
subgroup (depending on $\mu$).
This will allow us to reduce many arguments to\linebreak
$\sigma$-compact groups \,(see Lemma\;\ref{lemma:heredit}).
\vspace{2mm plus 1mm}

We will now use these classes of compact subgroups to define some classes of
measures that will be useful in the induction.
\begin{align*}
\Meas_\tau(G)\;&=\;\bigcup_{K\iin\,\scrK_\tau}\Meas(G/K)\qquad
\text{measures of character $\tau$ .}
\\[-2mm]
\intertext{For $\tau=\aleph_0$ we put\quad
$\ssMeast{\aleph_0}(G)= \sMeasG\cap\Meas_{\aleph_0}(G)
\text{\ \ and \ }\aiMeast{\aleph_0}(G)=\aMeasG$,\vspace{1mm}\newline
and for $\aleph_0<\tau\le\chi(G)$\vspace{-3mm}}
\ssMeast{\tau}(G)\:&=\:\{\,\mu\in\sMeasG\cap\Meas_\tau(G):\;
\mu\perp\Meas_{\tau_1}(G)\text{ \,for all }\tau_1<\tau\}
\\
&\text{\kern 1cm strongly singular measures of character $\tau$}
\\
\aiMeast{\tau}(G)\,&=\,\{\mu\in\Meas_\tau(G):\;
\mu=\lim_{K\!\in\,\scrK_\tau^\circ} \mu\conv\lambda_K
\text{ \,(norm limit of the net)}\}
\vspace*{-2mm}
\end{align*}
\hspace{4cm}approximately invariant measures of character $\tau$
\\[.5mm]\hspace*{1.6cm}(by Corollary \ref{corollary:basicproperties:classes},
$\scrK_\tau^\circ$ is downwards directed under inclusion).
\\[2mm]
More generally, when $K$ is a compact subgroup of $G$ that is not open
(equivalently,
\,$\chi(G/K)\ge\aleph_0$\,),
we put \ $\ssMeas(G,K)=
\Meas(G/K)\cap\ssMeast{\chi(G/K)}(G)$ \ and
\ $\aiMeas(G,K)=\linebreak\Meas(G/K)\cap\aiMeast{\chi(G/K)}(G)$.
\ The definitions of $\ssMeas(G,K)$ and $\aiMeas(G,K)$ impose conditions
coming from the bigger space $\MeasG$, using the embedding of $\Meas(G/K)$
into $\MeasG$ described in Lemma \ref{Identify} \,(see also
Lemma \ref{lemma:approx} for a more intrinsic description in terms of $G/K$).
If $K$ is normal in $G$\,, things are easier, see
Corollary \ref{corollary:qIdentification} below.
\\[.5mm]
Note that if $\tau_1<\tau$\,, then \,$\scrK_{\tau_1}\subseteq\scrK_\tau$ and
\,$\Meas_{\tau_1}(G)\subseteq\Meas_\tau(G)$.

\begin{lemma}  \label{lemma:heredit}\vspace{1mm}
Let $G$ be a locally compact group, $K$ a non-open compact subgroup.
\item[(i)] If $H$ is an open subgroup of $G$ satisfying \,$K\subseteq H$\,,
then \quad$\aiMeas(H,K)\,=\hspace*{\fill}\linebreak
\aiMeas(G,K)\cap\Meas(H)$ and
\;$\ssMeas(H,K)=\ssMeas(G,K)\cap\Meas(H)$.
\item[(ii)] If $K'$ is a compact subgroup of $K$ with $\chi(G/K')=\chi(G/K)$,
then \;$\aiMeas(G,K)=\aiMeas(G,K')\cap\Meas(G/K)$ and
\;$\ssMeas(G,K)=\ssMeas(G,K')\cap\Meas(G/K)$.
\end{lemma}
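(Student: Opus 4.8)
The plan is to dispose of part~(ii) first, since it is essentially formal, and to concentrate the work on part~(i). For (ii), the point is that $K'\subseteq K$ forces $\lambda_K\conv\lambda_{K'}=\lambda_K$, so $\mu=\mu\conv\lambda_K$ implies $\mu=\mu\conv\lambda_{K'}$ and hence $\Meas(G/K)\subseteq\Meas(G/K')$. As $\chi(G/K')=\chi(G/K)=:\tau$, both $\aiMeas(G,K)$ and $\aiMeas(G,K')$ are cut out of the \emph{same} class $\aiMeast{\tau}(G)$, and likewise for the strongly singular classes. Therefore $\aiMeas(G,K')\cap\Meas(G/K)=\Meas(G/K')\cap\aiMeast{\tau}(G)\cap\Meas(G/K)=\Meas(G/K)\cap\aiMeast{\tau}(G)=\aiMeas(G,K)$, using $\Meas(G/K)\subseteq\Meas(G/K')$; the identical computation with $\ssMeast{\tau}(G)$ gives the strongly singular case. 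So (ii) is immediate once the inclusion of spaces is recorded.

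For (i), write $\tau=\chi(G/K)$. Since $H$ is open, Corollary~\ref{corollary:basicproperties:classes} gives $\chi(H/(K'\cap H))=\chi(G/K')$ for every compact $K'$, so the two notions of ``character $\tau$'' coincide. First I would record the identifications $\Meas(H/K)=\Meas(G/K)\cap\Meas(H)$ (both equal $\{\mu\in\Meas(H):\mu=\mu\conv\lambda_K\}$ via Lemma~\ref{Identify}), $\sMeasG\cap\Meas(H)=\sMeas(H)$ and $\aMeasG\cap\Meas(H)=\aMeas(H)$ (because $\lambda_G|_H$ and $\lambda_H$ are mutually absolutely continuous), and $\Meas(H/K)\subseteq\Meas_\tau(H)\cap\Meas_\tau(G)$. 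The map $K'\mapsto K'\cap H$ sends $\scrK_\tau^\circ(G)$ into $\scrK_\tau^\circ(H)$ and is cofinal (as $K'\cap H\subseteq K'$ and $\chi(H/(K'\cap H))=\chi(G/K')<\tau$), while $\scrK_\tau^\circ(H)\subseteq\scrK_\tau^\circ(G)$ as sets; here $\scrK_\tau^\circ(G),\scrK_\tau^\circ(H)$ denote the class computed in $G$, resp.\ $H$.

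For the approximately invariant part (with $\tau>\aleph_0$; when $\tau=\aleph_0$ one uses instead $\aiMeast{\aleph_0}=\aMeas$ and $\aMeasG\cap\Meas(H)=\aMeas(H)$), fix $\mu\in\Meas(H/K)$; I must show that $\mu\conv\lambda_{K'}\to\mu$ over $\scrK_\tau^\circ(G)$ iff over $\scrK_\tau^\circ(H)$. The direction $G\Rightarrow H$ is immediate: $\scrK_\tau^\circ(H)$ is a cofinal subset of $\scrK_\tau^\circ(G)$, so the $H$-net is a cofinal subnet of a convergent net and has the same limit. The converse is the main obstacle, and it rests on an escape-of-mass estimate: for $\mu$ supported in $H$ and arbitrary $K'$, right translation spreads $\mu$ over the cosets $Hy$ ($y\iin K'$), so the mass of $\mu\conv\lambda_{K'}$ lying in $H$ is only $[K':K'\cap H]^{-1}\|\mu\|$, whence $\|\mu\conv\lambda_{K'}-\mu\|\ge(1-[K':K'\cap H]^{-1})\|\mu\|$ unless $K'\subseteq H$. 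Thus, assuming the $H$-net converges and $\mu\neq0$, the nonempty directed set $\scrK_\tau^\circ(H)$ supplies a base point $K_0'\subseteq H$; for $K'\subseteq K_0'$ one has $K'\subseteq H$ and the two directed sets have the same elements below $K_0'$, so the $G$- and $H$-nets agree on this common tail. Since a net converges as soon as some terminal segment does (the threshold $K_0'$ already lies inside $H$), the $G$-net converges to $\mu$ as well; the case $\mu=0$ is trivial.

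For the strongly singular part I would use that $\mu$ supported in $H$ satisfies $\mu\perp\nu\iff\mu\perp\nu|_H$, and establish $\Meas_{\tau_1}(H)\subseteq\Meas_{\tau_1}(G)$ together with $\{\nu|_H:\nu\in\Meas_{\tau_1}(G)\}\subseteq\Meas_{\tau_1}(H)$ for every $\tau_1<\tau$. The first inclusion is clear since $\chi(H/K')=\chi(G/K')$ and $\Meas(H/K')\subseteq\Meas(G/K')$. For the second, if $\nu\in\Meas(G/K')$ with $\chi(G/K')\le\tau_1$, put $L=K'\cap H$; then $\nu=\nu\conv\lambda_{K'}=\nu\conv\lambda_L$ (as $L\subseteq K'$ gives $\lambda_{K'}\conv\lambda_L=\lambda_{K'}$), and since $L\subseteq H$ right translation by $L$ preserves $H$, whence $\nu|_H=(\nu|_H)\conv\lambda_L\in\Meas(H/L)\subseteq\Meas_{\tau_1}(H)$ because $L\in\scrK_{\tau_1}(H)$. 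Combining these, $\mu\perp\Meas_{\tau_1}(G)$ iff $\mu\perp\Meas_{\tau_1}(H)$ for all $\tau_1<\tau$, and together with the singular and character identifications this yields $\ssMeas(G,K)\cap\Meas(H)=\ssMeas(H,K)$. The only genuinely delicate point in the whole argument is the escape-of-mass estimate underlying the converse for $\aiMeas$; everything else is bookkeeping with the definitions and the open-subgroup character formula of Corollary~\ref{corollary:basicproperties:classes}.
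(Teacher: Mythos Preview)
Your argument is correct and follows essentially the same route as the paper: both reduce everything to the open-subgroup character formula of Corollary~\ref{corollary:basicproperties:classes}, the identification $\Meas(G/K_1)\cap\Meas(H)\subseteq\Meas\bigl(H/(K_1\cap H)\bigr)$, and the cofinality of $\scrK_\tau^\circ(H)$ in $\scrK_\tau^\circ(G)$; the paper just packages this as the single formula $\Meas_\tau(G)\cap\Meas(H)=\Meas_\tau(H)$ and asserts that the $\ssMeast{\tau}$ and $\aiMeast{\tau}$ identities follow. One remark: your escape-of-mass estimate is correct but superfluous --- once you have any $K_0'\in\scrK_\tau^\circ(H)$ (which is nonempty for $\tau>\aleph_0$ by Kakutani--Kodaira), the tail below $K_0'$ in $\scrK_\tau^\circ(G)$ and in $\scrK_\tau^\circ(H)$ literally coincide and are cofinal in $\scrK_\tau^\circ(G)$, so the tail argument alone settles $H\Rightarrow G$; alternatively (as the paper hints via Lemma~\ref{lemma:approx}) the inequality $\norm{\nu-\nu\conv\lambda_L}\le 2\norm{\nu-\nu\conv\lambda_{L'}}$ for $L\subseteq L'$ reduces the limit condition to an infimum condition, which transfers immediately between cofinal subsets.
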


\Proof
For a compact subgroup $K_1$ of $G$\,,
Corollary\;\ref{corollary:basicproperties:classes} gives
$K_1\cap H\iin\scrK_\tau$
iff\linebreak $K_1\iin\scrK_\tau$\,, and by Lemma\;\ref{Identify},
\,$\Meas(G/K_1)\cap\Meas(H)\subseteq\Meas\bigl(H/(K_1\cap H)\bigr)$.
It follows that \,$\Meas_\tau(G)\cap\Meas(H)=\Meas_\tau(H)$, leading to
corresponding
formulas for $\ssMeast{\tau}$ and $\aiMeast{\tau}$\,.  This implies \thetag{i}
\,(alternatively, one might use Lemma\;\ref{lemma:approx}).
\thetag{ii} follows immediately from the definitions.
\end{proof}
\vspace{1mm plus 2pt}

\begin{theorem}\label{th:measureclasses}
Let $G$ be a non-discrete locally compact group, $\tau$ a cardinal number
with \,$\aleph_0\le\tau\le\chi(G)$, \;$K$ a non-open compact subgroup of $G$. 
\vspace{1pt plus 2pt}
\item[(i)] $\Meas_\tau(G)$ and $\aiMeast{\tau}(G)$ are norm closed
ideals in $\MeasG$, and \,$\ssMeast{\tau}(G)$ is a norm closed
subspace.
\item[(ii)] $\Meas_\tau(G),\:\aiMeast{\tau}(G)$ and
$\ssMeast{\tau}(G)$ are L-subspaces \,(i.e., $\mu\iin M$ and
$\abs{\nu}\ll\abs{\mu}$ implies \,$\nu\iin M$). We have \
$\Meas_\tau(G)=\ssMeast{\tau}(G)\oplus\aiMeast{\tau}(G)$
\vspace{1pt plus 2pt}and
\;$\ssMeast{\tau}(G)\perp\aiMeast{\tau}(G)$.
\item[(iii)]  $\Meas(G/K),\;\aiMeas(G,K)$ and $\ssMeas(G,K)$ are closed
subspaces and vector sublattices in $\MeasG$ \,(i.e., $\mu\iin M$ implies
$\abs{\mu}\iin M$). We have \vspace{-3pt}
\[
\Meas(G/K)\;=\;\ssMeas(G,K)\oplus\aiMeas(G,K)
\text{ \quad and \quad} \ssMeas(G,K)\perp\aiMeas(G,K)\,.
\]
\end{theorem}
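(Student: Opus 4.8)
The plan is to work inside the $L$-space $\MeasG$ and reduce the whole statement to two facts: that $\Meas_\tau(G)$ is a band (an $L$-subspace), and that the approximately invariant measures are precisely the norm-closure of the lower levels. Throughout I write $\Meas_{<\tau}(G)=\bigcup_{K\in\scrK_\tau^\circ}\Meas(G/K)=\bigcup_{\tau_1<\tau}\Meas_{\tau_1}(G)$. The elementary parts of (i) come first. Since $\lambda_K\star\lambda_{K'}=\lambda_K$ when $K'\subseteq K$, one has $\Meas(G/K)\subseteq\Meas(G/K')$ whenever $K'\subseteq K$; together with the intersection properties of Corollary~\ref{corollary:basicproperties:classes} ($K_1\cap K_2\in\scrK_\tau$, and $\bigcap_nK_n\in\scrK_\tau$ for a sequence) this shows at once that $\Meas_\tau(G)$ is a subspace and is norm-closed (a Cauchy sequence with $\mu_n\in\Meas(G/K_n)$ converges inside the closed space $\Meas(G/\bigcap_nK_n)$). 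Each $\Meas(G/K)=\MeasG\star\lambda_K$ is a left ideal; for the right side, given $\mu\in\Meas(G/K)$ and $\nu$ supported together in an open $\sigma$-compact $H$, I would use Corollary~\ref{corollary:basicproperties:classes} (via $\scrK_\tau=\scrK_{\tau^+}^\circ$) to find $N\subseteq K$ normal in $H$ with $N\in\scrK_\tau$; then $\lambda_N$ is central in $\Meas(H)$ and $\mu\star\nu\in\Meas(H/N)\subseteq\Meas_\tau(G)$.

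The crux is that $\Meas_\tau(G)$ is an $L$-subspace, which is where singular measures must be controlled. Given $\mu\in\Meas(G/K)$ with $K\in\scrK_\tau$ and $\abs{\nu}\ll\abs{\mu}$, I must produce $K'\in\scrK_\tau$ with $\nu\in\Meas(G/K')$. Since $\abs{\mu}$ is $K$-invariant, write $\nu=g\,\abs{\mu}$ with $g\in L^1(\abs{\mu})$; the compact group $K$ acts on $L^1(\abs{\mu})$ by right translation $R_k$, isometrically and (by a density argument) norm-continuously. Then the orbit $\{R_kg:k\in K\}$ is a compact subset of the metric space $L^1(\abs{\mu})$, hence metrizable, and the stabilizer $K'=\{k\in K:R_kg=g\}$ is a closed subgroup with $K/K'$ mapping continuously and injectively onto that orbit; thus $K/K'$ is compact metrizable, i.e. $\chi(K/K')\le\aleph_0$. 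By Lemma~\ref{lemma:basicproperties}(1), $\chi(G/K')=\max(\chi(G/K),\chi(K/K'))\le\tau$, while $g$ and $\abs{\mu}$ are both $K'$-invariant, so $\nu\in\Meas(G/K')\subseteq\Meas_\tau(G)$; the same argument applies to $\Meas_{<\tau}(G)$. I expect this orbit--stabilizer step to be the main obstacle: it is exactly what replaces the (false for singular $\mu$) idea of approximating $\nu$ by the smoothings $\nu\star\lambda_{K'}$.

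With this in hand, (ii) is mostly $L$-space bookkeeping. For $K_0\in\scrK_\tau^\circ$ and $K\subseteq K_0$ one has $\mu\star\lambda_K=\mu$ on $\Meas(G/K_0)$, so $\Meas(G/K_0)\subseteq\aiMeast{\tau}(G)$; if $\mu\in\overline{\Meas_{<\tau}(G)}$ and $\mu'\in\Meas(G/K_0)$ with $\norm{\mu-\mu'}<\varepsilon$, then $\norm{\mu\star\lambda_K-\mu}\le2\norm{\mu-\mu'}$ for all $K\subseteq K_0$ in $\scrK_\tau^\circ$, a cofinal set, so $\mu=\lim_K\mu\star\lambda_K$. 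Hence $\aiMeast{\tau}(G)=\overline{\Meas_{<\tau}(G)}=(\Meas_{<\tau}(G)^{\perp})^{\perp}$ is the band generated by $\Meas_{<\tau}(G)$ (so an $L$-subspace, and a closed ideal as the closure of the ideal $\Meas_{<\tau}(G)$). The band projection in $\MeasG$ splits each $\mu\in\Meas_\tau(G)$ as $\mu_{ai}+\mu_{ss}$ with $\mu_{ai}\in\aiMeast{\tau}(G)$, $\mu_{ss}\in\Meas_\tau(G)\cap\aiMeast{\tau}(G)^{\perp}$, orthogonally; and $\Meas_\tau(G)\cap\aiMeast{\tau}(G)^{\perp}=\{\mu\in\Meas_\tau(G):\mu\perp\Meas_{\tau_1}(G)\ \text{for all}\ \tau_1<\tau\}$. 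For $\tau>\aleph_0$ this set lies in $\sMeasG$ because $\aMeasG\subseteq\Meas_{\aleph_0}(G)$ --- which I would prove from the closedness of $\Meas_{\aleph_0}(G)$ together with $f\star\lambda_{N_n}\to f$ for compact normal $N_n\downarrow\{e_G\}$ (Kakutani--Kodaira in a $\sigma$-compact subgroup) --- so it equals $\ssMeast{\tau}(G)$; the case $\tau=\aleph_0$ is just the Lebesgue decomposition with $\aiMeast{\aleph_0}(G)=\aMeasG$. The orthogonality $\ssMeast{\tau}(G)\perp\aiMeast{\tau}(G)$ drops out since each $\mu_{ss}$ is orthogonal to every $\mu_{ai}\star\lambda_K\in\Meas_{<\tau}(G)$ and orthogonality survives norm limits.

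Finally, (iii) follows by intersecting the decomposition of (ii) with $\Meas(G/K)$ for $\tau=\chi(G/K)$. Both $\ssMeast{\tau}(G)$ and $\aiMeast{\tau}(G)$ are $G$-invariant closed subspaces (translation preserves $\sMeasG$, $\Meas_\tau(G)$ and the orthogonality relations), hence invariant under the averaging $\cdot\star\lambda_K$; so if $\mu\in\Meas(G/K)$ has band parts $\mu_{ss},\mu_{ai}$, applying $\star\lambda_K$ and using uniqueness of the band decomposition forces $\mu_{ss},\mu_{ai}\in\Meas(G/K)$, giving $\Meas(G/K)=\ssMeas(G,K)\oplus\aiMeas(G,K)$ with orthogonality. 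That $\Meas(G/K)$ is a sublattice follows because $\mu=\mu\star\lambda_K$ yields $\mu\star\delta_k=\mu$ and hence $\abs{\mu}\star\delta_k=\abs{\mu}$ for $k\in K$; the sublattice property of $\ssMeas(G,K)$ and $\aiMeas(G,K)$ is then inherited, being intersections of the sublattice $\Meas(G/K)$ with the $L$-subspaces of (ii). The genuinely hard point remains the orbit--stabilizer lemma of the third paragraph; the remaining care is the $\tau=\aleph_0$ versus $\tau>\aleph_0$ split and the fact that, when $\tau$ has countable cofinality, $\Meas_{<\tau}(G)$ need not be norm-closed, which is why its closure must be used systematically.
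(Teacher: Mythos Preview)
Your proof is correct and the overall architecture matches the paper's. The one genuine methodological difference is in establishing that $\Meas_\tau(G)$ is an $L$-subspace. The paper reduces to showing $h\mu\in\Meas_\tau(G)$ for $h\in C_c(G)$ and then invokes Kakutani--Kodaira to produce a compact normal $N$ (in an open $\sigma$-compact subgroup) with metrizable quotient such that $h$ is right $N$-periodic, whence $h\mu\in\Meas\bigl(G/(K\cap N)\bigr)$. Your orbit--stabilizer argument is more direct and sidesteps Kakutani--Kodaira at this point: the $K$-orbit of the Radon--Nikod\'ym density $g=d\nu/d|\mu|$ in the normed space $L^1(|\mu|)$ is automatically compact metrizable, which forces $K/K'$ to be metrizable. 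Both routes yield a $K'\subseteq K$ with $\chi(G/K')\le\chi(G/K)+\aleph_0$, so the conclusions are identical. Likewise, your explicit identification $\aiMeast{\tau}(G)=\overline{\Meas_{<\tau}(G)}$ and appeal to the band decomposition of an $L$-space is a cleaner packaging of what the paper does by a bare-hands maximality argument (choosing $0\le\nu\le\mu$ in $\aiMeast{\tau}(G)$ with $\norm{\nu}$ maximal).

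One small wrinkle: in your sketch of $\aMeasG\subseteq\Meas_{\aleph_0}(G)$ you speak of a \emph{sequence} $N_n\downarrow\{e_G\}$ of compact normal subgroups. Such a sequence need not exist when $G$ is non-metrizable. What does work is the \emph{net} of compact normal $N$ in an open $\sigma$-compact $H\supseteq\mathrm{supp}\,\mu$ with $H/N$ metrizable: Kakutani--Kodaira guarantees that these $N$ form a neighbourhood base of $e_H$, and strong continuity of right translation on $\aMeas(H)$ then gives $\inf_N\norm{\mu-\mu\star\lambda_N}=0$, so $\mu$ lies in the closed space $\Meas_{\aleph_0}(G)$. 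This is exactly the paper's route for that inclusion; with this adjustment your argument is complete.
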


\Proof
$\Meas(G/K)$ is always a closed subspace of $\MeasG$ and a left ideal
(see Lemma~\ref{Identify}).
Using \thetag{2} of
Lemma \ref{lemma:basicproperties}, it follows that
$\Meas_\tau(G)$ is a closed subspace and then the same for
$\aiMeast{\tau}(G)$ and $\ssMeast{\tau}(G)$. This implies
(looking at the definitions) that
$\Meas_\tau(G)$ and $\aiMeast{\tau}(G)$ are left ideals.
Now take $\mu,\nu\iin\MeasG$ and let $H$ be an open $\sigma$-compact
subgroup containing the support of $\nu$. If $\mu\iin\Meas_\tau(G)$, then
by \thetag{2},\thetag{3} of
Lemma~\ref{lemma:basicproperties}, there exists a
normal subgroup $N$ of $H$ such that $N\iin\scrK_\tau$ and $\mu\iin\Meas(G/N)$.
As mentioned right after Lemma~\ref{Identify}, normality of $N$ implies
that $\Meas(G/N)$ is right \linebreak $H$-invariant. It follows that 
$\mu\conv\nu\iin\Meas(G/N)\subseteq\Meas_\tau(G)$. Thus $\Meas_\tau(G)$ is
a right ideal and a similar argument works for $\aiMeast{\tau}(G)$
\,(if $H$ is an open $\sigma$-compact subgroup of~$G$\,, the normal subgroups
in $H$ define a subset of $\scrK_\tau^\circ$ that is cofinal by
Lemma~\ref{lemma:basicproperties}\,\thetag{3}\,).
This proves \thetag{i}.

To show that $\Meas_\tau(G)$ is an L-subspace, it will be enough
(by closedness) to prove that $h\mu\iin\Meas_\tau(G)$ whenever
$\mu\iin\Meas_\tau(G)$ and $h$ is a continuous function of compact support.
As above let $H$ be an open $\sigma$-compact subgroup such that $h$
vanishes outside $H$. By the Kakutani-Kodaira theorem (see
Lemma\;\ref{lemma:KKthm} below), there exists a compact normal
subgroup $N$ of $H$ such that $H/N$ is metrizable and $h$ is $N$-periodic.
If $\mu\iin\Meas(G/K)$ with $K\iin\scrK_\tau$\,, we have
$h\mu\iin\Meas\bigl(G/(K\cap N)\bigr)$ and by 
\thetag{2} of Lemma~\ref{lemma:basicproperties}, $K\cap N\iin\scrK_\tau$\,.
It is easy to see that this also implies that $\ssMeast{\tau}(G)$
is an L-subspace. Similar arguments work for
$\aiMeast{\tau}(G)$ and show also that
\linebreak$\aMeasG\subseteq\Meas_{\aleph_0}(G)$.
\;$\ssMeast{\tau}(G)\perp\aiMeast{\tau}(G)$ follows from
the definition. Observe that
$\Meas_{\tau_1}(G)\subseteq\aiMeast{\tau}(G)$ holds for
$\tau_1<\tau$\,. Hence if $\mu\iin\Meas_\tau(G)$ and
$\mu\perp\aiMeast{\tau}(G)$ then $\mu\iin\sMeasG$ and combined
$\mu\iin\ssMeast{\tau}(G)$. Now take any $\mu\iin\Meas_\tau(G)$
with $\mu\ge0$. If there exists $\nu\iin\aiMeast{\tau}(G)$ such
that $\nu\ne0$ and $\nu$ is not singular to $\mu$, we can (decomposing and
taking advantage of the properties of an
L-subspace) assume that $0\le\nu\le\mu$\,. Then (using closedness and again
lattice properties) we can
find such a $\nu$ with $\norm{\nu}$ maximal and it follows that
$\mu-\nu\perp\aiMeast{\tau}(G)$. As mentioned above, this
implies $\mu-\nu\iin\ssMeast{\tau}(G)$. It follows that
$\Meas_\tau(G)=\ssMeast{\tau}(G)\oplus\aiMeast{\tau}(G)$
\ (for $\tau=\aleph_0$ one gets just the Lebesgue decomposition of elements
of $\Meas_{\aleph_0}(G)$ with respect\linebreak to $\lambda_G$).

The statements in \thetag{iii} about $\Meas(G/K)$ and its subspaces are easy
consequences of \thetag{i} and \thetag{ii}.
\end{proof}

\begin{corollary} \label{corollary:qIdentification}
If $K$ is a compact normal subgroup of $G$ that is not open, then the
isomorphism of Lemma\;\ref{Identify} maps the subspace \,$\ssMeas(G,K)$ onto
\,$\ssMeast{\chi(G/K)}(G/K)$ and \,$\aiMeas(G,K)$ onto
\,$\aiMeast{\chi(G/K)}(G/K)$.
\end{corollary}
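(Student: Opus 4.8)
The plan is to transport all of the relevant measure-theoretic structure through the isometric isomorphism $\Phi\colon\Meas(G,K)\to\Meas(G/K)$, $\mu\mapsto\dot\mu$, of Lemma~\ref{Identify}; throughout put $Q=G/K$ and $\tau=\chi(G/K)=\chi(Q)$. First I would observe that $\Phi$ is in fact a lattice isometry (the image measure of a positive measure is positive, and $\Phi\abs{\mu}=\abs{\dot\mu}$), hence an isomorphism of the underlying L-spaces: for $\mu,\nu\in\Meas(G,K)$ one has $\mu\perp\nu\iff\dot\mu\perp\dot\nu$ and $\mu\ll\nu\iff\dot\mu\ll\dot\nu$. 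Because $K$ is normal, the compact subgroups $L\supseteq K$ of $G$ correspond bijectively to the compact subgroups $\dot L=L/K$ of $Q$ through the homeomorphism $G/L\cong Q/\dot L$; thus $\chi(G/L)=\chi(Q/\dot L)$ and $L\in\scrK_{\tau_1}\iff\dot L\in\scrK_{\tau_1}(Q)$ for every cardinal $\tau_1$, while $\Phi$ carries $\Meas(G/L)$ onto $\Meas(Q/\dot L)$ and $\lambda_L$ to $\lambda_{\dot L}$, and normality yields $\lambda_K\conv\lambda_L=\lambda_{KL}=\lambda_L\conv\lambda_K$.

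My strategy is to translate the defining conditions of $\ssMeas(G,K)$ into the corresponding conditions on $Q$, and then to obtain the assertion about $\aiMeas$ for free by complementation. Recall that $\mu\in\ssMeas(G,K)$ means $\mu\in\Meas(G,K)$ with $\mu\perp\lambda_G$ and (when $\tau>\aleph_0$) $\mu\perp\Meas_{\tau_1}(G)$ for all $\tau_1<\tau$. For singularity to Haar measure, $\Phi$ maps $\aMeas(G,K)$ onto $\aMeas(Q)$ --- a right $K$-invariant density descends to $Q$ by Weil's formula, and conversely --- so being a lattice isomorphism it maps the complementary band $\sMeas(G,K)$ onto $\sMeas(Q)$; hence $\mu\perp\lambda_G\iff\dot\mu\perp\lambda_Q$. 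For the remaining conditions I would first establish
\[
\Meas_{\tau_1}(G)\cap\Meas(G,K)=\bigcup_{L\in\scrK_{\tau_1},\,L\supseteq K}\Meas(G/L)\,,
\]
using that $\mu=\mu\conv\lambda_K$ together with $\mu\in\Meas(G/L')$ for some $L'\in\scrK_{\tau_1}$ forces $\mu=\mu\conv\lambda_{KL'}$ with $KL'\supseteq K$ and $KL'\in\scrK_{\tau_1}$ (the character decreases as the subgroup grows, Lemma~\ref{lemma:basicproperties}(1)). Applying $\Phi$ and the correspondence above then gives $\Phi\bigl(\Meas_{\tau_1}(G)\cap\Meas(G,K)\bigr)=\Meas_{\tau_1}(Q)$.

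The key point, and the step I expect to be the main obstacle, is to reduce singularity to all of $\Meas_{\tau_1}(G)$ to singularity to its $K$-invariant part; that is, for $\mu\in\Meas(G,K)$ to prove
\[
\mu\perp\Meas_{\tau_1}(G)\iff\mu\perp\bigl(\Meas_{\tau_1}(G)\cap\Meas(G,K)\bigr)\,.
\]
The nontrivial implication rests on the elementary observation that if $\mu\in\Meas(G,K)$ (so $\abs{\mu}$ is right $K$-invariant) and $\nu\ge0$ with $\mu\perp(\nu\conv\lambda_K)$, then already $\mu\perp\nu$: picking a Borel set $A$ carrying $\abs{\mu}$ with $(\nu\conv\lambda_K)(A)=0$ forces $\nu(Ak^{-1})=0$ for $\lambda_K$-almost every $k$, while right $K$-invariance gives $\abs{\mu}(Ak^{-1})=\norm{\mu}$, so any such translate $Ak^{-1}$ witnesses $\mu\perp\nu$. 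Since $\nu\mapsto\nu\conv\lambda_K$ is a positive contraction carrying $\Meas_{\tau_1}(G)$ into $\Meas_{\tau_1}(G)\cap\Meas(G,K)$, reducing a general $\nu$ to $\abs{\nu}$ and applying this observation yields the displayed equivalence; combining it with the preceding paragraph and the singularity-preservation of $\Phi$ gives $\mu\perp\Meas_{\tau_1}(G)\iff\dot\mu\perp\Meas_{\tau_1}(Q)$. Together with the statement for $\lambda_G$, this proves $\Phi\bigl(\ssMeas(G,K)\bigr)=\ssMeast{\tau}(Q)$.

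Finally, the claim about $\aiMeas$ follows without ever invoking its defining net. By Theorem~\ref{th:measureclasses} both $\Meas(G/K)=\ssMeas(G,K)\oplus\aiMeas(G,K)$ and $\Meas(Q)=\Meas_\tau(Q)=\ssMeast{\tau}(Q)\oplus\aiMeast{\tau}(Q)$ are orthogonal direct sums, so in each the second summand is precisely the set of elements orthogonal to the first. As $\Phi$ is an onto lattice isometry that preserves mutual singularity and already carries $\ssMeas(G,K)$ onto $\ssMeast{\tau}(Q)$, it must carry $\aiMeas(G,K)$ onto $\aiMeast{\tau}(Q)$, completing the proof. What makes the third paragraph delicate is that the conditions defining $\ssMeas(G,K)$ are imposed in the ambient algebra $\MeasG$ and quantify over all measures, not merely the $K$-invariant ones; it is the normality of $K$ --- through $\lambda_K\conv\lambda_L=\lambda_{KL}$, the homeomorphism $G/L\cong Q/\dot L$, and the right $K$-invariance of the elements of $\Meas(G,K)$ --- that lets these external conditions descend to intrinsic conditions on $Q$.
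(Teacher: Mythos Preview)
Your argument is correct, and the overall architecture --- transport the singularity data through $\Phi$ and then deduce the $\aiMeas$ statement from the orthogonal direct sums of Theorem~\ref{th:measureclasses} --- is exactly the paper's. The one difference is in how the work is distributed. The paper only proves the \emph{easy} inclusion for each summand: from $\Phi^{-1}\bigl(\Meas_{\tau_1}(Q)\bigr)\subseteq\Meas_{\tau_1}(G)$ one gets $\Phi\bigl(\ssMeas(G,K)\bigr)\subseteq\ssMeast{\tau}(Q)$, and a parallel (direct) argument gives $\Phi\bigl(\aiMeas(G,K)\bigr)\subseteq\aiMeast{\tau}(Q)$; since $\Phi$ is onto and both sides carry the orthogonal decompositions of Theorem~\ref{th:measureclasses}\,(ii),(iii), these two one-sided inclusions force equality simultaneously. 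You instead prove \emph{both} inclusions for $\ssMeas$ directly and only then invoke the direct sum for $\aiMeas$. Your ``hard'' inclusion --- that $\mu\in\Meas(G,K)$ with $\mu\perp(\abs{\nu}\conv\lambda_K)$ already gives $\mu\perp\nu$ --- is precisely (a relabeled form of) the second assertion of Lemma~\ref{lemma:singular}, which appears later in the paper; so your route is self-contained but does strictly more than is needed here, while the paper's route is shorter because the direct-sum trick makes the hard inclusion redundant.
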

\begin{proof}
Let $\Phi(\mu)=\dot\mu$ be this isomorphism ($\mu\iin\Meas(G,K)$\,).
As mentioned earlier (the image measure of Haar measure is Haar measure),
\,$\Phi\bigl(\sMeasG\cap\Meas(G,K)\bigr)=\sMeas(G/K)$. For a compact subgroup
$L\supseteq K$\,, we have \,$\Phi\bigl(\Meas(G,L)\bigr)=\Meas(G/K,L/K)$ and it
follows that \,$\Phi^{-1}\bigl(\Meas_\tau(G/K)\bigr)\subseteq\Meas_\tau(G)$
\,holds for \,$\aleph_0\le\tau\le\chi(G/K)$. \,$\Phi$~being order preserving,
this implies that
\,$\Phi^{-1}\bigl(\ssMeast{\chi(G/K)}(G/K)\bigr)\supseteq\ssMeas(G,K)$ \,and
similarly
\,$\Phi^{-1}\bigl(\aiMeast{\chi(G/K)}(G/K)\bigr)\supseteq\aiMeas(G,K)$. But
then by \thetag{ii},\;\thetag{iii} of Theorem~\ref{th:measureclasses}, these
inclusions must be equalities.
\end{proof}

\begin{Rems}
From \thetag{ii} of the last theorem, the following decomposition
results: \,Every $\mu\iin\sMeasG$ has a unique (up to
reorderings) representation $\mu=\sum\mu_i$\,, where \linebreak
$\mu_i\iin\ssMeast{\tau_i}(G)$ for some pairwise different cardinals
$\tau_i$ with \,$\aleph_0\le\tau_i\le\chi(G)$. Thus $\sMeasG$ is the
$l^1$-sum of all the spaces $\ssMeast{\tau}(G)$ with
\,$\aleph_0\le\tau\le\chi(G)$. Similarly, for $\aleph_0\le\tau\le\chi(G)$,
\,$\aiMeast{\tau}(G)$ is the $l^1$-sum of $\aMeasG$ and all the spaces
$\ssMeast{\tau'}(G)$ with $\aleph_0\le\tau'<\tau$.

As mentioned in the proof of \thetag{ii} above,
$\aMeasG\subseteq\Meas_{\aleph_0}(G)$ holds. It follows that in the
definition of $\ssMeast{\tau}(G)$, the condition $\mu\iin\sMeasG$
is redundant for $\tau>\aleph_0$\,.

For a totally disconnected group $G$ one can still show that
\,$\mu=\lim_{K\iin\,\scrK_{\aleph_0}^\circ} \mu\conv\lambda_K$ for
$\mu\iin\aMeasG$\,. But for other groups $G$ this is no longer true, since
$\scrK_{\aleph_0}^\circ$ (these are just the open compact subgroups)
is not sufficiently large. It may even happen (e.g. for $G=\Rgroup$) 
that $\scrK_{\aleph_0}^\circ$ is empty. 
\end{Rems}\vspace{1mm plus 2mm}

\section{Separation of strongly singular measures}
    \label{sec:strsingularSeparation}

In this section we extend Theorem \ref{th:perfectset} to the case of
strongly singular measures (Theorem~\ref{th:separationss}).
This will be used
in the proof of the general case of the Main Theorem.
Recall the Kakutani-Kodaira theorem (see \cite{Hu} Theorem 3 for a more
general version, a more special case\,--\,compactly generated
groups\,--\,which would be
sufficient for our purpose (when modifying slightly some arguments) is in
\cite[8.7]{Hewitt1963aha}).
\begin{lemma}
    \label{lemma:KKthm}
Let $H$ be a locally compact, $\sigma$-compact group.
If $U_n$, \,$n<\omega$, are neighbourhoods of $e_H$ then there exists
a compact normal subgroup $N$ of~$H$ such that $N\subseteq U_n$ for all $n$
and \,$H/N$ is metrizable.
\end{lemma}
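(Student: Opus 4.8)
The plan is to construct $N$ as the intersection of a carefully chosen decreasing sequence of compact symmetric neighbourhoods of $e_H$, arranging at each stage both that the eventual intersection lands inside the prescribed $U_n$ and that it is invariant under conjugation. First I would make two harmless reductions: replacing $U_n$ by $U_0\cap\dots\cap U_n$, assume the sequence $(U_n)$ is decreasing; and, using $\sigma$-compactness of $H$, fix an increasing sequence of compact sets $K_n$ with $\bigcup_n K_n=H$ and $e_H\in K_0$. I would then build compact symmetric neighbourhoods $V_0\supseteq V_1\supseteq\cdots$ of $e_H$ by induction.

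For the inductive step, given $V_n$ I first invoke continuity of the conjugation map $(x,v)\mapsto xvx^{-1}$ together with compactness of $K_n$ (a tube-lemma argument, since $K_n\times\{e_H\}$ is mapped into the interior of $V_n$) to produce a neighbourhood $A$ of $e_H$ with $x\,A\,x^{-1}\subseteq V_n$ for all $x\in K_n$; this is in the spirit of the conjugation device used in Lemma~\ref{lemma:basicproperties}\,(3). I then choose a compact symmetric neighbourhood $V_{n+1}$ small enough that $V_{n+1}\subseteq U_{n+1}\cap A$ and $V_{n+1}^3\subseteq V_n$. Setting $N=\bigcap_n V_n$, symmetry of the $V_n$ together with $V_{n+1}^2\subseteq V_n$ shows at once that $N$ is closed under products and inverses, so $N$ is a compact subgroup, and $N\subseteq V_n\subseteq U_n$ for every $n$.

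Normality is the point where $\sigma$-compactness is genuinely used, and I expect it to be the only real obstacle. Given $a\in N$ and $x\in H$, pick $n$ with $x\in K_n$; then for every $m\ge n$ we have $x\in K_m$ and $a\in V_{m+1}$, so $x\,a\,x^{-1}\in x\,V_{m+1}\,x^{-1}\subseteq V_m$, and since the $V_m$ decrease this yields $x\,a\,x^{-1}\in N$. Finally, to see that $H/N$ is metrizable it suffices, by the criterion recalled in Section~\ref{sec:notation} (or Birkhoff--Kakutani, as $N$ is compact normal so $H/N$ is a locally compact group), to show $\chi(H/N)\le\aleph_0$. Writing $\pi\colon H\to H/N$ for the quotient map, I claim the countable family $\{\pi(V_n)\}$ is a neighbourhood base at $eN$: if $\mathcal O\subseteq H/N$ is open with $eN\in\mathcal O$, then $V_0\setminus\pi^{-1}(\mathcal O)$ is compact, the sets $V_n\setminus\pi^{-1}(\mathcal O)$ decrease, and their intersection $N\setminus\pi^{-1}(\mathcal O)=\emptyset$, so some $V_n\subseteq\pi^{-1}(\mathcal O)$ and hence $\pi(V_n)\subseteq\mathcal O$. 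Since $\pi$ is open, each $\pi(V_n)$ is a neighbourhood of $eN$, so $H/N$ is first countable at the identity, hence (by homogeneity) first countable, and therefore metrizable.
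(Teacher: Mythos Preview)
Your argument is correct and is essentially the classical textbook proof of the Kakutani--Kodaira theorem. The paper, however, does not prove this lemma at all: it merely records the statement and refers the reader to \cite{Hu} (for a general version) and \cite[8.7]{Hewitt1963aha} (for the compactly generated case). So you have supplied a full proof where the paper only gives a citation.

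One cosmetic point: you describe the inductive step but never say how $V_0$ is chosen; of course one simply takes any compact symmetric neighbourhood of $e_H$ contained in $U_0$, and this is implicit in your claim that $N\subseteq V_n\subseteq U_n$ for every $n$. Otherwise the construction is exactly the standard one (the conjugation step via the tube lemma, the condition $V_{n+1}^3\subseteq V_n$ to force a subgroup, and the compactness argument for first countability of the quotient), and everything checks.
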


\begin{lemma}
    \label{lemma:compactSubgroup}
If $G$ is any locally compact group then
$G$ has a compact subgroup $K_G$ such that $G/K_G$ is metrizable.
If $G$ is non-metrizable, then for any such group it follows that
\,$\chi(K_G) = \chi(G)$. If $G$ is $\sigma$-compact, $K_G$ can be chosen to
be normal.
\end{lemma}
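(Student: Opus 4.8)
The plan is to obtain all three assertions from the Kakutani--Kodaira theorem (Lemma~\ref{lemma:KKthm}) together with the character arithmetic of Lemma~\ref{lemma:basicproperties}\,(1), reducing the general case to the $\sigma$-compact one by passing to an open subgroup.

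First I would treat existence and the normality claim simultaneously in the $\sigma$-compact case. If $G$ is $\sigma$-compact, I apply Lemma~\ref{lemma:KKthm} to $H=G$ with any countable family of neighbourhoods of $e_G$, obtaining a compact \emph{normal} subgroup $N$ of $G$ with $G/N$ metrizable; this $N$ serves as $K_G$ and is normal, settling the last assertion. For a general locally compact $G$, I first choose an open $\sigma$-compact subgroup $H$ (for instance $H=\bigcup_n V^n$ with $V$ a compact symmetric neighbourhood of $e_G$). Applying Lemma~\ref{lemma:KKthm} to $H$ yields a compact normal subgroup $N$ of $H$ with $H/N$ metrizable; in particular $N$ is a compact subgroup of $G$. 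To transfer metrizability to $G/N$ I invoke the open-subgroup identity for the character recorded after Corollary~\ref{corollary:basicproperties:classes}: since $N\subseteq H$ and $H$ is open, $\chi(G/N)=\chi(H/N)\le\aleph_0$, and as $N$ is compact this forces $G/N$ metrizable. Hence $K_G=N$ works and existence holds in general.

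Next I would establish the character equality. Let $K_G$ be \emph{any} compact subgroup with $G/K_G$ metrizable, and assume $G$ is non-metrizable. Applying Lemma~\ref{lemma:basicproperties}\,(1) with $H_0=K_G$ and $H_1=\{e_G\}$ (both closed, $H_0\supseteq H_1$) gives
\[
\chi(G)=\chi(G/\{e_G\})=\max\bigl(\chi(G/K_G),\,\chi(K_G)\bigr).
\]
Here $\chi(G/K_G)\le\aleph_0$ since $G/K_G$ is metrizable, while $\chi(G)>\aleph_0$ since $G$ is non-metrizable. Thus the maximum cannot be attained by $\chi(G/K_G)$, which forces $\chi(K_G)=\chi(G)$, as claimed.

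The only step calling for any care is the reduction to the $\sigma$-compact case, since Lemma~\ref{lemma:KKthm} is available only for $\sigma$-compact groups: for non-$\sigma$-compact $G$ one must descend to an open $\sigma$-compact subgroup $H$ and then lift metrizability of $H/N$ back to $G/N$. The identity $\chi(G/N)=\chi(H/N)$ does exactly this, so there is no genuine obstacle, and all three statements follow by combining the two cited lemmas.
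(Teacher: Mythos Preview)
Your proof is correct and follows essentially the same route as the paper: reduce to an open $\sigma$-compact subgroup, apply the Kakutani--Kodaira theorem (Lemma~\ref{lemma:KKthm}) there, and then use Lemma~\ref{lemma:basicproperties}\,(1) with $H_0=K_G$, $H_1=\{e_G\}$ to obtain the character equality. The only cosmetic difference is that you make the lifting step $\chi(G/N)=\chi(H/N)$ explicit via the identity after Corollary~\ref{corollary:basicproperties:classes}, whereas the paper simply asserts that metrizability of $G_0/K_G$ implies metrizability of $G/K_G$ (relying on $G_0$ being open).
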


\Proof
Take any symmetric compact neighbourhood $U$ of $e_G$\,.
Then \,$G_0  = \bigcup_{n=1}^\infty U^n$ is an open subgroup of $G$
(cf.~\cite[5.7]{Hewitt1963aha}), hence \,$\chi(G_0 )=\chi(G)$.
Since $G_0$ is compactly generated, by Lemma~\ref{lemma:KKthm} there is a
compact normal subgroup $K_G$ of $G_0$ such that $G_0 /K_G$ is metrizable
and thus $G/K_G$ is metrizable.
\\
If $\chi(G)>\aleph_0$, then $\chi(K_G)=\chi(G)$
since \,$\chi(G)=\max\bigl(\chi(G/ K_G), \chi(K_G)\bigr)$ by \linebreak
Lemma~\ref{lemma:basicproperties}.
\end{proof}

\begin{lemma}
    \label{lemma:gensaturation}
Let $G$ be a $\sigma$-compact, locally compact group and
$\nu,\nu'\iin\MeasG$,\linebreak $\nu,\nu'\geq 0$.
If $\nu\perp\nu'$ then there exist a~$\mathsf{K_\sigma}$-set
$E\subseteq G$ and a compact normal subgroup $N$ of $G$
such that $\nu(E)=\nu(G)$, $\nu'(EN)=0$ and $\chi(G/N)\leq\aleph_0$.
In particular, $\nu\conv\lambda_N\perp\nu'\conv\lambda_N$\,.\\
If $V$ is an $e_G$-neighbourhood in $G$,  we can find such an
$N$ with $N\subseteq V$\,.
\end{lemma}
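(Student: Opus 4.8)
The plan is to obtain the subgroup $N$ from the Kakutani--Kodaira theorem (Lemma~\ref{lemma:KKthm}), after preparing a countable family of $e_G$-neighbourhoods dictated by the regularity of $\nu$ and $\nu'$. First, since $\nu\perp\nu'$, I would fix a Borel set $A\subseteq G$ with $\nu(G\setminus A)=0=\nu'(A)$. By inner regularity of the finite Radon measure $\nu$, I would choose compact sets $C_n\subseteq A$ with $\nu(G\setminus C_n)\le 2^{-n}$; note that $\nu'(C_n)=0$ because $C_n\subseteq A$. By outer regularity of $\nu'$ there is an open set $O_n\supseteq C_n$ with $\nu'(O_n)<2^{-n}$, and compactness of $C_n$ then yields a neighbourhood $U_n$ of $e_G$ (which I would take inside $V$ for the last assertion) such that $C_nU_n\subseteq O_n$, whence $\nu'(C_nU_n)<2^{-n}$.

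Next, I would feed the countable family $\{U_n\}$ (together with $V$) into Lemma~\ref{lemma:KKthm} for the $\sigma$-compact group $G$, obtaining a compact normal subgroup $N\subseteq\bigcap_nU_n$ with $G/N$ metrizable, i.e.\ $\chi(G/N)\le\aleph_0$ since $N$ is compact. Then $N\subseteq U_n$ gives $C_nN\subseteq C_nU_n\subseteq O_n$, so $\nu'(C_nN)<2^{-n}$ for every $n$. The key device to upgrade these small bounds to an exact vanishing is to pass to the nested intersections: set $D_n=\bigcap_{m\ge n}C_m$ and $E=\bigcup_n D_n$, a $\mathsf{K_\sigma}$-set. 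Since $\nu(G\setminus D_n)\le\sum_{m\ge n}\nu(G\setminus C_m)\le 2^{-n+1}$, we get $\nu(E)=\nu(G)$, while $D_n\subseteq C_m$ for all $m\ge n$ forces $D_nN\subseteq C_mN$ and hence $\nu'(D_nN)\le\nu'(C_mN)<2^{-m}$ for every $m\ge n$; letting $m\to\infty$ gives $\nu'(D_nN)=0$, so $\nu'(EN)=\nu'(\bigcup_n D_nN)=0$. This produces $E$ and $N$ with the required properties.

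For the final assertion I would work with the right $N$-invariant set $B=EN$, which satisfies $\nu(B)=\nu(G)$ and $\nu'(B)=0$. For a positive $\rho\iin\MeasG$ and a right $N$-invariant Borel set $S$ (so $SN=S$) one has $(\rho\conv\lambda_N)(S)=\int_G\lambda_N(x^{-1}S)\,d\rho(x)=\rho(S)$, because $\lambda_N(x^{-1}S)$ equals $1$ when $x\iin S$ (then $xN\subseteq S$, so $N\subseteq x^{-1}S$ and $\lambda_N(N)=1$) and equals $0$ when $x\notin S$ (then $xN\cap S=\emptyset$); this uses only Fubini for non-negative functions and $\lambda_N(N)=1$. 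Applying this with $S=B$ to $\rho=\nu$ and $\rho=\nu'$ shows that $\nu\conv\lambda_N$ is concentrated on $B$ while $\nu'\conv\lambda_N$ is concentrated on $G\setminus B$, so $\nu\conv\lambda_N\perp\nu'\conv\lambda_N$.

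I expect the only genuine obstacle to be the exact identity $\nu'(EN)=0$: thickening a single compact set $C_n$ by $N$ only guarantees small $\nu'$-mass, not zero, and the nested intersections $D_n=\bigcap_{m\ge n}C_m$ are exactly what convert the summable estimates $\nu'(C_nN)<2^{-n}$ into a genuine $\nu'$-null set. Everything else is routine regularity of Radon measures, a single application of Kakutani--Kodaira to produce one $N$ below all the $U_n$, and the elementary computation of $\rho\conv\lambda_N$ on $N$-invariant sets.
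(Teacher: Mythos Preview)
Your proof is correct and follows the same overall strategy as the paper's: choose compact sets carrying the $\nu$-mass, use outer regularity of $\nu'$ and compactness to produce a countable family of $e_G$-neighbourhoods, apply Kakutani--Kodaira once to get $N$, and then conclude $\nu'(EN)=0$. The only difference is bookkeeping. The paper uses a \emph{double-indexed} family: for each compact $C_n$ it takes open sets $W_{n,m}\supseteq C_n$ with $\nu'(W_{n,m})<2^{-m}$ and corresponding neighbourhoods $U_{n,m}$, then chooses $N\subseteq\bigcap_{n,m}U_{n,m}\cap V$. This yields $\nu'(C_nN)<2^{-m}$ for \emph{every} $m$, so $\nu'(C_nN)=0$ directly, and the original $E=\bigcup_n C_n$ already works---no nested intersections are needed. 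Your single-indexed family gives only $\nu'(C_nN)<2^{-n}$, which is why you need the extra passage to $D_n=\bigcap_{m\ge n}C_m$; this is a perfectly valid device, but the paper's double indexing avoids it at the cost of a slightly larger (still countable) family fed into Kakutani--Kodaira. Your verification that $(\rho\conv\lambda_N)(S)=\rho(S)$ for right $N$-invariant $S$ is exactly what the paper uses implicitly in its last line.
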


\Proof
Since $\nu\perp\nu'$, there is a~$\mathsf{K_\sigma}$-set $E\subseteq G$
such that $\nu(E)=\nu(G)$ and $\nu'(E)=0$.
Thus there are compact sets $C_n$ such that \,$E=\bigcup_{n=1}^\infty C_n$\,.
For every $n$ and $m$ there exists an open set $W_{n,m} \supseteq C_n$
satisfying \,$\nu'(W_{n,m} )< 2^{-m}$.
Since $C_n$ are compact, there are neighbourhoods $U_{n,m}$ of $e_G$ with
\,$C_n U_{n,m} \subseteq W_{n,m}$\,.
By Lemma~\ref{lemma:KKthm} there is a compact normal subgroup $N$ of $G$ such
that
\,$N\subseteq\bigcap_{n=1}^\infty \bigcap_{m=1}^\infty U_{n,m}\cap V$
and $\chi(G/N)\leq\aleph_0$\,.
Then \,$\nu'(C_n N)\leq\nu'(C_n U_{n,m})\leq
\nu'(W_{n,m})< 2^{-m}$ for every $m$, giving
$\nu'(C_n N)=0$ for every $n$.
Since \,$EN = \bigcup_{n=1}^\infty C_n N$, it follows that
$\nu'(EN) =0$. Thus $\nu$ and $\nu\conv\lambda_N$ are
concentrated on $EN$\,, whereas
\,$\nu'\conv\lambda_N(EN)=\nu'(EN)=0$\,.
\end{proof}

\begin{lemma}
    \label{lemma:subsaturation}
Let $G$ be a $\sigma$-compact, locally compact group such that
$\chi(G)> \aleph_0$\,,
$K_1$ a normal subgroup of $G$ with \,$K_1\iin\ssubgr$\,, \,$V$ an
$e_G$-neigh\-bourhood,
and let\linebreak $\mu\iin\ssMeast{\chi(G)}(G)$ with $\mu\geq0$, $\mu(G)=1$.
Then there exists
a normal subgroup $K_2$ of $G$ with \,$K_2\iin\ssubgr$
and a~$\mathsf{K_\sigma}$-set $E\subseteq G$
such that \,$K_2\subseteq K_1\cap V$, \,$\mu(E)=1$\,,
\,$\mu\conv\lambda_{K_1}(EK_2)=0$ \,and $K_1/K_2$ is metrizable.
\end{lemma}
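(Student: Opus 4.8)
The plan is to exploit the defining feature of strong singularity --- that $\mu$ is mutually singular to every measure lying in a class $\Meas_{\tau_1}(G)$ with $\tau_1<\chi(G)$ --- and then to feed a suitable singular pair into the saturation Lemma~\ref{lemma:gensaturation}.

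First I would record the crucial observation that $\mu\perp(\mu\conv\lambda_{K_1})$. Since $\lambda_{K_1}$ is idempotent, $\mu\conv\lambda_{K_1}$ satisfies $(\mu\conv\lambda_{K_1})\conv\lambda_{K_1}=\mu\conv\lambda_{K_1}$, hence lies in $\Meas(G,K_1)=\Meas(G/K_1)$ and therefore in $\Meas_{\tau_1}(G)$ with $\tau_1=\max(\chi(G/K_1),\aleph_0)$. Because $K_1\iin\ssubgr$ we have $\chi(G/K_1)<\chi(G)$, and $\aleph_0<\chi(G)$ by hypothesis, so $\tau_1<\chi(G)$. The membership $\mu\iin\ssMeast{\chi(G)}(G)$ then forces $\mu\perp(\mu\conv\lambda_{K_1})$; note both measures are non-negative since $\mu\ge0$ and $\lambda_{K_1}\ge0$. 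This step is the only genuine content of the lemma: recognising that strong singularity of $\mu$ is precisely what makes it singular against its own right $K_1$-average.

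Next I would apply Lemma~\ref{lemma:gensaturation} to the mutually singular non-negative pair $\nu=\mu$ and $\nu'=\mu\conv\lambda_{K_1}$, taking the neighbourhood there to be $V$. This produces a $\mathsf{K_\sigma}$-set $E$ with $\mu(E)=\mu(G)=1$ together with a compact normal subgroup $N$ of $G$ satisfying $N\subseteq V$, $\chi(G/N)\le\aleph_0$ and $(\mu\conv\lambda_{K_1})(EN)=0$.

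Finally I would set $K_2=K_1\cap N$ and verify the four requirements. As an intersection of compact normal subgroups, $K_2$ is compact and normal, and $K_2\subseteq N\subseteq V$ together with $K_2\subseteq K_1$ gives $K_2\subseteq K_1\cap V$. Part~(2) of Lemma~\ref{lemma:basicproperties} bounds $\chi(G/K_2)\le\max(\chi(G/K_1),\chi(G/N))+2$, which is $<\chi(G)$ because both terms in the maximum are $<\chi(G)$ and the finite constant is absorbed by the infinite cardinal $\max(\chi(G/K_1),\aleph_0)$; thus $K_2\iin\ssubgr$. Metrizability of $K_1/K_2$ follows from the topological isomorphism $K_1/(K_1\cap N)\cong K_1N/N$, which realises the compact group $K_1/K_2$ as an embedded subgroup of the metrizable group $G/N$. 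Lastly, since $K_2\subseteq N$ we have $EK_2\subseteq EN$, so non-negativity yields $(\mu\conv\lambda_{K_1})(EK_2)\le(\mu\conv\lambda_{K_1})(EN)=0$, while $\mu(E)=1$ is already in hand. Beyond the first observation, everything is routine bookkeeping with the character arithmetic of Lemma~\ref{lemma:basicproperties} and the behaviour of quotients by a compact normal subgroup; the only point to handle with mild care is confirming that passing from $N$ to $K_1\cap N$ simultaneously keeps $K_2$ inside $\ssubgr$ and keeps $K_1/K_2$ metrizable, both of which flow from $\chi(G/N)\le\aleph_0$.
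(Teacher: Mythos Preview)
Your proof is correct and follows essentially the same route as the paper: observe $\mu\perp\mu\conv\lambda_{K_1}$ from the definition of strong singularity, apply Lemma~\ref{lemma:gensaturation} to this pair with neighbourhood $V$, and set $K_2=K_1\cap N$. The only cosmetic difference is that the paper verifies $K_2\iin\ssubgr$ via part~(1) of Lemma~\ref{lemma:basicproperties} (using $\chi(K_1/K_2)\le\aleph_0$ from metrizability) rather than part~(2), and leaves the inclusion $EK_2\subseteq EN$ implicit; your version is slightly more explicit but otherwise identical.
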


\Proof
Since $K_1\iin\ssubgr$\,, we have \,$\mu\perp\mu\conv\lambda_{K_1}$\,.
Let $N$ be as in Lemma~\ref{lemma:gensaturation}
with $\nu=\mu$ and $\nu'=\mu\conv\lambda_{K_1}$\,.
Put $K_2=K_1\cap N$\,. Recall (\cite{Hewitt1963aha}\;Thm.\,5.33) that
$K_1/K_2$ is topologically isomorphic to $K_1N/N$\,.
Since $\chi(G/N)\leq\aleph_0$\,, we get that $K_1/K_2$ is metrizable and
by \thetag{1} of Lemma~\ref{lemma:basicproperties} that $K_2 \iin\ssubgr$.
\end{proof}
\begin{corollary}
    \label{cor:onestep}
For $K_1,\,K_2,\,\mu$ as above, the following properties hold:
\begin{enumerate}
\item
\label{lemma:onestepsing:sing} \
$\mu\conv\lambda_{K_1} \perp \mu\conv\lambda_{K_2}$\,,
\item
\label{lemma:onestepsing:almostall} \
$\mu\conv K_2 \perp \mu\conv xK_2$ for $\lambda_{K_1}$-almost all
$x\iin K_1$\,,
\item
\label{lemma:onestepsing:alephzero} \
$\chi(K_1/K_2)=\aleph_0$ \,and \,$K_2\iin\ssubgr$\,.
\end{enumerate}
\end{corollary}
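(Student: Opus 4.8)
The plan is to read all three assertions off the output of Lemma~\ref{lemma:subsaturation}, which provides a normal subgroup $K_2$ of $G$ with $K_2\iin\ssubgr$, $K_2\subseteq K_1\cap V$ and $K_1/K_2$ metrizable, together with a $\mathsf{K_\sigma}$-set $E$ satisfying $\mu(E)=1$ and $\mu\conv\lambda_{K_1}(EK_2)=0$. First I would record the elementary facts used throughout: since $K_1$ and $N$ are both normal in $G$ and $K_2=K_1\cap N$, the subgroup $K_2$ is normal in $G$, so $EK_2$ is right $K_2$-invariant (i.e. $EK_2k=EK_2$ for $k\iin K_2$); moreover $EK_2$ is a $\mathsf{K_\sigma}$-set, hence Borel, and $\mu(EK_2)=1$ because $\mu(E)=1=\mu(G)$ and $\mu\geq0$. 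For assertion (1) I would then note that $\mu\conv\lambda_{K_2}$ is concentrated on $EK_2$: each $\mu\conv\delta_k$ with $k\iin K_2$ is carried by $Ek\subseteq EK_2$, so the average $\mu\conv\lambda_{K_2}$ is carried by $EK_2$ as well, while $\mu\conv\lambda_{K_1}(EK_2)=0$. Thus the two measures live on disjoint Borel sets, giving $\mu\conv\lambda_{K_1}\perp\mu\conv\lambda_{K_2}$.

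For assertion (2) I would disintegrate $\mu\conv\lambda_{K_1}=\int_{K_1}\mu\conv\delta_x\,d\lambda_{K_1}(x)$. The vanishing $\mu\conv\lambda_{K_1}(EK_2)=0$ becomes $\int_{K_1}\mu\conv\delta_x(EK_2)\,d\lambda_{K_1}(x)=0$ with a non-negative (measurable, as in the Fubini computation of Lemma~\ref{lemma:smalltranslation}) integrand, so $\mu\conv\delta_x(EK_2)=0$ for $\lambda_{K_1}$-almost all $x\iin K_1$. For such an $x$ and any $k,k'\iin K_2$, the measure $\mu\conv k$ is concentrated on $EK_2$, whereas $\mu\conv xk'(EK_2)=\mu\bigl(EK_2(xk')^{-1}\bigr)=\mu(EK_2x^{-1})=\mu\conv\delta_x(EK_2)=0$, using normality to replace $EK_2(k')^{-1}$ by $EK_2$. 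Hence $\mu\conv k\perp\mu\conv xk'$ for all $k,k'\iin K_2$, which is exactly $\mu\conv K_2\perp\mu\conv xK_2$ for almost all $x$.

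For assertion (3) the membership $K_2\iin\ssubgr$ and the metrizability of $K_1/K_2$ are already delivered by Lemma~\ref{lemma:subsaturation}, and metrizability gives $\chi(K_1/K_2)\le\aleph_0$; it remains to exclude that $K_1/K_2$ is finite. If $K_2$ had finite index $n$ in $K_1$, then choosing coset representatives $x_1=e_G,\dots,x_n$ I could write $\lambda_{K_1}=\tfrac1n\sum_{i=1}^n\lambda_{K_2}\conv\delta_{x_i}$, one summand being $\lambda_{K_2}$ itself, so $\mu\conv\lambda_{K_1}\ge\tfrac1n\,\mu\conv\lambda_{K_2}\ge0$. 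Together with (1) this forces $\mu\conv\lambda_{K_2}=0$, contradicting $\mu\conv\lambda_{K_2}(G)=\mu(G)=1$. Therefore $K_1/K_2$ is infinite and $\chi(K_1/K_2)=\aleph_0$.

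The argument is short because the real work sits in Lemma~\ref{lemma:subsaturation}; the one point that requires care is the passage in (2) from the averaged vanishing of $\mu\conv\lambda_{K_1}$ on $EK_2$ to the set-theoretic mutual singularity of the entire orbits $\mu\conv K_2$ and $\mu\conv xK_2$. Here normality of $K_2$ is precisely what makes $EK_2$ right $K_2$-invariant, and that invariance is what allows the single null set $EK_2$ to separate every pair of translates simultaneously rather than merely almost every pair.
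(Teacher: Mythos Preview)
Your argument is correct and mirrors the paper's: both integrate $\mu\conv\lambda_{K_1}(EK_2)=0$ over $K_1$ to obtain $\mu\conv x(EK_2)=0$ for almost every $x$, then separate the translates using (subsets of) $EK_2$; the paper uses the slightly smaller sets $Ey'$ and derives (3) from (2) rather than from (1), but these are cosmetic differences. One small correction: the right $K_2$-invariance $EK_2k=EK_2$ that you invoke three times does \emph{not} rely on normality of $K_2$---it holds simply because $K_2$ is a group (so $K_2k=K_2$ for $k\in K_2$); normality of $K_2$ plays no role in this corollary, only later in Theorem~\ref{th:separationss}.
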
\noindent
We use here the notations from the beginning of
Section~\ref{sec:factorization}.

\Proof
We have
\[
\int_{K_1} \mu\conv x(EK_2)\,d\lambda_{K_1} (x) =
\int_{K_1} \mu(EK_2x^{-1})\,d\lambda_{K_1} (x)
=\mu\conv\lambda_{K_1} (EK_2)=0
\]
and therefore \,$\mu\conv x(EK_2)=0$ for $\lambda_{K_1}$-almost all
$x\iin K_1$\,.
\\
Take any $x\iin K_1$ such that \,$\mu\conv x(EK_2)=0$ and any
$y,y'\iin K_2$\,. Then
\[
\mu\conv x y(Ey') =  \mu\conv x \conv y(Ey')
= \mu\conv x (Ey' y^{-1}) \leq \mu\conv x (EK_2) = 0 \; ,
\]
while \,$\mu\conv y'(Ey')=\mu(E)=1$
which shows that \,$\mu\conv y' \perp \mu\conv xy$\,. Thus
\,$\mu\conv K_2 \perp \mu\conv xK_2$\,.
If $K_2$ were open in $K_1$\,, then \,$\lambda_{K_1}(K_2)>0$ and we would
get \,$\mu\conv K_2 \perp \mu\conv K_2$\,. Thus $K_1/K_2$ must be non-discrete,
giving \,$\chi(K_1/K_2)=\aleph_0$\,.
\end{proof}\vspace{1.5mm}

As usual, a cardinal number $\tau$ will be identified with the minimal
ordinal number~$\alpha$ for which $\card{\alpha}=\tau$\,.\vspace{-1.5mm}

\begin{lemma}
    \label{lemma:induction}
Let $G$ be a $\sigma$-compact, locally compact group, $K_0$ a compact
normal subgroup such that \,$\chi(G)>\chi(G/K_0)+\aleph_0$
and $\mu\iin\ssMeast{\chi(G)}(G)$ with $\mu\geq0$, $\mu(G)=1$.
Then for \,$0<\alpha\le\chi(G)$ there exist compact normal
subgroups $K_\alpha$ of $G$ and $x_\alpha\iin K_\alpha$ such that\vspace{-1mm}
\begin{itemize}
\item[(i)] \
$K_\alpha\supseteq K_{\alpha+1}$ and
\,$\chi(K_{\alpha}/K_{\alpha+1})=\aleph_0$ for all $\alpha<\chi(G)$;
\item[(ii)] \
$K_\beta = \bigcap_{\alpha<\beta} K_\alpha$ for all limit ordinals
\,$\beta\le\chi(G)$;
\item[(iii)] \
$ K_{\chi(G)}=\{e_G\}$;
\item[(iv)] \
$\mu\conv K_{\alpha+1}\perp\mu\conv x_\alpha K_{\alpha+1}$ \;and
\;$\mu\conv\lambda_{K_\alpha} \perp \mu\conv\lambda_{K_{\alpha+1}}$
\,for all $\alpha<\chi(G)$.
\end{itemize}
\end{lemma}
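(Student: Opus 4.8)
The plan is to construct the subgroups $K_\alpha$ and the elements $x_\alpha$ by transfinite recursion on $\alpha\le\chi(G)$, feeding each successor step through Lemma~\ref{lemma:subsaturation} and Corollary~\ref{cor:onestep}, letting~(ii) dictate the limit steps, and engineering the descent to $\{e_G\}$ by means of a neighbourhood base chosen in advance. First I would fix a base $\{W_\alpha:\alpha<\chi(G)\}$ of open neighbourhoods of $e_G$ with $\bigcap_{\alpha<\chi(G)}W_\alpha=\{e_G\}$; such a base of cardinality $\chi(G)$ exists because $\chi(G)$ is the local weight and, $G$ being Hausdorff, the intersection of any neighbourhood base at $e_G$ is $\{e_G\}$. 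The invariant I would carry along the recursion is that each $K_\alpha$ is a compact normal subgroup lying in $\ssubgr$, and that (i) and (ii) hold for all indices below $\alpha$.

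At a successor stage, with $K_\alpha\iin\ssubgr$ in hand, I would apply Lemma~\ref{lemma:subsaturation} with $K_1=K_\alpha$ and $V=W_\alpha$, obtaining $K_{\alpha+1}$ as the resulting $K_2$: a compact normal subgroup in $\ssubgr$ with $K_{\alpha+1}\subseteq K_\alpha\cap W_\alpha$. Corollary~\ref{cor:onestep} then yields at one stroke $\chi(K_\alpha/K_{\alpha+1})=\aleph_0$, which is~(i), together with $\mu\conv\lambda_{K_\alpha}\perp\mu\conv\lambda_{K_{\alpha+1}}$ and the fact that $\mu\conv K_{\alpha+1}\perp\mu\conv xK_{\alpha+1}$ for $\lambda_{K_\alpha}$-almost all $x\iin K_\alpha$; since the latter set has full measure it is nonempty, and I would pick $x_\alpha\iin K_\alpha$ in it, which completes~(iv). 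At a limit stage $\beta$ there is no choice: (ii) forces $K_\beta=\bigcap_{\alpha<\beta}K_\alpha$, again a compact normal subgroup. To see that the invariant $K_\alpha\iin\ssubgr$ survives, I would invoke Corollary~\ref{corollary:basicproperties:monotone}, whose hypotheses are precisely (i) and (ii) for the part of the tower already built: it gives $\chi(G/K_\alpha)=\card{\alpha}+\chi(G/K_0)+\aleph_0$, and here the hypothesis $\chi(G)>\chi(G/K_0)+\aleph_0$ is used, for with $\card{\alpha}<\chi(G)$ it forces $\chi(G/K_\alpha)<\chi(G)$ at every $\alpha<\chi(G)$, be it successor or limit.

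The hard part will be condition~(iii), $K_{\chi(G)}=\{e_G\}$. Since $\chi(G)$ is an infinite cardinal it is a limit ordinal, so (ii) gives $K_{\chi(G)}=\bigcap_{\alpha<\chi(G)}K_\alpha$; but Corollary~\ref{corollary:basicproperties:monotone} only tells us $\chi(G/K_{\chi(G)})=\chi(G)$, which is far from triviality. This is exactly what the neighbourhood base was built into the successor steps to repair: for each $\alpha<\chi(G)$ one has $K_{\chi(G)}\subseteq K_{\alpha+1}\subseteq W_\alpha$, so $K_{\chi(G)}\subseteq\bigcap_{\alpha<\chi(G)}W_\alpha=\{e_G\}$, and since every $K_\alpha$ contains $e_G$ this gives $K_{\chi(G)}=\{e_G\}$ and closes the construction.
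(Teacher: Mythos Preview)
Your proposal is correct and follows essentially the same approach as the paper: fix a family $(V_\alpha)_{\alpha<\chi(G)}$ of $e_G$-neighbourhoods with trivial intersection, run transfinite induction through Lemma~\ref{lemma:subsaturation} and Corollary~\ref{cor:onestep} at successor steps with the constraint $K_{\alpha+1}\subseteq V_\alpha$, and take intersections at limit stages. You have in fact spelled out more than the paper does, making explicit the invariant $K_\alpha\iin\ssubgr$ via Corollary~\ref{corollary:basicproperties:monotone} and the verification of~(iii), both of which the paper leaves to the reader (the formula $\chi(G/K_\alpha)=\card{\alpha}+\chi(G/K_0)+\aleph_0$ you derive is precisely the note following the lemma).
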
\noindent
Note that by Corollary\;\ref{corollary:basicproperties:monotone} it
follows that \,$\chi(G/K_\alpha) = \card{\alpha}+\chi(G/K_0)+\aleph_0$
\,for \,$0<\alpha\le\chi(G)$\,, in particular \,$K_\alpha \iin \ssubgr$
\,for \,$\alpha<\chi(G)$, and $K_\alpha$ is not open in $G$ \,for
$\alpha>0$.

\Proof
Let $(V_{\alpha})_{\alpha<\chi(G)}$ be a family of $e_G$-neighbourhoods
satisfying \,$\bigcap_{\alpha<\chi(G)} V_\alpha=\{e_G\}$.
Now use transfinite induction, applying Lemma~\ref{lemma:subsaturation} and
Corollary~\ref{cor:onestep} with
the requirement \,$K_{\alpha+1}\subseteq V_\alpha$\,.
\end{proof}\vspace{1mm}

\begin{theorem}
    \label{th:separationss}
Let $G$ be a locally compact group, \,$\aleph_0\le\tau\le\chi(G)$
and \,$\mu\iin\ssMeast{\tau}(G)$.
There exists a set $P\subseteq G$ such that \,$\card{P}=2^\tau$
and \,$\mu\conv p \perp \mu\conv p'$ for all $p,p' \iin P$,
$p\neq p'$. Thus $\mu$ is \,$2^\tau$-\,thin.\vspace{-1mm}
\end{theorem}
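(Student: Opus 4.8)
The plan is to reduce to the case where $G$ is $\sigma$-compact and $\tau=\chi(G)$, and then to build $2^\tau$ translates by a transfinite analogue of the tree construction in Theorem~\ref{th:perfectset}, using the subgroup tower supplied by Lemma~\ref{lemma:induction}. First I would dispose of the normalizations: since $\mu\perp\nu\iff\abs{\mu}\perp\abs{\nu}$ and $\ssMeast{\tau}(G)$ is an L-subspace (Theorem~\ref{th:measureclasses}), I may assume $\mu\ge0$ and $\mu(G)=1$. As $\mu$ is supported on an open $\sigma$-compact subgroup $H$, and $\ssMeast{\tau}(G)\cap\Meas(H)=\ssMeast{\tau}(H)$ with $\chi(H)=\chi(G)$ (Lemma~\ref{lemma:heredit} and its proof, together with Corollary~\ref{corollary:basicproperties:classes}), I may assume $G$ itself $\sigma$-compact. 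If $\tau<\chi(G)$, pick a compact $K$ with $\mu\iin\Meas(G/K)$ and $\chi(G/K)=\tau$; replacing $K$ by the compact normal subgroup $N=\bigcap_{x}xKx^{-1}$ (for which $\chi(G/N)=\tau$, by Lemma~\ref{lemma:basicproperties}\,(3) and $N\subseteq K$) and passing to $\bar G=G/N$, Corollary~\ref{corollary:qIdentification} identifies $\mu$ with $\bar\mu\iin\ssMeast{\chi(\bar G)}(\bar G)$, $\chi(\bar G)=\tau$. Thinness in $\bar G$ lifts back to $G$ because the identification of Lemma~\ref{Identify} is an isometric, order-preserving (hence singularity-preserving) isomorphism intertwining right translations. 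This leaves the case $\tau=\chi(G)$ with $G$ $\sigma$-compact; when $\tau=\aleph_0$ it is immediate from Theorem~\ref{th:perfectset} (then $\kappa(G)=\aleph_0$, so $2^{\aleph_0}\kappa(G)=2^\tau$), so I may assume $\tau>\aleph_0$.

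Next I would invoke Lemma~\ref{lemma:induction}, taking $K_0=K_G$ a compact normal subgroup with $G/K_0$ metrizable (Lemma~\ref{lemma:compactSubgroup}); then $\chi(G/K_0)\le\aleph_0<\chi(G)$, so the hypothesis $\chi(G)>\chi(G/K_0)+\aleph_0$ holds. This yields a decreasing tower of compact normal subgroups $(K_\alpha)_{\alpha\le\chi(G)}$ with $K_{\chi(G)}=\{e_G\}$ and elements $x_\alpha\iin K_\alpha$ satisfying the splitting property $\mu\conv K_{\alpha+1}\perp\mu\conv x_\alpha K_{\alpha+1}$.

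The combinatorial core is to assign to each $d\subseteq\chi(G)$ a point $p_d\iin G$ by transfinite recursion on $\beta\le\chi(G)$, producing representatives $p_\beta(s)$ (for $s\subseteq\beta$) of cosets $Q_\beta(s)=p_\beta(s)K_\beta$ that are \emph{coherent}, i.e.\ $Q_\beta(s)\subseteq Q_\gamma(s\cap\gamma)$ whenever $\gamma<\beta$. At a successor I set $p_{\beta+1}(s\cup\{\beta\})=x_\beta\,p_\beta(s)$ and $p_{\beta+1}(s)=p_\beta(s)$; normality of $K_\beta$ gives $x_\beta p_\beta(s)\iin p_\beta(s)K_\beta$, so coherence is preserved. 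At a limit $\beta$ the cosets $Q_\gamma(s\cap\gamma)$ $(\gamma<\beta)$ form a decreasing family of nonempty compacta inside the compact $K_0$, whose intersection is a single coset of $K_\beta=\bigcap_{\gamma<\beta}K_\gamma$; I choose $p_\beta(s)$ in it. Finally $Q_{\chi(G)}(d)$ is a singleton (as $K_{\chi(G)}=\{e_G\}$), defining $p_d$. Prepending the new factor on the left ensures that factors of low index end up on the right, i.e.\ are applied last.

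To check $\mu\conv p_d\perp\mu\conv p_{d'}$ for $d\neq d'$, let $\alpha$ be the least index of difference, say $\alpha\iin d\setminus d'$, and put $q=p_\alpha(d\cap\alpha)=p_\alpha(d'\cap\alpha)$. By coherence, $p_d\iin x_\alpha qK_{\alpha+1}$ and $p_{d'}\iin qK_{\alpha+1}$; using normality of $K_{\alpha+1}$ I rewrite $p_d=(x_\alpha k)q$ and $p_{d'}=k'q$ with $k,k'\iin K_{\alpha+1}$. Then $\mu\conv p_d=(\mu\conv x_\alpha k)\conv q$ and $\mu\conv p_{d'}=(\mu\conv k')\conv q$; the splitting property gives $\mu\conv k'\perp\mu\conv x_\alpha k$, and right translation by $q$ is a bijective isometry preserving mutual singularity, whence $\mu\conv p_{d'}\perp\mu\conv p_d$. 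In particular the $p_d$ are distinct, so $P=\{p_d:d\subseteq\chi(G)\}$ has cardinality $2^{\chi(G)}=2^\tau$ and witnesses $2^\tau$-thinness. The hard part is the combinatorial construction itself: making the transfinite product converge to a single well-defined point at limit stages (handled by the decreasing compact cosets) while arranging the multiplication order so that the common prefix $q$ can be cancelled and the decisive factor $x_\alpha$ lands exactly where the splitting property of Lemma~\ref{lemma:induction} applies. Reconciling these two competing demands is precisely what the normality of the $K_\alpha$ provides.
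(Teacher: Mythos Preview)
Your proposal is correct and follows essentially the same route as the paper: the same reductions (positivity, $\sigma$-compactness, passage to $G/N$ via Corollary~\ref{corollary:qIdentification}, the case $\tau=\aleph_0$ via Theorem~\ref{th:perfectset}), the same tower $(K_\alpha)$ from Lemma~\ref{lemma:induction} with $K_0=K_G$, and the same transfinite tree of left-prepended products $x_\alpha\,(\cdot)$ with limits taken inside the shrinking compact cosets. Your coherence property is exactly the paper's condition $(1^\bullet)$, and your singularity verification via the least index of difference and cancellation of the common right factor $q$ using normality is precisely the paper's proof of $(2^\bullet)$.
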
\noindent
If $G$ is non-metrizable, we can find such a $P$ satisfying $P\subseteq K_G$
 \,(defined by Lemma\,\ref{lemma:compactSubgroup}).

\Proof
We have $\mu\iin\ssMeas(G,K)$ for some compact subgroup $K$ with
$\chi(G/K)=\tau$.
The case $\tau=\aleph_0$ was settled in Theorem \ref{th:perfectset} \,(recall
that $\ssMeas(G,K)\subseteq\sMeas(G)$\,). Thus, we may assume that
$\tau>\aleph_0$ and, replacing $G$ by some open subgroup (using
Lemma\;\ref{lemma:heredit}), we may assume that
$G$ is $\sigma$-compact. Then by \thetag{3}
of Lemma~\ref{lemma:basicproperties}, $K$ can be replaced by a normal
subgroup. In this way (using Corollary \ref{corollary:qIdentification}),
the proof is reduced to the case where
\,$\mu\iin\ssMeast{\chi(G)}(G)$ and $G$ is $\sigma$-compact and non-metrizable
(i.e., $\tau=\chi(G)>\aleph_0$).
Without loss of generality, assume that \,$\mu\geq0$, $\mu(G)=1$.
Recall the notational conventions in the proof of Theorem~\ref{th:perfectset}.

Let $K_\alpha$ and \,$x_\alpha\iin K_\alpha$ for $\alpha<\chi(G)$ be
as in Lemma~\ref{lemma:induction}, with $K_0=K_G$\, (obtained from
Lemma\;\ref{lemma:compactSubgroup}).
Induction on $\beta \le\chi(G)$ will be used to construct elements
$d_*\iin K_0$ for all $d\subseteq \beta$
such that
\begin{itemize}
\item[$(1^{\bullet})$]
if  \,$\alpha<\beta\le\chi(G)$ and $d\subseteq\beta$
\;then \,$d_*\iin(d\cap\alpha)_*K_\alpha $~;
\item[$(2^{\bullet})$]
if $\beta\le\chi(G)$ and $d$ and $d'$ are distinct subsets of  $\beta$
then
\,$\mu\conv d'_*K_\beta \perp \mu\conv d_*K_\beta$~.
\end{itemize}
That will conclude the proof,  since in view of $(2^{\bullet})$
we can take \,$P=\{d_*\!: d\subseteq\chi(G) \}$.\vspace{1mm}

To carry out the induction, start with $\beta = 0$ and define
$\emptyset_* = e_G$\,.
Now assume that $0\neq\beta<\chi(G)$ and that $d_*$ have been defined for
all $d \subseteq\alpha$ and $\alpha<\beta$.
If  $d \subseteq\beta$ and $\beta$ is a limit ordinal define
$d_*$ to be any point in
\,$\bigcap_{\alpha<\beta }(d\cap \alpha)_*K_\alpha$ \ (this is a decreasing
family of sets by $(1^{\bullet})$ and since $(K_\alpha)$ is decreasing)\,.
If $\beta=\alpha+1$ then define\vspace{-2mm}
\[
d_* = \left\{ \begin{array}{ll}
                 \phantom{x}(d \cap \alpha)_* & \mathrm{when}\ \
		 	\alpha\notin d                 \\
                 x_\alpha (d \cap \alpha)_* & \mathrm{when}\ \
		 	\alpha\in d\ .
              \end{array}
      \right.
\]

Slightly informally, this can be described as follows. If $\beta=\gamma+n$\,,
where $n$ is finite, $d\subseteq\beta$ \,and
$\beta\setminus\gamma=\{\gamma_0,\dots,\gamma_l\}$, where
$\gamma\le \gamma_0<\dots<\gamma_l<\beta$\,, then
$d_*=x_{\gamma_l}\cdots x_{\gamma_0}(d\cap\gamma)_*$
\;(for technical reasons,
we revert here the order of the factors, compared to the proof of
Theorem \ref{th:perfectset}). For  limit ordinals $\beta$ there is some choice
in the definition of $d_*$\,, since the net of ``partial products"
$\bigl(\,(d\cap\alpha)_*\bigr)_{\alpha<\beta}$ will converge in general only
in the quotient $G/K_\beta$ (being the projective limit of
$(G/K_\alpha)_{\alpha<\beta}$\,; compare the proof of
Corollary \ref{corollary:basicproperties:monotone}).

Property $(1^{\bullet})$ is easy to prove from the
definition, using normality of $K_\alpha$ in $G$\,.
To prove $(2^{\bullet})$, if  $d,d'\subseteq\beta$ with $d\neq d'$, take
$\beta'\le\beta$ minimal such that \,$d\cap\beta'\neq d'\cap\beta'$.
In view of $(1^{\bullet})$, replacing $\beta,d,d'$ by
$\beta',d\cap\beta',d'\cap\beta'$,
it is enough to consider the case where $\beta=\alpha+1$
and $d, d'\subseteq\beta$ are such that
\,$d'\cap \alpha = d\cap \alpha$\,,
$d_\ast = (d\cap\alpha)_\ast$ and
$d'_\ast = x_\alpha (d\cap\alpha)_\ast = x_\alpha d_\ast$\,.
Since \,$\mu\conv K_{\alpha+1}\perp\mu\conv x_\alpha K_{\alpha+1}$, we have
also
\,$\mu\conv K_{\alpha+1} d_\ast \perp\mu\conv x_\alpha K_{\alpha+1} d_\ast$\,.
As $K_{\alpha+1}$ is a normal subgroup of $G$\,,\vspace{-1.5mm} 
\begin{alignat*}{2}
K_{\alpha+1} d_\ast\ &=\  &d_\ast  K_{\alpha+1}\ &=\ d_\ast K_\beta  \\
x_\alpha K_{\alpha+1} d_\ast\ &=\  x_\alpha &d_\ast K_{\alpha+1}
\ &=\ d'_\ast K_\beta\ . 
\end{alignat*}
Thus \,$ \mu\conv d_\ast K_\beta\,\perp\,\mu\conv d'_\ast K_\beta$~.
\end{proof}

\begin{corollary}
    \label{cor:thinss}
Let $G$ be a locally compact group, $K$ a non-open compact subgroup. Then
every \,$\mu\in\ssMeas(G,K)$ is $\card{G/K}$\,-\,thin.
\end{corollary}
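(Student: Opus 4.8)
The plan is to read the corollary off the two separation theorems, Theorem~\ref{th:perfectset} and Theorem~\ref{th:separationss}, combined with the cardinal identity \,$\card{G/K}=\kappa(G)\cdot 2^{\chi(G/K)}$ recorded in Section~\ref{sec:notation} for a compact subgroup of infinite index. Write $\tau=\chi(G/K)$. Since $K$ is non-open we have $\tau\ge\aleph_0$, the quotient $G/K$ is non-discrete (so $K$ has infinite index, the identity above applies, and $G$ itself is non-discrete), and $\mu\iin\ssMeas(G,K)\subseteq\ssMeast{\tau}(G)$ with $\tau\le\chi(G)$ by Lemma~\ref{lemma:basicproperties}\,\thetag{1}. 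Because $\card{G/K}=\max\bigl(\kappa(G),2^\tau\bigr)$, it suffices to exhibit a set $P\subseteq G$ of cardinality $\kappa(G)\cdot 2^\tau$ whose right translates of $\mu$ are pairwise mutually singular.

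First I would dispose of the metrizable case $\tau=\aleph_0$. Here $\ssMeas(G,K)\subseteq\sMeasG$, so Theorem~\ref{th:perfectset} applies directly and shows that $\mu$ is $2^{\aleph_0}\kappa(G)$-thin; as $2^{\aleph_0}\kappa(G)=\card{G/K}$ in this case, we are done. For $\tau>\aleph_0$ the group $G$ is non-metrizable, and Theorem~\ref{th:separationss} (with its accompanying remark) provides a set $P_0$ contained in the compact subgroup $K_G$ of Lemma~\ref{lemma:compactSubgroup}, with $\card{P_0}=2^\tau$ and $\mu\conv p\perp\mu\conv p'$ whenever $p,p'\iin P_0$, $p\neq p'$. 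This already settles the subcase $\kappa(G)\le 2^\tau$, where $\card{G/K}=2^\tau$.

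To reach the full cardinal when $\kappa(G)>2^\tau$ — in which subcase $G$ is necessarily non-$\sigma$-compact — I would graft on the coset separation used at the end of the proof of Theorem~\ref{th:perfectset}. Pick a $\sigma$-compact set $S$ on which $\mu$ is concentrated and an open $\sigma$-compact subgroup $H$ with $S\cup K_G\subseteq H$ (take the group generated by $S\cup K_G$ together with a compact symmetric $e_G$-neighbourhood). Let $P_1$ be a set of representatives of the right cosets $H\backslash G$, so that $\card{P_1}=\kappa(G)$, and put $P=P_0P_1=\{p_0p_1:p_0\iin P_0,\ p_1\iin P_1\}$. The map $(p_0,p_1)\mapsto p_0p_1$ is injective: $p_0p_1=p_0'p_1'$ forces $p_1(p_1')^{-1}=p_0^{-1}p_0'\iin H$, hence $Hp_1=Hp_1'$ and $p_1=p_1'$, whence $p_0=p_0'$; thus $\card P=2^\tau\cdot\kappa(G)=\card{G/K}$. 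For the separation, write $\mu\conv(p_0p_1)=\mu\conv p_0\conv p_1$; since $\mu\conv p_0$ is concentrated on $Sp_0\subseteq H$, the measure $\mu\conv p_0\conv p_1$ is concentrated on $Hp_1$. If $p_1\neq p_1'$ the cosets $Hp_1$ and $Hp_1'$ are disjoint, giving singularity; if $p_1=p_1'$ then $p_0\neq p_0'$, and right translating $\mu\conv p_0\perp\mu\conv p_0'$ by $p_1$ (which preserves mutual singularity) yields $\mu\conv p_0\conv p_1\perp\mu\conv p_0'\conv p_1$. Hence $\mu$ is $\card{G/K}$-thin.

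The only genuine content beyond cardinal bookkeeping is the interplay of two scales of separation: Theorem~\ref{th:separationss} separates the $2^\tau$ translates \emph{within} the compact subgroup $K_G$, while the coset representatives separate translates \emph{across} the $\kappa(G)$ pieces of the non-$\sigma$-compact group. The step needing the most care is keeping the order of convolution straight, so that each $\mu\conv p_0$ remains concentrated inside the single open subgroup $H$; this is exactly what makes the two mechanisms compatible, and everything else reduces to the identity $\card{G/K}=\kappa(G)\cdot 2^{\chi(G/K)}$.
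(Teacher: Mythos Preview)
Your proof is correct and follows essentially the same approach as the paper: combine the cardinal identity \,$\card{G/K}=\kappa(G)\cdot 2^{\chi(G/K)}$ with Theorems~\ref{th:perfectset} and~\ref{th:separationss}. The paper's argument is a bit more economical in your last subcase: since $\ssMeas(G,K)\subseteq\sMeasG$, Theorem~\ref{th:perfectset} already gives that $\mu$ is $2^{\aleph_0}\kappa(G)$-thin (hence $\kappa(G)$-thin) for \emph{every} $\tau$, so when $\kappa(G)>2^\tau$ one has $\card{G/K}=\kappa(G)$ and Theorem~\ref{th:perfectset} alone suffices --- there is no need to re-derive the coset separation and splice it together with $P_0$. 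Your explicit product construction $P=P_0P_1$ is correct but redundant, since $\kappa(G)\cdot 2^\tau=\max(\kappa(G),2^\tau)$ and being $\tau$-thin for the larger of two cardinals follows trivially from being thin for each separately.
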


\Proof
Recall that \,$\card{G/K}=\kappa(G)\cdot 2^{\chi(G/K)}$ holds when
$K$ has infinite index in $G$. By Theorem \ref{th:perfectset}, 
$\mu$ is $\kappa(G)$\,-\,thin and by Theorem \ref{th:separationss}
(using that \,$\ssMeas(G,K)\!\subseteq\ssMeast{\chi(G/K)}(G)$\,),
$\mu$ is $2^{\chi(G/K)}$\,-\,thin when $K$ is not open. This covers all
possible cases.
\end{proof}

\begin{Rems} We mention here some further results that can be shown by
similar methods (this is not needed for the proof of the Main Theorem).
\item{(a)} \,There are some converse statements to Corollary \ref{cor:thinss}.
For $\mu\iin\MeasG,\ \mu\neq0$,\linebreak  put
\,$K_\mu=\{x\iin G\!:\mu\conv x=\mu\,\}$ \,and
\,$M_\mu=\{\mu\conv x\!:x\iin G\,\}$.
Then $K_\mu$ is the maximal compact subgroup of $G$ such that
\,$\mu\iin\Meas(G/K_\mu)$.
Clearly, we have
\,$d(M_\mu)\le\linebreak \lvert M_\mu\rvert=\card{G/K_\mu}$\,.
In particular, $\mu$ cannot be $\tau$-thin for $\tau>\card{G/K_\mu}$.
\,Hence the value $\card{G/K}$ in Corollary \ref{cor:thinss} is the best
possible when $\mu\ne0$\,.

The equality $d(M_\mu)=\card{G/K_\mu}$ holds iff either $M_\mu$ is finite
(e.g. when $G$ is finite) or $\kappa(G)\ge2^{\chi(G/K_\mu)}$
or $\mu\notin\aiMeast{\tau}(G)$ where $\tau$ denotes the least cardinal
such that $2^\tau=2^{\chi(G/K_\mu)}$ \;(if the first two options do not hold
then $K_\mu$ must be non-open). In the remaining cases, $d(M_\mu)$ can be
expressed similarly, using the decomposition of $\sMeasG$ described at the
end of Section \ref{sec:compsub}. For $\mu\notin\aMeasG$ (which implies that
$K_\mu$ is not open),
$\mu$ is $d(M_\mu)$-thin iff either
\,$\kappa(G)\ge2^{\chi(G/K_\mu)}$ or \,$\mu\perp\aiMeast{\tau}(G)$
\,(with $\tau$~as above).
In all these cases $d(M_\mu)=\card{G/K_\mu}$ holds (but not conversely).

Assuming e.g. the generalized continuum hypothesis, it follows that if $K$ is
a non-open compact subgroup with $\kappa(G)<2^{\chi(G/K)}$
and \,$\mu\iin\Meas(G/K)$
is $\card{G/K}$\,-\,thin, then $\mu$ must be strongly singular.
\;On the other hand, under the assumption $2^{\aleph_1} = 2^{\aleph_0}$
(which is also consistent with ZFC), one gets that for a group $G$ with
$\chi(G)\le\aleph_1$ all $\mu\iin\sMeas(G)$ are $\card{G}$\,-\,thin.
\vspace{0mm plus .5mm}
\item{(b)} Another generalization is to consider a locally compact space
$\Omega$ with a (jointly continuous) left action of a locally compact group
$G$ (see \cite{Jew} and \cite{Larsen}). This induces a left action of
$\MeasG$ on $\Meas(\Omega)$ which will be written again as $\mu\conv\nu$ for
\linebreak $\mu\iin\MeasG,\;\nu\iin\Meas(\Omega)$. There need not exist a
non-zero $G$-invariant measure on~$\Omega$ (and if one exists, it need not be
unique), but one can define e.g.
$\aMeas(\Omega)=\linebreak\aMeasG\conv\Meas(\Omega)$. If one uses
now {\it left} translations by elements of $G$, most of the constructions in
Sections \ref{sec:singularSeparation} and \ref{sec:strsingularSeparation}
work again. For example, every $\mu\iin\sMeas(\Omega)\ (=\aMeas(\Omega)^\perp)$
is $2^{\aleph_0}$\,-\,thin (with respect to left translations from $G$).
Similarly, one can define strongly singular measures on $\Omega$ and there
is an analogue of Theorem \ref{th:separationss}. In these cases one uses
translations by elements from some compact subset of $G$. The statements
based on non-compactness (i.e., using $\kappa(G)$\,) do not always extend
to this setting, depending on further properties of the action.
\end{Rems}\vspace{.5mm plus .5mm}

\section{Proof of the Main Theorem --- general case}
    \label{sec:caseII}

First we show some further properties of the spaces $\ssMeas(G,K)$ and
$\aiMeas(G,K)$. Then (Theorem \ref{th:caseII}) we arrive at the
inductive argument to prove the Main Theorem.

\begin{lemma}
    \label{lemma:singular}
Let $G$ be a locally compact group,
$K$ a compact subgroup of $G$,\linebreak $\mu\iin\MeasG$. We have
\,$\mu\perp\Meas(G/K)$ \;iff \ $\mu\perp\abs{\mu}\conv\lambda_K$\,.\\
If \;$\nu\iin\Meas(G/K)$, \,$\nu\perp\abs{\mu}\conv\lambda_K$\,, then
\,$\nu\perp\mu$\,.
\end{lemma}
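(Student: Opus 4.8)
The plan is to reduce everything to positive measures and then to exploit that right convolution by $\lambda_K$ is exactly averaging over the fibres of $\pi\colon G\to G/K$. First I would record that $\abs{\mu}\conv\lambda_K$ is positive and, since $\lambda_K\conv\lambda_K=\lambda_K$, belongs to $\Meas(G/K)=\MeasG\conv\lambda_K$. This already gives the trivial implication of the equivalence: if $\mu\perp\Meas(G/K)$, then $\mu$ is in particular singular to this one member $\abs{\mu}\conv\lambda_K$. For everything else, note that $\mu\perp\nu$ means $\abs{\mu}\perp\abs{\nu}$, and that $\Meas(G/K)$ is a vector sublattice of $\MeasG$ by Theorem~\ref{th:measureclasses}(iii); hence I may replace $\mu$ by $\rho:=\abs{\mu}$ and any $\nu\iin\Meas(G/K)$ by $\abs{\nu}\iin\Meas(G/K)$, working throughout with positive measures. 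Put $\sigma:=\rho\conv\lambda_K$, a positive, right $K$-invariant measure.

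For the nontrivial implication $\mu\perp\sigma\Rightarrow\mu\perp\Meas(G/K)$, the essential obstacle is that $\rho$ and its average $\sigma$ have the \emph{same} image on $G/K$ (indeed $(\rho\conv\lambda_K)^{\,\cdot}=\dot\rho$ in the notation of Lemma~\ref{Identify}), so no image-measure argument can separate them; the separating set must be produced fibrewise. From $\rho\perp\sigma$ I choose Borel $E$ with $\rho(G\setminus E)=0$ and $\sigma(E)=0$. Unfolding $\sigma(E)=(\rho\conv\lambda_K)(E)$ by Fubini --- legitimate after restricting to an open $\sigma$-compact subgroup containing $K$ and the support of $\rho$, exactly as in Lemma~\ref{lemma:smalltranslation} --- gives $\int_K\rho(Ek^{-1})\,d\lambda_K(k)=0$, so $\rho(Ek^{-1})=0$ for $\lambda_K$-almost every $k$. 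I then introduce the right $K$-invariant averaged indicator $g(y)=\int_K\mathbf 1_E(yk)\,d\lambda_K(k)$; a second application of Fubini shows $\int_G g\,d\rho=\int_K\rho(Ek^{-1})\,d\lambda_K(k)=0$, whence $g=0$ $\rho$-almost everywhere. Thus $\rho$ concentrates on $E':=E\cap\{g=0\}$, and since $g$ is right $K$-invariant, $E'$ meets every coset $yK$ in a $\lambda_K$-null set.

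The computation that closes both parts is the identity $\nu(B)=\int_G\bigl(\int_K\mathbf 1_B(yk)\,d\lambda_K(k)\bigr)\,d\nu(y)$, valid for any $\nu\iin\Meas(G,K)$ by Fubini together with $\nu=\nu\conv\lambda_K$. Applied to $B=E'$ it forces $\nu(E')=0$ for every positive $\nu\iin\Meas(G/K)$, and hence $\mu\perp\nu$; this completes the equivalence. The second assertion I would settle by the same device, with the roles of the two measures interchanged: given $\nu\iin\Meas(G/K)$ with $\nu\perp\abs{\mu}\conv\lambda_K$, pick $F$ carrying $\nu$ with $\sigma(F)=0$, so again $\rho(Fk^{-1})=0$ for $\lambda_K$-a.e.\ $k$. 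Here I exploit the right $K$-invariance of $\nu$: the averaged indicator $h(y)=\int_K\mathbf 1_F(yk)\,d\lambda_K(k)$ satisfies $\int_G h\,d\nu=\nu(G)$, so $h=1$ $\nu$-a.e.\ and $\nu$ concentrates on the saturated set $F^{*}=\{h=1\}$. A final Fubini step, $\rho(F^{*})=\int_{F^{*}}h\,d\rho=\int_K\rho(F^{*}\cap Fk^{-1})\,d\lambda_K(k)=0$, shows $\rho(F^{*})=0$, i.e.\ $\rho\perp\nu$, as desired.

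The step I expect to be the crux is the passage $\mu\perp\sigma\Rightarrow\mu\perp\Meas(G/K)$: one must notice that the quotient map is useless (the two measures coincide downstairs) and instead manufacture, from mere singularity against the single average $\sigma$, a carrier of $\mu$ that is fibrewise negligible, so that it is automatically null for \emph{every} right $K$-invariant measure. The two Fubini interchanges are routine once the measures are pushed onto a $\sigma$-compact subgroup, and the reduction to positive measures rests entirely on the sublattice property from Theorem~\ref{th:measureclasses}.
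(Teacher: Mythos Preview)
Your argument is correct, but the paper takes a different and shorter route. The paper proves the \emph{second} assertion first, and here your remark that ``no image-measure argument can separate them'' is a red herring: for the second assertion both $\nu$ and $\abs{\mu}\conv\lambda_K$ lie in $\Meas(G,K)$, so by the isometric isomorphism of Lemma~\ref{Identify} their singularity passes to the image measures $\dot\nu$ and $(\abs{\mu}\conv\lambda_K)^{\textstyle\cdot}$ on $G/K$. The key observation is simply that $(\abs{\mu}\conv\lambda_K)^{\textstyle\cdot}=\dot{\abs{\mu}}$, so a separating Borel set $E_0\subseteq G/K$ pulls back to the $K$-saturated set $E=\pi^{-1}(E_0)$ with $\nu(E)=\nu(G)$ and $\abs{\mu}(E)=0$; that is all. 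The nontrivial direction of the equivalence then falls out by Lebesgue-decomposing an arbitrary $\nu\iin\Meas(G,K)$ with respect to $\abs{\mu}\conv\lambda_K$: the singular part is handled by the second assertion, and the absolutely continuous part by $\mu\perp\abs{\mu}\conv\lambda_K$.

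Your approach, by contrast, bypasses Lemma~\ref{Identify} and the Lebesgue decomposition entirely, manufacturing fibrewise-null carriers via two Fubini interchanges. This is more self-contained and shows explicitly what the separating sets look like on each coset; the paper's argument is more conceptual, extracting the result almost for free from the identification $\Meas(G,K)\cong\Meas(G/K)$ already in place. Incidentally, the sublattice property of $\Meas(G/K)$ that you cite from Theorem~\ref{th:measureclasses}(iii) is elementary here: $\mu=\mu\conv\lambda_K$ forces $\mu\conv\delta_k=\mu$ for all $k\iin K$, whence $\abs{\mu}\conv\delta_k=\abs{\mu}$ and $\abs{\mu}\conv\lambda_K=\abs{\mu}$.
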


\Proof
It is enough to consider $\mu\ge0$ and $\nu\ge0$.
For clarity, we take up the more precise notation of
Lemma\;\ref{Identify}.
For the second statement, assume that $\nu\iin\Meas(G,K)$ and
$\nu\perp\mu\conv\lambda_K$\,.
Then for the image measures,
$\dot\nu\perp(\mu\star\lambda_K)^{\textstyle\cdot}$
holds as well (by Lemma\;\ref{Identify}). Hence there is a Borel set
$E_0$ in $G/K$ such that \,$\dot\nu(E_0)=\dot\nu(G/K)$ and
$(\mu\star\lambda_K)^{\textstyle\cdot}(E_0)=0$. Recall that
$(\mu\star\lambda_K)^{\textstyle\cdot}=\dot\mu$\,.
Put $E=\pi^{-1}(E_0)$. Then $E$ is a Borel set in $G$ and, by the definition
of image measures, we get $\nu(E)=\nu(G)$ and $\mu(E)=0$, so that
\,$\nu\perp\mu$\,.

In the first part, one direction is clear. For the other one, assume that
$\mu\perp\mu\conv\lambda_K$ and take any $\nu\iin\Meas(G,K)$ ($\nu\ge0$).
The Lebesgue decomposition of $\nu$ with respect to
$\mu\conv\lambda_K\iin\Meas(G,K)$
yields \,$\nu=\nu_0+\nu_1$ where $\nu_0,\nu_1\iin\Meas(G,K)$,
$\nu_0,\nu_1\geq 0$, \,
$\nu_0\perp\mu\conv\lambda_K$ and $\nu_1\ll\mu\conv\lambda_K$\,.
The second statement of the lemma implies \,$\nu_0\perp\mu$ and from
$\nu_1\ll\mu\conv\lambda_K$ and \,$\mu\perp\mu\conv\lambda_K$ we get
\,$\nu_1\perp\mu$\,. Hence \,$\nu\perp\mu$\,.
\end{proof}\vspace{1mm}

If $G$ is a locally compact group,
$K$ a compact subgroup of $G$ and $G/K$ non-metrizable, put
\[\scrK_K= \scrK_K (G)=
\{L\supseteq K\!: \,L\text{ compact subgroup of }G,\ \chi(G/L)<\chi(G/K)\,\}
\;.\]
By \thetag{2} in Lemma\;\ref{lemma:basicproperties}
the family $\scrK_K$ is downwards directed by inclusion. For\linebreak
$L\iin\scrK_K$\,, recall that (using the identifications of
Lemma\;\ref{Identify}) \,$\Meas(G/L)$
is a subalgebra of $\Meas(G/K)$\,.  The next Lemma will provide a more
intrinsic description of the subspaces $\aiMeas(G,K)$ and $\ssMeas(G,K)$ of
$\Meas(G/K)$.

\begin{lemma}
    \label{lemma:approx}
Let $G$ be a locally compact group, $K$ a compact subgroup of $G$ such that
$G/K$ is non-metrizable, and $\nu\iin\MeasG$. We have $\nu\iin\aiMeas(G,K)$
iff \,$\nu=\lim_{L\in\scrK_K} \nu\conv\lambda_L $ (norm limit). This is
also equivalent to \,$\inf_{L\in\scrK_K}\norm{\nu-\nu\conv\lambda_L}=0$\,.
\\
For $\nu\iin\Meas(G/K)$ we have $\nu\iin\ssMeas(G,K)$ iff
\,$\nu\perp\abs{\nu}\conv\lambda_L$ for all $L\iin\scrK_K$\,.
\end{lemma}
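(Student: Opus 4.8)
Throughout set $\tau=\chi(G/K)$, so $\tau>\aleph_0$ since $G/K$ is non-metrizable, and for a compact subgroup $L$ write $P_L\eta=\eta\conv\lambda_L$. Each $P_L$ is an idempotent contraction of $\MeasG$ with range $\Meas(G/L)$, and $P_LP_{L'}=P_{L'}P_L=P_{L'}$ whenever $L\subseteq L'$ (because $\lambda_L\conv\lambda_{L'}=\lambda_{L'}\conv\lambda_L=\lambda_{L'}$). The first thing I would record is an elementary monotonicity estimate: for compact subgroups $L\subseteq L'$ and every $\eta\in\MeasG$, $\norm{\eta-P_L\eta}\le 2\,\norm{\eta-P_{L'}\eta}$. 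This comes out of duality: writing $A_Lf=\lambda_L\odot f$, so that $\langle f,P_L\eta\rangle=\langle A_Lf,\eta\rangle$, the $A_L$ are averaging projections on $C_0(G)$ with $A_{L'}A_L=A_{L'}$ for $L\subseteq L'$, whence $(I-A_{L'})(I-A_L)=I-A_L$ and $\langle f,\eta-P_L\eta\rangle=\langle (I-A_L)f,\eta-P_{L'}\eta\rangle$ with $\norm{(I-A_L)f}_\infty\le 2\norm{f}_\infty$. It follows that for any downward directed family $\mathcal F$ of compact subgroups the net $(P_L\eta)_{L\in\mathcal F}$ converges to $\eta$ in norm iff $\inf_{L\in\mathcal F}\norm{\eta-P_L\eta}=0$ iff $\eta\in\cl{\bigcup_{L\in\mathcal F}\Meas(G/L)}$. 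Taking $\mathcal F=\scrK_\tau^\circ$ gives $\aiMeast{\tau}(G)=\cl{\bigcup_{M\in\scrK_\tau^\circ}\Meas(G/M)}$ (the side condition $\mu\in\Meas_\tau(G)$ being automatic by closedness, Theorem~\ref{th:measureclasses}(i)), and taking $\mathcal F=\scrK_K$ shows that the two conditions in the first assertion are equivalent and both say $\nu\in\cl{\bigcup_{L\in\scrK_K}\Meas(G/L)}$. Thus, for $\nu\in\Meas(G/K)$, the first assertion reduces to the equivalence $\nu\in\cl{\bigcup_{M\in\scrK_\tau^\circ}\Meas(G/M)}\iff\nu\in\cl{\bigcup_{L\in\scrK_K}\Meas(G/L)}$, in which only $\Rightarrow$ is nontrivial.

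The decisive step is a transfer from an arbitrary $M\in\scrK_\tau^\circ$ to a member of $\scrK_K$. Fix an open $\sigma$-compact subgroup $H\supseteq K$; given $M$, Corollary~\ref{corollary:basicproperties:classes} gives $M\cap H\in\scrK_\tau^\circ$ and then a subgroup $N\in\scrK_\tau^\circ$ that is normal in $H$ with $N\subseteq M\cap H\subseteq M$. Then $L:=NK=KN$ is a compact subgroup of $H$ containing $K$, and $\chi(G/L)\le\chi(G/N)<\tau$, so $L\in\scrK_K$; moreover $\lambda_L=\lambda_N\conv\lambda_K=\lambda_K\conv\lambda_N$. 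Since $\nu\in\Meas(G/K)$ forces $\abs{\nu}\in\Meas(G/K)$ (Theorem~\ref{th:measureclasses}(iii)), i.e. $\abs{\nu}\conv\lambda_K=\abs{\nu}$, I obtain the key identity $\abs{\nu}\conv\lambda_L=\abs{\nu}\conv\lambda_K\conv\lambda_N=\abs{\nu}\conv\lambda_N$; likewise, for any $\rho\in\Meas(G/N)$ one has $\rho\conv\lambda_K\in\Meas(G/L)$, since $(\rho\conv\lambda_K)\conv\lambda_L=\rho\conv\lambda_L=\rho\conv\lambda_N\conv\lambda_K=\rho\conv\lambda_K$.

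With this in hand both assertions follow. For the second, the definitions of $\ssMeast{\tau}(G)$ and $\Meas_\tau(G)$ — together with the redundancy of $\nu\in\sMeasG$ when $\tau>\aleph_0$ noted after Theorem~\ref{th:measureclasses}, and Lemma~\ref{lemma:singular} in the form $\nu\perp\Meas(G/L)\iff\nu\perp\abs{\nu}\conv\lambda_L$ — reduce membership $\nu\in\ssMeas(G,K)$ to $\nu\perp\Meas(G/M)$ for all $M\in\scrK_\tau^\circ$. Since $\scrK_K\subseteq\scrK_\tau^\circ$ one direction is immediate; conversely, assuming $\nu\perp\abs{\nu}\conv\lambda_L$ for all $L\in\scrK_K$ and given $M$, the transfer step yields $N\subseteq M$ with $\abs{\nu}\conv\lambda_L=\abs{\nu}\conv\lambda_N$, hence $\nu\perp\abs{\nu}\conv\lambda_N$, so $\nu\perp\Meas(G/N)$, and therefore $\nu\perp\Meas(G/M)$ because $N\subseteq M$ gives $\Meas(G/M)\subseteq\Meas(G/N)$. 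For the first assertion the only nontrivial implication is that $\nu\in\aiMeas(G,K)$ puts $\nu$ in $\cl{\bigcup_{L\in\scrK_K}\Meas(G/L)}$: given $\varepsilon$, choose $M\in\scrK_\tau^\circ$ with $\norm{\nu-P_M\nu}<\varepsilon$, pass to a normal $N\subseteq M$ as above (the monotonicity estimate keeps $\norm{\nu-P_N\nu}<2\varepsilon$), and put $\rho=P_N\nu$ and $L=NK$; then $P_K\rho\in\Meas(G/L)$ and $\norm{\nu-P_K\rho}=\norm{P_K(\nu-\rho)}\le\norm{\nu-\rho}<2\varepsilon$, using $\nu=P_K\nu$. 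I expect this transfer step to be the main obstacle: the delicate points are securing the normality that makes $NK$ a compact group with $\lambda_{NK}=\lambda_N\conv\lambda_K=\lambda_K\conv\lambda_N$, and confirming $\chi(G/L)<\tau$, so that passing from $M$ to $L\supseteq K$ costs nothing.
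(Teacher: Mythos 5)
Your proof is correct, and for the first assertion it follows the paper's own route almost exactly: the same $2$-factor monotonicity estimate $\norm{\eta-\eta\conv\lambda_L}\le 2\,\norm{\eta-\eta\conv\lambda_{L'}}$ for $L\subseteq L'$ (the paper derives it directly from $\lambda_{L'}\conv\lambda_L=\lambda_{L'}$ and the triangle inequality, you via the dual averaging operators $f\mapsto\lambda_L\odot f$ --- same content), downward directedness, and the same transfer step: given $M\iin\scrK_{\chi(G/K)}^\circ$, Corollary~\ref{corollary:basicproperties:classes} yields $N\subseteq M$ normal in an open $\sigma$-compact subgroup $H\supseteq K$, so that $KN\iin\scrK_K$ and $\nu\conv\lambda_{KN}=\nu\conv\lambda_N$ for $\nu\iin\Meas(G/K)$ (that the paper picks $H$ containing both $K$ and $M$ while you fix $H\supseteq K$ and pass to $M\cap H$ is immaterial). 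Where you genuinely diverge is the second assertion. The paper deduces it from the first part: writing $\nu=\nu_1+\nu_2$ with $\nu_1\iin\ssMeas(G,K)$, $\nu_2\iin\aiMeas(G,K)$ by Theorem~\ref{th:measureclasses}\,(iii), it notes that a nonzero $\nu_2$ satisfies $\nu_2\not\perp\nu_2\conv\lambda_L$ for some $L\iin\scrK_K$ (else the norm convergence just established would be impossible), and $0\le\nu_2\le\abs{\nu}$ then contradicts $\nu\perp\abs{\nu}\conv\lambda_L$. You instead unwind the definition of $\ssMeas(G,K)$ to the condition ``$\nu\perp\Meas(G/M)$ for all $M\iin\scrK_{\chi(G/K)}^\circ$'' (using the redundancy of the $\sMeasG$-condition for $\chi(G/K)>\aleph_0$ and $\Meas_{\tau_1}(G)=\bigcup_{M\in\scrK_{\tau_1}}\Meas(G/M)$), and then close the loop with Lemma~\ref{lemma:singular} together with the same transfer identity $\abs{\nu}\conv\lambda_{KN}=\abs{\nu}\conv\lambda_N$. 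Both arguments are sound; yours avoids invoking the direct-sum decomposition of Theorem~\ref{th:measureclasses}\,(iii) (needing only the lattice property of $\Meas(G/K)$ and $\aMeasG\subseteq\Meas_{\aleph_0}(G)$) and shows that both halves of the lemma rest on one and the same transfer mechanism, whereas the paper's version is shorter by recycling the first part it has just proved.
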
\vspace{-1mm}\noindent
In particular, $\aiMeas(G,K)$ coincides with the norm-closure of
\;$\bigcup_{L\iin\scrK_K}\Meas(G/L)$\,. Moreover, it follows easily that
$\delta_x\conv\lambda_K$ (or more generally, every $\mu\iin\Meas(G/K)$ with
$\mu\ll\delta_x\conv\lambda_{K'}$ for some compact subgroup $K'$ of $G$
with $\chi(G/K')=\chi(G/K)$\,) belongs to $\ssMeas(G,K)$ for all $x\iin G$\,.
\\
An important step in the proof of the Main Theorem will be to show that
the limit condition for $\aiMeas$ can be weakened
(Proposition\;\ref{ssnolimit}): \ if
the net \,$(\nu\conv\lambda_L)_{L\in\scrK_K}$ converges in the
{\it \wstar topology} of $\MeasG^{\ast\ast}$ (to any limit),
it already follows that $\nu\in\aiMeas(G,K)$.

\Proof
Put $\tau=\chi(G/K)$. If $L,L'$ are compact subgroups of $G$ with
$L\subseteq L'$, then $\lambda_{L'}\conv\lambda_L=\lambda_{L'}$\,. Thus
\,$\norm{\nu\conv\lambda_{L'}-\nu\conv\lambda_L}\le
\norm{\nu\conv\lambda_{L'}-\nu}$
and it follows that\linebreak
$\norm{\nu\conv\lambda_L-\nu}\le2\,\norm{\nu\conv\lambda_{L'}-\nu}$\,.
This shows that \,$\nu=\lim_{L\in\scrK_K} \nu\conv\lambda_L $ is
equivalent to $\inf_{L\in\scrK_K}\norm{\nu-\nu\conv\lambda_L}=0$ and
this equivalence persists when $\scrK_K$ is replaced by any family
$\scrD$ of compact subgroups which is downwards directed.

Since \,$\scrK_K\subseteq\scrK_\tau^\circ$\,, it follows that
\,$\nu=\lim_{L\in\scrK_K} \nu\conv\lambda_L$ implies
\,$\nu=\lim_{L\in\scrK_\tau^\circ} \nu\conv\lambda_L$\,, i.e.
$\nu\iin\aiMeast{\tau}(G)$\,. Furthermore,
\,$\nu\conv\lambda_L\iin\Meas(G/K)$ for $L\iin\scrK_K$ implies
\,$\nu\iin\Meas(G/K)$\,.

For the converse, if $L\iin\scrK_\tau^\circ$\,, let $H$ be an open
$\sigma$-compact subgroup of $G$ containing $K$ and $L$\,. By
Corollary~\ref{corollary:basicproperties:classes} there exists
$L'\iin\scrK_\tau^\circ$
such that $L'$ is normal in $H$ and $L'\subseteq L$\,. Then $KL'$ is a group,
by \thetag{1} of Lemma~\ref{lemma:basicproperties} we have $KL'\in\scrK_K$
and for \,$\nu\iin\Meas(G/K)$ we get (using
$\lambda_{KL'}=\lambda_K\conv\lambda_{L'}$)
that $\nu\conv\lambda_{KL'}=\nu\conv\lambda_{L'}$\,. Thus
\,$\nu=\lim_{L\in\scrK_\tau^\circ} \nu\conv\lambda_L $ \,implies
\,$\nu=\lim_{L\in\scrK_K} \nu\conv\lambda_L $\,.

For the second part, we may assume that $\nu\ge0$. By
Theorem \ref{th:measureclasses}, we have $\nu=\nu_1+\nu_2$\,, where
$\nu_1\iin\ssMeas(G,K),\,\nu_2\iin\aiMeas(G,K)$\,. If $\nu_2\neq0$\, we
have by the first part $\nu_2\not\perp\nu_2\conv\lambda_L$ for some
$L\iin\scrK_K$\,. Since \,$0\le\nu_2\le\nu$\,, this implies
\,$\nu\not\perp\nu\conv\lambda_L$\,. The other direction follows
immediately from the definition of $\ssMeas(G,K)$.
\end{proof}

\begin{corollary}
    \label{corollary:inbetween}
Let $G$ be a locally compact group, $K$ a compact subgroup of $G$ such that
$G/K$ is non-metrizable, and \,$\nu\iin\MeasG$.
Let \,$\scrD\subseteq \scrK_{\chi(G/K)}^\circ$ be a family of compact
subgroups of $G$ that is cofinal with $\scrK_K$ \,(i.e., for $L\iin\scrK_K$
there exists $L'\iin\scrD$ with $L'\subseteq L$).
We have \,$\nu\iin\aiMeas(G,K)$ iff 
\ $\inf_{L\in\scrD}\norm{\nu-\nu\conv\lambda_L}=0$\,.
For $\nu\iin\Meas(G/K)$ we have \,$\nu\iin\ssMeas(G,K)$ iff
\;$\nu\perp\abs{\nu}\conv\lambda_L$ for all $L\iin\scrD$\,.
\\
Assuming in addition that $\scrD$ is downwards directed, it follows that
\,$\nu\!\iin\!\aiMeas(G,K)$ \,iff
\,$\nu=\lim_{L\in\scrD} \nu\conv\lambda_L $ (norm limit).
\end{corollary}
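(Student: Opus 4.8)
The plan is to derive every assertion from Lemma~\ref{lemma:approx}, which gives the identical characterizations with $\scrK_K$ in place of $\scrD$, by exploiting that $\scrD$ is cofinal with $\scrK_K$ together with two monotonicity facts that I would record at the outset. The first is the norm estimate already isolated inside the proof of Lemma~\ref{lemma:approx}: since $\lambda_{L'}\conv\lambda_L=\lambda_{L'}$ for $L\subseteq L'$, one has $\norm{\nu-\nu\conv\lambda_L}\le 2\,\norm{\nu-\nu\conv\lambda_{L'}}$. The second is the subspace inclusion $\Meas(G/L)\subseteq\Meas(G/L')$ whenever $L'\subseteq L$ (immediate, as $\mu\conv\lambda_L=\mu$ forces $\mu\conv\lambda_{L'}=\mu$). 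Throughout write $\tau=\chi(G/K)$, and note $\tau>\aleph_0$ because $G/K$ is non-metrizable.

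I would treat the singular statement first, as it is the most robust. Using Lemma~\ref{lemma:singular} I rewrite ``$\nu\perp\abs{\nu}\conv\lambda_L$'' as ``$\nu\perp\Meas(G/L)$''. For the direction ``$\Leftarrow$'', given $L\in\scrK_K$ I use cofinality to choose $L'\in\scrD$ with $L'\subseteq L$; then $\Meas(G/L)\subseteq\Meas(G/L')$, so the hypothesis $\nu\perp\Meas(G/L')$ yields $\nu\perp\Meas(G/L)$, and Lemma~\ref{lemma:approx} gives $\nu\in\ssMeas(G,K)$. For ``$\Rightarrow$'' I use that $\nu\in\ssMeas(G,K)\subseteq\ssMeast{\tau}(G)$ is orthogonal to $\Meas_{\tau_1}(G)$ for every $\tau_1<\tau$; since each $L'\in\scrD\subseteq\scrK_\tau^\circ$ has $\chi(G/L')<\tau$, we have $\Meas(G/L')\subseteq\Meas_{\tau_1}(G)$ for some $\tau_1<\tau$, whence $\nu\perp\Meas(G/L')$.

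For the approximate-invariance statement I compare the two infima. The implication $\nu\in\aiMeas(G,K)\Rightarrow\inf_{L\in\scrD}\norm{\nu-\nu\conv\lambda_L}=0$ follows from Lemma~\ref{lemma:approx} (which provides $\inf_{L\in\scrK_K}=0$), cofinality, and the first monotonicity estimate. The converse is the delicate one: from $\inf_{L\in\scrD}\norm{\nu-\nu\conv\lambda_L}=0$ I first obtain, by norm-closedness of $\Meas_\tau(G)$ (Theorem~\ref{th:measureclasses}) and $\scrD\subseteq\scrK_\tau^\circ$, that $\nu\in\Meas_\tau(G)$; to land in $\aiMeas(G,K)$ rather than merely in the larger space $\aiMeast{\tau}(G)$, I would recover the $\scrK_K$-infimum via the normalization device from the proof of Lemma~\ref{lemma:approx}, passing from $L'\in\scrD$ to a subgroup $L''\subseteq L'$ normal in a suitable open $\sigma$-compact $H\supseteq K$, so that $KL''\in\scrK_K$ and $\nu\conv\lambda_{KL''}=\nu\conv\lambda_{L''}$, thereby converting a small $\scrD$-defect into a small $\scrK_K$-defect. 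Finally, when $\scrD$ is downwards directed, $\inf_{L\in\scrD}\norm{\nu-\nu\conv\lambda_L}=0$ is equivalent to norm convergence of the net $(\nu\conv\lambda_L)_{L\in\scrD}$ to $\nu$, by the very monotonicity argument recorded in the proof of Lemma~\ref{lemma:approx}, which gives the last assertion.

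The step I expect to be the main obstacle is exactly this converse for $\aiMeas$. The defining net of $\aiMeast{\tau}(G)$ ranges over all of $\scrK_\tau^\circ$, whose members need not contain $K$, so the weaker ``$\inf$ over $\scrD$'' condition alone only places $\nu$ in $\aiMeast{\tau}(G)$; it is the cofinality with $\scrK_K$ specifically (all of whose members contain $K$), funnelled through the normalization step above, that must be used to force right $K$-invariance, i.e.\ $\nu\in\Meas(G/K)$. Securing this $K$-invariance cleanly is the crux, after which the remaining bookkeeping is routine given Lemmas~\ref{lemma:singular} and~\ref{lemma:approx}.
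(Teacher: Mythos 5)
Your handling of the strongly singular part is correct and is essentially the paper's argument: the passage from $\scrD$ to $\scrK_K$ is exactly the observation that $\nu\perp\abs{\nu}\conv\lambda_{L'}$ propagates to every $L\supseteq L'$ via Lemma~\ref{lemma:singular} and the inclusion $\Meas(G/L)\subseteq\Meas(G/L')$, and your converse via the definition of $\ssMeast{\tau}(G)$ is fine; the same goes for the easy direction of the $\aiMeas$ statement. The genuine gap is at the step you yourself single out as the crux, and the mechanism you propose for it is circular: the identity $\nu\conv\lambda_{KL''}=\nu\conv\lambda_{L''}$ from the proof of Lemma~\ref{lemma:approx} rests on $\lambda_{KL''}=\lambda_K\conv\lambda_{L''}$ \emph{together with} $\nu=\nu\conv\lambda_K$, i.e.\ it already presupposes the right $K$-invariance of $\nu$ that you are trying to force. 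In Lemma~\ref{lemma:approx} this is legitimate because there the converse direction starts from $\nu\iin\aiMeas(G,K)\subseteq\Meas(G/K)$; here it is not available, and you never actually derive it.

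Moreover, under your reading of the hypothesis --- members of $\scrD$ need not contain $K$, only the one-sided condition that every $L\iin\scrK_K$ contains some $L'\iin\scrD$ --- the implication you are after is in fact false, so no argument can close the gap. Take $G=\Tgroup^{\omega_1}$, let $K$ be the first coordinate circle (so $\chi(G/K)=\aleph_1$), and let $\scrD=\{M_B\}$ where $M_B=\{x\iin G:\,x_i=e \text{ for } i\iin B\}$ and $B$ runs over the countable subsets of $\omega_1$ containing $0$. Then $\chi(G/M_B)\le\aleph_0$, so $\scrD\subseteq\scrK_{\chi(G/K)}^\circ$, and every $L\iin\scrK_K$ contains some $M_B$, so the stated cofinality condition holds; yet $\nu=\lambda_{M_{\{0\}}}$ satisfies $\nu\conv\lambda_{M_B}=\nu$ for all $M_B\iin\scrD$ (the infimum is $0$), while $\nu\conv\lambda_K=\lambda_G\neq\nu$, so $\nu\notin\Meas(G/K)$ and a fortiori $\nu\notin\aiMeas(G,K)$. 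The corollary is meant (and is only ever applied, see Lemma~\ref{lemma:cofinalsets}) with $\scrD$ and $\scrK_K$ mutually cofinal; then each $L'\iin\scrD$ contains a member of $\scrK_K$, hence contains $K$, which together with $\chi(G/L')<\chi(G/K)$ forces $\scrD\subseteq\scrK_K$. Under that reading your crux evaporates: one has $\inf_{L\in\scrK_K}\norm{\nu-\nu\conv\lambda_L}\le\inf_{L\in\scrD}\norm{\nu-\nu\conv\lambda_L}=0$ because the infimum on the left is over a larger family, and Lemma~\ref{lemma:approx} gives $\nu\iin\aiMeas(G,K)$ at once --- which is exactly why the paper can dispose of the first part and the final statement with ``follows immediately from Lemma~\ref{lemma:approx} and its proof.''
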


\Proof
The first part and the final statement follow immediately from
Lemma\;\ref{lemma:approx} and its
proof. For the second part, observe that by Lemma\;\ref{lemma:singular},
$\,\nu\perp\abs\nu\conv\lambda_{L'}$ implies
$\,\nu\perp\abs\nu\conv\lambda_L$ for all $L\supseteq L'$ \,(since
$\Meas(G/L)\subseteq\Meas(G/L')$\,).
\end{proof}

\begin{lemma}
    \label{lemma:linapprox}
Let $G$ be a locally compact group, $K$ a compact subgroup of $G$ such that
$G/K$ is non-metrizable. Put \,$\tau=\chi(G/K)$ and let
\,$\scrD=\{K_\alpha\!:\alpha<\tau\}$ be a subfamily of $\scrK_K$ such that
\,$K_\alpha\supseteq K_\beta$ for $\alpha<\beta<\tau$\,, and
\,$K = \bigcap_{\alpha<\tau} K_\alpha$\,. If $\tau$ is a successor
cardinal \;(i.e., $\tau$ is not the supremum
of the family of smaller cardinals), then $\scrD$ is cofinal in $\scrK_K$\,.
\end{lemma}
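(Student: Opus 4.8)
The plan is to prove the stronger-looking but equivalent statement that every $L\iin\scrK_K$ contains a member of $\scrD$; since $\scrD$ is a descending chain, this amounts to producing a single index $\beta<\tau$ with $K_\beta\subseteq L$. The whole argument takes place inside the compact group $K_0$ (the largest member of $\scrD$), so all the $K_\alpha$ are compact subsets of a fixed compact set.

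The first step is a compactness observation that I would isolate: \emph{for every open set $U\subseteq G$ with $K\subseteq U$ there is an $\alpha<\tau$ with $K_\alpha\subseteq U$.} Indeed, the sets $K_\alpha\setminus U$ are compact (closed in $K_0$) and decrease with $\alpha$, and their intersection is $\bigl(\bigcap_{\alpha<\tau}K_\alpha\bigr)\setminus U=K\setminus U=\emptyset$. By the finite intersection property in the compact space $K_0$ — and since for a chain every finite subintersection equals its smallest member — some $K_\alpha\setminus U$ must already be empty, i.e.\ $K_\alpha\subseteq U$. Next I would use that $\chi(G/L)<\tau$ for $L\iin\scrK_K$: invoking the description of the character recalled in the proof of Lemma~\ref{lemma:basicproperties}, I pick a neighbourhood base of the identity coset in $G/L$ of cardinality $\rho:=\chi(G/L)<\tau$ and pull it back along the quotient map $G\to G/L$, obtaining open sets $U_i\subseteq G$ \,($i\iin I$, $\card{I}=\rho$) with $L\subseteq U_i$ and $\bigcap_{i\in I}U_i=L$. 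Since $K\subseteq L\subseteq U_i$, the first step yields for each $i$ an ordinal $\alpha_i<\tau$ with $K_{\alpha_i}\subseteq U_i$. Put $\beta=\sup_{i\in I}\alpha_i$.

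The final step, and the only place where the hypothesis is used, is the estimate $\beta<\tau$: as $\tau$ is a successor cardinal it is regular, so the supremum of the $\rho<\tau$ many ordinals $\alpha_i<\tau$ remains below $\tau$. Because $\scrD$ is descending this gives $K_\beta\subseteq K_{\alpha_i}\subseteq U_i$ for every $i$, whence $K_\beta\subseteq\bigcap_{i\in I}U_i=L$, which is the desired cofinality. The hard part is precisely this regularity argument: for a singular $\tau$ a single descending chain of length $\tau$ need not be cofinal in $\scrK_K$, and it is exactly the successor (hence regular) assumption that keeps the bounding index $\beta$ below $\tau$.
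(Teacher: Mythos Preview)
Your proof is correct and follows essentially the same route as the paper's: pick a family of $\chi(G/L)$ many neighbourhoods of $L$ with intersection $L$, find for each an index $\alpha_i$ with $K_{\alpha_i}$ inside it, and use regularity of the successor cardinal $\tau$ to bound $\beta=\sup_i\alpha_i<\tau$, whence $K_\beta\subseteq L$. The only cosmetic difference is that the paper packages your compactness observation as the statement that the sets $VK_\alpha$ (with $V$ an $e_G$-neighbourhood, $\alpha<\tau$) form a neighbourhood basis of $K$ in $G/K$, and then selects $V_iK_{\alpha_i}$ inside each member of the $\chi(G/L)$-sized family; you instead isolate the compactness step explicitly and work directly with the pulled-back neighbourhoods $U_i$.
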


\Proof
The argument is similar
as in the proof of Corollary \ref{corollary:basicproperties:monotone}.
The family of sets
$VK_\alpha$ where $V$ is an $e_G$-neighbourhood and $\alpha\!<\!\tau$ defines
a neighbourhood basis in $G/K$\,.
Given $L\iin\scrK_K$\,, it follows that there exists a family
$(V_i)_{i\in I}$ of
$e_G$-neighbour\-hoods and $\alpha_i<\tau$ such that
\,$\bigcap_{i\in I}\,V_iK_{\alpha_i}\subseteq L$\,,
where \,$\card I=\chi(G/L)<\tau$\,. Then the assumptions about $\tau$
imply that \,$\beta=\sup\{\,\alpha_i\!: i\in I\}<\tau$ \,and monotonicity
gives \,$K_\beta\subseteq L$\,.
\end{proof}

\begin{Rem}
The same proof works for limit cardinals $\tau$ that are regular  (i.e.,
if $\tau$ cannot be expressed
as the supremum of a set of cardinality less than $\tau$, whose elements
are cardinals less than $\tau$). Note that in these cases, it follows by
Lemma\;\ref{lemma:approx} that
$\nu=\lim_{\alpha<\tau} \nu\conv\lambda_{K_\alpha} $ (norm limit) holds for
all \,$\nu\iin\aiMeas(G,K)$ and $(K_\alpha)$ as above. One might expect this
to be valid without restrictions on $\tau$\,, but the following
example shows a different behaviour. Let $F$ be
a non-trivial finite group and consider a product group $G=F^\tau$ for an
infinite cardinal $\tau$ and $K=\{e\}$. For infinite $\alpha<\tau$ take
$K_\alpha=F^{\tau\setminus\alpha}$ (embedded into $G$ in the usual way).
For a non-empty subset $I$ of $\tau$ with $\aleph_0\le\card I<\tau$ take
$L=F^{\tau\setminus I}$.
Then $G/K_\alpha\cong F^\alpha$, $G/L\cong F^I$, $\chi(G/K)=\chi(G)=\tau$\,,
$\chi(G/K_\alpha)=\abs\alpha$, $\chi(G/L)=\card I$\,. Thus
$K_\alpha,L\iin\scrK_K$ and clearly \,$\bigcap_\alpha K_\alpha=K$\,. If $I$
can be chosen so that \,$\sup I=\tau$ \,(this works for any limit cardinal
that is not regular), then $K_\alpha\nsubseteq L$ for all $\alpha$\,, hence
$\{K_\alpha\}$ is not cofinal in $\scrK_K$\,. Furthermore, taking
$\nu =\lambda_L\in\aiMeas(G,K)\!=\!\aiMeast{\tau}(G)$, it is easy to see that 
\,$\nu\conv\lambda_{K_\alpha}=\lambda_{LK_\alpha}\perp\nu$\,
for all $\alpha$ \;(since $I\setminus\alpha$ must be infinite), hence
$(\nu\conv\lambda_{K_\alpha})$ does not converge to $\nu$ in norm  (but, in
fact one can see as in Corollary\,\ref{cor:groupapprox}\,\thetag{2} below that
no other limit is possible).
\end{Rem}

\begin{lemma}
    \label{lemma:decomposition}
Let $G$ be a locally compact group, $K$ a compact subgroup of $G$ such that
$G/K$ is non-metrizable, $L\iin\scrK_K$ and $\mu\iin\ssMeas(G,K)$.
Put $\tau=\chi(G/K)$.
There exists a family \,$\scrD=\{K_\alpha:\alpha<\tau\}\subseteq\scrK_K$
such that \,$L\supseteq K_\alpha\supseteq K_\beta$ and
\,$\mu\conv\lambda_{K_\alpha} \perp \mu\conv\lambda_{K_\beta}$ for
$\alpha<\beta<\tau$\,, $K_\beta = \bigcap_{\alpha<\beta} K_\alpha$ for all
limit ordinals \,$\beta<\tau$\,, $\chi(G/K_\alpha)=\chi(G/L)+\abs{\alpha}$
when $\alpha<\tau$ is infinite, and \,$K = \bigcap_{\alpha<\tau} K_\alpha$\,.
\end{lemma}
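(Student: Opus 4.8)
The plan is to reduce to a $\sigma$-compact group on which $\mu$ becomes strongly singular of \emph{top} character, to build the chain by the one--step procedure of Lemma~\ref{lemma:induction}, and then to upgrade the successor orthogonality to full pairwise orthogonality by a single remark about the separating sets. We may assume $\mu\ge0$ (the general case follows since $\ssMeas(G,K)$ is a sublattice by Theorem~\ref{th:measureclasses}\,(iii), and averaging respects $\ll$). As the support of $\mu$ is $\sigma$-compact, pick an open $\sigma$-compact subgroup $H\subseteq G$ with $L\subseteq H$ and $\mu\in\Meas(H)$; by Lemma~\ref{lemma:heredit} and the character formula in Corollary~\ref{corollary:basicproperties:classes} we may replace $G$ by $H$, so $G$ is $\sigma$-compact. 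Let $N=\bigcap_{x\in G}xKx^{-1}$ be the normal core of $K$. By Lemma~\ref{lemma:basicproperties}\,(3) (the $\sigma$-compact case) $\chi(G/N)\le\chi(G/K)+\aleph_0=\tau$, while $N\subseteq K$ gives $\chi(G/N)\ge\tau$; hence $\chi(G/N)=\tau$ and $N$ is not open. Lemma~\ref{lemma:heredit}\,(ii) then yields $\mu\in\ssMeas(G,N)$, and Corollary~\ref{corollary:qIdentification} transports $\mu$ to $\dot\mu\in\ssMeast{\tau}(\bar G)$ on $\bar G=G/N$, where $\bar G$ has character $\tau$; thus $\dot\mu$ is strongly singular of top character, and I normalise $\dot\mu(\bar G)=1$.

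It now suffices to construct, in $\bar G$, a decreasing family of compact subgroups $\bar K_\alpha\supseteq K/N$ ($\alpha<\tau$) with $\bigcap_\alpha\bar K_\alpha=K/N$, with $\chi(\bar G/\bar K_\alpha)=\chi(G/L)+|\alpha|$ for infinite $\alpha$, continuous at limits, and with $\dot\mu\conv\lambda_{\bar K_\alpha}\perp\dot\mu\conv\lambda_{\bar K_\beta}$ for $\alpha<\beta$; pulling back by the isomorphism of Lemma~\ref{Identify} gives the asserted $K_\alpha\subseteq G$. Following the proof of Lemma~\ref{lemma:induction}, I would build compact normal subgroups $\bar M_\alpha$ of $\bar G$ by transfinite recursion: start from $\bar M_0$, the normal core of $L/N$, with $\chi(\bar G/\bar M_0)\le\chi(G/L)+\aleph_0$; at a successor apply Lemma~\ref{lemma:subsaturation} and Corollary~\ref{cor:onestep} to $\dot\mu$ with the current $\bar M_\alpha$ in the role of $K_1$, obtaining $\bar M_{\alpha+1}\subseteq\bar M_\alpha$ with $\chi(\bar M_\alpha/\bar M_{\alpha+1})=\aleph_0$, together with a $\mathsf{K_\sigma}$-set $E_\alpha$ satisfying $\dot\mu(E_\alpha)=1$ and $\dot\mu\conv\lambda_{\bar M_\alpha}(E_\alpha\bar M_{\alpha+1})=0$; at limits take intersections. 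Choosing the neighbourhood constraint in each step from a family of $\tau$ neighbourhoods of $e$ with intersection $\{e\}$ (available since $\chi(\bar G)=\tau$) forces $\bigcap_\alpha\bar M_\alpha=\{e\}$. Putting $\bar K_\alpha=(K/N)\bar M_\alpha$ we get $\bigcap_\alpha\bar K_\alpha=(K/N)\bigcap_\alpha\bar M_\alpha=K/N$ (the first equality holds for a decreasing net of compacta), and Corollary~\ref{corollary:basicproperties:monotone}, applied once the one--step is arranged so that each character drop $\chi(\bar K_\alpha/\bar K_{\alpha+1})$ is exactly $\aleph_0$, gives the character identity.

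The decisive and genuinely new point is the orthogonality. Since $\dot\mu$ is right $(K/N)$-invariant and $\bar M_\alpha$ is normal, $\lambda_{\bar K_\alpha}=\lambda_{K/N}\conv\lambda_{\bar M_\alpha}$ gives $\dot\mu\conv\lambda_{\bar K_\alpha}=\dot\mu\conv\lambda_{\bar M_\alpha}$, so it is enough to separate the $\bar M$-averages. Fix $\alpha<\beta$. The set $E_\alpha\bar M_{\alpha+1}$ produced at step $\alpha$ satisfies $\dot\mu\conv\lambda_{\bar M_\alpha}(E_\alpha\bar M_{\alpha+1})=0$; on the other hand, because $\dot\mu$ lives on $E_\alpha$ and $E_\alpha\bar M_\beta$ is $\bar M_\beta$-invariant, \emph{every} finer average $\dot\mu\conv\lambda_{\bar M_\beta}$ is concentrated on $E_\alpha\bar M_\beta\subseteq E_\alpha\bar M_{\alpha+1}$ (here $\bar M_\beta\subseteq\bar M_{\alpha+1}$). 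Hence $\dot\mu\conv\lambda_{\bar M_\alpha}\perp\dot\mu\conv\lambda_{\bar M_\beta}$ for all $\alpha<\beta$, not merely for consecutive indices. Transporting back by the isometric, order-preserving isomorphism of Lemma~\ref{Identify} and Corollary~\ref{corollary:qIdentification} turns this into $\mu\conv\lambda_{K_\alpha}\perp\mu\conv\lambda_{K_\beta}$, as required.

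The hard part will not be the orthogonality itself: once one notices that the single separating set of a successor step carries all finer averages, pairwise orthogonality comes for free. The real obstacle is the bookkeeping forced by $K$ \emph{not} being normal, so that $G/K$ is only a coset space. Inserting the factor $K/N$, steering $\bigcap_\alpha\bar K_\alpha$ back to exactly $K/N$, and above all verifying that each descent step lowers the character by precisely $\aleph_0$ \emph{transverse} to $K/N$ (so that Corollary~\ref{corollary:basicproperties:monotone} delivers $\chi(G/K_\alpha)=\chi(G/L)+|\alpha|$ rather than something smaller) is where the care is needed; this is arranged by choosing, at each one--step, the auxiliary normal subgroup so that its contribution is not absorbed by $K/N$.
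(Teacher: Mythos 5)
Your proposal follows the paper's own route almost step for step: the reduction to a $\sigma$-compact group, the passage to the normal core $N$ of $K$ so that $\dot\mu\in\ssMeast{\tau}(\bar G)$ on $\bar G=G/N$, the transfinite chain built from the normal core of $L/N$ by Lemma~\ref{lemma:subsaturation} and Corollary~\ref{cor:onestep} (this is precisely Lemma~\ref{lemma:induction}), and the final multiplication by $K/N$. Your upgrade from consecutive to pairwise orthogonality is correct and is the same mechanism as the paper's, only phrased concretely: you observe that every finer average $\dot\mu\conv\lambda_{\bar M_\beta}$ is concentrated on $E_\alpha\bar M_\beta\subseteq E_\alpha\bar M_{\alpha+1}$, a $\dot\mu\conv\lambda_{\bar M_\alpha}$-null set, whereas the paper cites the second part of Lemma~\ref{lemma:singular} with $\nu=\mu\conv\lambda_{\widetilde K_\alpha}$, using $\mu\conv\lambda_{\widetilde K_\beta}\conv\lambda_{\widetilde K_{\alpha+1}}=\mu\conv\lambda_{\widetilde K_{\alpha+1}}$. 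Either version works.

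The one genuine gap is your treatment of the character drop. You claim that $\chi(\bar K_\alpha/\bar K_{\alpha+1})=\aleph_0$ ``is arranged by choosing, at each one-step, the auxiliary normal subgroup so that its contribution is not absorbed by $K/N$''. There is no such choice available: the subgroup produced by Lemma~\ref{lemma:subsaturation} comes out of the Kakutani--Kodaira argument in Lemma~\ref{lemma:gensaturation}, and nothing in that construction lets you steer it ``transversally'' to $K/N$. The correct resolution --- and it is exactly how the paper closes this step --- is that no arrangement is needed, because the orthogonality you have already proved forces the transversality. First, $\chi(\bar K_\alpha/\bar K_{\alpha+1})\le\aleph_0$ automatically: since $\bar M_\alpha$ is normal, the map $m\mapsto m\bar K_{\alpha+1}$ from $\bar M_\alpha$ onto $\bar K_\alpha/\bar K_{\alpha+1}$ exhibits the latter as (homeomorphic to) a quotient of the metrizable group $\bar M_\alpha/\bar M_{\alpha+1}$. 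Second, the drop cannot be finite: if $\bar K_{\alpha+1}$ were open in $\bar K_\alpha$, then $\lambda_{\bar K_{\alpha+1}}\le [\bar K_\alpha\!:\!\bar K_{\alpha+1}]\,\lambda_{\bar K_\alpha}$, hence $\dot\mu\conv\lambda_{\bar K_{\alpha+1}}\ll\dot\mu\conv\lambda_{\bar K_\alpha}$, and combined with $\dot\mu\conv\lambda_{\bar K_{\alpha+1}}\perp\dot\mu\conv\lambda_{\bar K_\alpha}$ this would give $\dot\mu\conv\lambda_{\bar K_{\alpha+1}}=0$, contradicting $\dot\mu\conv\lambda_{\bar K_{\alpha+1}}(\bar G)=1$. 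So $\chi(\bar K_\alpha/\bar K_{\alpha+1})=\aleph_0$ exactly, and Corollary~\ref{corollary:basicproperties:monotone} delivers the character identity; this is why the paper remarks that the non-openness ``need not be true, if one does this construction without $\mu$''. Two minor points: your normalization $\dot\mu(\bar G)=1$ presupposes $\mu\neq0$ (the paper covers $\mu=0$ by adding $\lambda_K\in\ssMeas(G,K)$ to $\mu$), and your compactness argument for $\bigcap_\alpha\bigl((K/N)\bar M_\alpha\bigr)=(K/N)\bigcap_\alpha\bar M_\alpha$ is fine and is also needed at limit stages of the chain.
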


\noindent
Observe that in Lemma\;\ref{lemma:compactSubgroup} the group $K_G$ can
always be chosen to be normalized by a given compact subgroup $K$ of $G$\,.
Then $L=K_GK$ can be used for Lemma\;\ref{lemma:decomposition} and then the
corresponding family  $\scrD$ satisfies $\chi(G/K_\alpha)=\abs{\alpha}$ when
$\alpha$ is infinite.

\Proof
Consider an open $\sigma$-compact subgroup $H\supseteq L$ that contains the
support of~$\mu$\,. By
Corollary\;\ref{corollary:basicproperties:classes} and
Lemma\;\ref{lemma:heredit}, we may replace
$G$ by $H$ and assume that $G$ is $\sigma$-compact. Replacing $\mu$ by
$\abs\mu$\,, we assume $\mu\ge0$ \,(note that if
$\abs\mu\conv\lambda_{K_\alpha} \perp \abs\mu\conv\lambda_{K_\beta}$
then $\mu\conv\lambda_{K_\alpha} \perp  \mu\conv\lambda_{K_\beta}$ and
use Theorem\;\ref{th:measureclasses}). Adding $\lambda_K$ if necessary,
we can assume $\mu\neq0$\,.

Put $N=\bigcap_{x\in G}\,xKx^{-1}$, $N_L\!=\bigcap_{x\in G}\,xLx^{-1}$,
then $N\subseteq K\cap L$\,, both $N,N_L$ are normal in~$G$\,,
$\chi(G/N)=\tau$\,, $\chi(G/N_L)=\chi(G/L)$
\,(Lemma\;\ref{lemma:basicproperties}) and
$\mu\iin\ssMeas(G,N)$ \,(Lemma\;\ref{lemma:heredit}).
Corollary\;\ref{corollary:qIdentification} gives
$\mu\iin\ssMeast{\tau}(G/N)$.
We put $\widetilde K_0=N_L/N$ and apply Lemma\;\ref{lemma:induction} to
\,$\widetilde G=G/N$ and $\mu$\,. This produces a family of compact normal
subgroups $\widetilde K_\alpha$ of  $\widetilde G$ \,($\alpha<\tau$) such that
$\widetilde K_\alpha\supseteq \widetilde K_{\alpha+1}$\,,
$\chi(\widetilde K_{\alpha}/\widetilde K_{\alpha+1})=\aleph_0$ \,and
\,$\mu\conv\lambda_{\widetilde K_\alpha} \perp
\mu\conv\lambda_{\widetilde K_{\alpha+1}}$ for
$\alpha<\tau$\,,
$\widetilde K_\beta = \bigcap_{\alpha<\beta} \widetilde K_\alpha$ for all
limit ordinals \,$\beta<\tau$ and
\,$\bigcap_{\alpha<\tau} \widetilde K_\alpha=\{N\}$\,.
By the second part of Lemma\;\ref{lemma:singular} (applied for
$\widetilde K_{\alpha+1}$ and
$\nu=\mu\conv\lambda_{\widetilde K_\alpha}$), this implies
$\mu\conv\lambda_{\widetilde K_\alpha} \perp
\mu\conv\lambda_{\widetilde K_\beta}$ for $\alpha<\beta<\tau$
\,(we have $\mu\conv\lambda_{\widetilde K_\beta}\conv
\lambda_{\widetilde K_{\alpha+1}}=\mu\conv\lambda_{\widetilde K_{\alpha+1}}$%
).
Then $\widetilde K_\alpha=K'_\alpha/N$ for compact normal subgroups
$K'_\alpha$ of $G$\,.

Finally, put $K_\alpha=K'_\alpha K$\,. By normality, these are compact
subgroups. Observe that $\mu\iin\Meas(G/K)$
implies $\mu=\mu\conv\lambda_K$\,, hence using the identifications of
Lemma\;\ref{Identify} we have \,$\mu\conv\lambda_{\widetilde K_\alpha}=
\mu\conv\lambda_{K'_\alpha}=\mu\conv\lambda_K\conv\lambda_{K'_\alpha}=
\mu\conv\lambda_{K_\alpha}$\,. Thus $\mu\conv\lambda_{K_\alpha}\perp
\mu\conv\lambda_{K_{\alpha+1}}(\neq0)$ which implies that $K_{\alpha+1}$
is not open in $K_\alpha$ \,(this need not be true, if one does this
construction without~$\mu$\,, resp. $\mu=0$, and then one would have to
remove repetitions, passing to some subfamily). Furthermore, since
$K'_\alpha/K'_{\alpha+1}\cong\widetilde K_\alpha/\widetilde K_{\alpha+1}$
\,and $K_\alpha/K_{\alpha+1}$ is homeomorphic to
$K'_\alpha\big/\bigl(K'_{\alpha+1}(K\cap K'_\alpha)\bigr)$\,, it follows that
$\chi(K_{\alpha}/K_{\alpha+1})=\aleph_0$ for\linebreak $\alpha<\tau$\,.
It is easy to see that \,$K_\beta = \bigcap_{\alpha<\beta} K_\alpha$ holds for
limit ordinals \,$\beta<\tau$ and
\,$\bigcap_{\alpha<\tau} K_\alpha=K$\,.
Lemma\;\ref{lemma:basicproperties} implies that
$\chi(G/K_\alpha)=\chi(G/K_0)+\abs{\alpha}$ when
$\alpha$ is infinite. Since $K_0=N_LK\subseteq L$\,, we have
$\chi(G/K_0)=\chi(G/L)$ and it follows that the the $K_\alpha$ will
satisfy our demands.
\end{proof}

\begin{corollary}
	\label{cor:groupapprox}
Let $G$ be a locally compact group, $K$ a compact subgroup of $G$ such that
$G/K$ is non-metrizable.
\item[(1)] \;We have $\bigcap_{L\in\scrK_K}L=K$ \,and
for every $L\iin\scrK_K$ and every infinite cardinal $\tau_1$ with
$\chi(G/L)\le\tau_1<\chi(G/K)$ there exists $L'\iin\scrK_K$
with  $\chi(G/L')=\tau_1$\,, $L'\subseteq L$\,.
\item[(2)] \;For $f\iin C_0(G/K)$ we have
\;$\lim_{L\in\scrK_K} \lambda_L\odot f=f$
in the norm topology. It follows that for
$\mu\iin\Meas(G/K)$, \;$\lim_{L\in\scrK_K} \mu\conv\lambda_L=\mu$
holds in the \wstar topology of $\MeasG$\,, i.e.,
$\lim_{L\in\scrK_K} \langle\,\mu\conv\lambda_L\,,\,f\,\rangle=
\langle\mu,f\rangle$ \,for $f\in C_0(G)$.
\end{corollary}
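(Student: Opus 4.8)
The plan is to derive both statements from a good understanding of the directed family $\scrK_K$, and then to read off the convergence in (2) from a routine uniform–continuity estimate. Throughout I would first reduce to the $\sigma$-compact case: choosing an open $\sigma$-compact subgroup $H$ containing $K$ (and, where needed, a prescribed point or a given $L\iin\scrK_K$), the identity $\chi(G/H_1)=\chi(H/(H_1\cap H))$ from Corollary~\ref{corollary:basicproperties:classes} shows that the characters of all coset spaces involved are computed inside $H$, so I may assume $G$ itself is $\sigma$-compact. Put $\tau=\chi(G/K)$.

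For the equality $\bigcap_{L\in\scrK_K}L=K$ the inclusion $\supseteq$ is trivial. For $\subseteq$, take $x\notin K$; since $K$ is compact there is an $e_G$-neighbourhood $V$ with $x\notin KV$. By the Kakutani--Kodaira theorem (Lemma~\ref{lemma:KKthm}) there is a compact normal subgroup $N\subseteq V$ of $G$ with $\chi(G/N)\le\aleph_0$; then $L:=KN$ is a compact subgroup with $K\subseteq L\subseteq KV$, and $\chi(G/L)\le\chi(G/N)\le\aleph_0<\tau$ by Lemma~\ref{lemma:basicproperties}, so $L\iin\scrK_K$ and $x\notin L$. This already realizes arbitrarily small members of $\scrK_K$ inside any neighbourhood $KV$ of $K$, a fact I will reuse for (2).

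For the second assertion of (1) I would build, for a given $L\iin\scrK_K$, a continuous decreasing chain of compact subgroups $L=K_0\supseteq K_\alpha\supseteq K$ ($\alpha\le\tau$), with $K_\beta=\bigcap_{\alpha<\beta}K_\alpha$ at limits, $\chi(K_\alpha/K_{\alpha+1})=\aleph_0$ at successors, and $\bigcap_{\alpha<\tau}K_\alpha=K$. The successor step is exactly the subgroup part of Lemma~\ref{lemma:induction}/Lemma~\ref{lemma:decomposition}, carried out \emph{without} the auxiliary measure: given $K_\alpha\neq K$, pick $N\subseteq V_\alpha$ compact normal in $G$ with $\chi(G/N)\le\aleph_0$ (Lemma~\ref{lemma:KKthm}) and set $K_{\alpha+1}=K(K_\alpha\cap N)$; this is a compact subgroup with $\chi(K_\alpha/K_{\alpha+1})\le\aleph_0$ (Lemma~\ref{lemma:basicproperties}), and letting the $V_\alpha$ run through a neighbourhood base of $K$ with $\bigcap_\alpha KV_\alpha=K$ forces $\bigcap_\alpha K_\alpha=K$. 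Granting that the steps can be arranged to have character \emph{exactly} $\aleph_0$, Corollary~\ref{corollary:basicproperties:monotone} gives $\chi(G/K_\alpha)=\chi(G/L)+\card{\alpha}$; since $\chi(G/L)\le\tau_1$, the subgroup $L'=K_{\tau_1}\subseteq L$ then satisfies $\chi(G/L')=\tau_1$, as required. The main obstacle lies precisely here: without a measure to force non-discreteness (the role it plays in Corollary~\ref{cor:onestep}), a single step $K(K_\alpha\cap N)\subsetneq K_\alpha$ may produce a \emph{finite} quotient, which would break the clean lower bound $\chi(G/K_{\tau_1})\ge\tau_1$; one removes such ``repetitions'' and finite jumps by shrinking $V_\alpha$ so that $K_\alpha\not\subseteq KV_\alpha$ and by amalgamating consecutive finite steps, exactly the bookkeeping indicated in the remark following Lemma~\ref{lemma:decomposition}.

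For (2), identify $C_0(G/K)$ with the right $K$-periodic functions in $C_0(G)$ (Lemma~\ref{Identify}), so that $\lambda_L\odot f(y)=\int f(yx)\,d\lambda_L(x)$. Given $\varepsilon>0$, right uniform continuity of $f$ yields an $e_G$-neighbourhood $V$ with $\abs{f(yx)-f(y)}<\varepsilon$ for all $y\iin G$, $x\iin V$; using $K$-periodicity ($f(yk)=f(y)$ for $k\iin K$) the same bound then holds for all $x\iin KV$. By the first paragraph there is $L_V\iin\scrK_K$ with $L_V\subseteq KV$, and for every $L\iin\scrK_K$ with $L\subseteq L_V$ the estimate $\abs{\lambda_L\odot f(y)-f(y)}\le\int_L\abs{f(yx)-f(y)}\,d\lambda_L(x)<\varepsilon$ holds uniformly in $y$; hence $\norm{\lambda_L\odot f-f}\le\varepsilon$ and $\lim_{L\in\scrK_K}\lambda_L\odot f=f$ in norm. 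Finally, for $\mu\iin\Meas(G/K)$ and $f\iin C_0(G)$, the relation \thetag{\#} gives $\langle\mu\conv\lambda_L,f\rangle=\langle\mu,\lambda_L\odot f\rangle$; since $\mu=\mu\conv\lambda_K$ and $\lambda_L\conv\lambda_K=\lambda_L$, I may replace $f$ by $\lambda_K\odot f\iin C_0(G/K)$ and conclude $\langle\mu\conv\lambda_L,f\rangle\to\langle\mu,f\rangle$ from the norm convergence just established together with boundedness of $\mu$. Thus the \wstar convergence $\lim_{L\in\scrK_K}\mu\conv\lambda_L=\mu$ follows, and the only genuinely delicate point is the chain construction realizing every intermediate character.
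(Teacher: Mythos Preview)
Your argument for part~(2) is correct and is exactly what the paper does, only spelled out in more detail: the paper's proof just says that compactness gives $L\subseteq V$ for some $L\iin\scrK_K$, that uniform continuity then yields norm convergence of $\lambda_L\odot f$, and that the \wstar statement follows by duality via $\langle\mu,f\rangle=\langle\mu,\lambda_K\odot f\rangle$.

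For part~(1) your approach diverges from the paper's. The paper simply reads both assertions off Lemma~\ref{lemma:decomposition}: that lemma already produces a decreasing chain $(K_\alpha)_{\alpha<\tau}\subseteq\scrK_K$ below a given $L$, with $\bigcap_\alpha K_\alpha=K$ and $\chi(G/K_\alpha)=\chi(G/L)+\card\alpha$ for infinite $\alpha$, so both the intersection $\bigcap_{L\in\scrK_K}L=K$ and the existence of $L'$ with any prescribed intermediate character are immediate. The crucial point you struggled with---forcing $\chi(K_\alpha/K_{\alpha+1})=\aleph_0$ rather than a finite jump---is handled in Lemma~\ref{lemma:decomposition} by the measure: one may always take $\mu=\lambda_K$, which lies in $\ssMeas(G,K)$ by the remark following Lemma~\ref{lemma:approx}, and then $\mu\conv\lambda_{K_\alpha}\perp\mu\conv\lambda_{K_{\alpha+1}}\neq0$ forces $K_{\alpha+1}$ to be non-open in $K_\alpha$ (this is exactly Corollary~\ref{cor:onestep}\,(3)). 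Your measure-free reconstruction is in principle workable and is even acknowledged parenthetically in the proof of Lemma~\ref{lemma:decomposition} (``one would have to remove repetitions, passing to some subfamily''), but it trades the clean one-line citation for the bookkeeping you flagged as the ``main obstacle''. In short: your route is more elementary but longer, while the paper's route reuses machinery already in place and sidesteps the finite-index issue entirely.
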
\noindent
Since the embedding of $C_0(G/K)$ (mentioned after Lemma\,\ref{Identify}) is
isometric one can use in \thetag{2} either $\lVert\cdot\rVert_\infty$ of
$C_0(G/K)$ or that of $C_0(G)$. Corresponding limit relations as in \thetag{2}
hold for any downwards directed family $\scrD$ of compact subgroups of $G$
satisfying \,$\bigcap\scrD=K$\,.

\Proof
Part\,\thetag{1} follows immediately from
Lemma\;\ref{lemma:decomposition}. If $V$ is a neighbourhood
of $K$ in $G$\,, it follows by compactness that $L\subseteq V$ holds for some
$L\iin\scrK_K$ and (by uniform continuity) this implies convergence of
\,$(\lambda_L\odot f)_{L\in\scrK_K}$ for
$f\in C_0(G/K)$. Then the last statement follows by duality
(recall that $\langle\mu,f\rangle=\langle\mu,\lambda_K\odot f\rangle$ and
$\lambda_K\odot f\iin C_0(G/K)$ for $\mu\iin\Meas(G/K),\:f\in C_0(G)$\,).
\end{proof}

\begin{lemma}
    \label{lemma:cofinalsets}
Let $G$ be a locally compact group, $K$ a compact subgroup of $G$ such that
$G/K$ is non-metrizable, and $\mu\iin\ssMeas(G,K)$. There exist two cofinal
subsets $\scrC$ and $\scrC'$ of $\scrK_K$
such that \,$\mu\conv\lambda_L \perp \mu\conv\lambda_{L'}$
for all $L\iin\scrC$, $L'\iin\scrC'$.
\end{lemma}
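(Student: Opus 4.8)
The plan is to produce, inside $\scrK_K$, a cofinal family whose associated averaged measures are pairwise mutually singular, and then to split it; bipartite orthogonality of the two halves is then automatic, since disjoint subfamilies of a pairwise mutually singular family separate. Write $\tau=\chi(G/K)$ and assume $\mu\ge0$, $\mu\neq0$ (otherwise the statement is trivial). Since the support of $\mu$ lies in an open $\sigma$-compact subgroup $H\supseteq K$, I would first reduce to the $\sigma$-compact case: by Corollary~\ref{corollary:basicproperties:classes} the assignment $L\mapsto L\cap H$ compares $\scrK_K$ with its analogue in $H$ without changing characters, and by Lemma~\ref{lemma:heredit} the class $\ssMeas$ restricts correctly, so a cofinal orthogonal pair in $H$ yields one in $G$; afterwards, using Lemma~\ref{lemma:basicproperties}(3) and Corollary~\ref{corollary:qIdentification}, one may replace $K$ by a normal subgroup.

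The engine is Lemma~\ref{lemma:decomposition}: below any prescribed $L\in\scrK_K$ it produces a decreasing $\tau$-chain $\scrD=\{K_\alpha:\alpha<\tau\}\subseteq\scrK_K$ with $K_\alpha\subseteq L$, $\bigcap_\alpha K_\alpha=K$, and $\mu\conv\lambda_{K_\alpha}\perp\mu\conv\lambda_{K_\beta}$ for all $\alpha\neq\beta$. Two features of such a chain are crucial. First, the counting principle of Lemma~\ref{lemma:thinAvoidance}: a single finite measure fails to be singular to at most countably many members of a mutually singular family, so any fixed $\mu\conv\lambda_{M}$ is orthogonal to $\mu\conv\lambda_{K_\alpha}$ for all but countably many $\alpha$. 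Second, the separations are \emph{robust}: the orthogonality in Lemma~\ref{lemma:decomposition} comes (via Lemma~\ref{lemma:subsaturation} and Corollary~\ref{cor:onestep}, or Lemma~\ref{lemma:gensaturation}) from an $N$-saturated set $B$ on which $\mu\conv\lambda_{K_\alpha}$ is concentrated while $\mu\conv\lambda_{K_\beta}(B)=0$; an elementary computation (using $BN=B$ and $\mu\conv\lambda_N=\mu\conv\lambda_P\conv\lambda_N$) then shows that \emph{every} $P\in\scrK_K$ with $P\subseteq N$ still has $\mu\conv\lambda_P$ concentrated on $B$, so orthogonality to $\mu\conv\lambda_{K_\beta}$ is inherited by all subgroups below the scale $N$.

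When $\tau$ is a successor cardinal, and more generally when $\tau$ is regular, Lemma~\ref{lemma:linapprox} and the Remark following it guarantee that a single such chain $\scrD$ is already cofinal in $\scrK_K$. I would then partition the index set $\tau$ into two cofinal subsets $S_0,S_1$ and take $\scrC=\{K_\alpha:\alpha\in S_0\}$, $\scrC'=\{K_\alpha:\alpha\in S_1\}$: each is cofinal in $\scrK_K$, and being disjoint parts of a pairwise mutually singular family, they satisfy $\mu\conv\lambda_L\perp\mu\conv\lambda_{L'}$ for $L\in\scrC$, $L'\in\scrC'$.

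The main obstacle is that for general $\tau$ no single chain need be cofinal: when $\tau$ is singular, $\scrK_K$ can have cofinality strictly larger than $\tau$ (already for $G=F^\tau$, $K=\{e\}$, where this cofinality is $\mathrm{cf}([\tau]^{<\tau})>\tau$), so any cofinal $\scrC$ must contain more than $\tau$ subgroups. I would build $\scrC,\scrC'$ by transfinite recursion along a fixed cofinal enumeration $\{L_\xi:\xi<\theta\}$ of $\scrK_K$, at stage $\xi$ adding members below $L_\xi$ (keeping both families cofinal) and invoking Lemma~\ref{lemma:decomposition} below $L_\xi$ together with the countable-avoidance bound to orthogonalize against the opposite family. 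The hard point is precisely that at late stages one must secure orthogonality against more than $\tau$ previously chosen subgroups, so the plain avoidance argument — which only defeats fewer than $\tau$ constraints against a length-$\tau$ chain — is insufficient, and naively intersecting all the separating scales would push the character up to $\tau$ and leave $\scrK_K$. I expect this to be overcome by exploiting the robustness above, recording for each chosen member a saturated separating set with its scale and arranging these scales to descend coherently, thereby replacing the many individual orthogonality demands by the single demand that later members lie below a controlled scale; the $l^1$-decomposition of $\sMeasG$ by character (Theorem~\ref{th:measureclasses}(ii) and the following Remark), under which strongly singular measures of distinct characters are automatically mutually singular, should supply the extra leverage needed to keep the two families separated across different scales. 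Verifying that this bookkeeping survives through all $\theta$ stages while the descending scales remain inside $\scrK_K$ is the technical heart of the proof.
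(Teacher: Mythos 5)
Your treatment of the case where $\tau=\chi(G/K)$ is a successor (or, by the Remark following Lemma~\ref{lemma:linapprox}, any regular) cardinal is correct and coincides with the paper's own Case~I: a single chain $\scrD=\{K_\alpha:\alpha<\tau\}$ produced by Lemma~\ref{lemma:decomposition} is cofinal in $\scrK_K$ by Lemma~\ref{lemma:linapprox}, the measures $\mu\conv\lambda_{K_\alpha}$ are pairwise mutually singular, and splitting the index set into two cofinal pieces finishes the argument.

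The genuine gap is the limit-cardinal case, where what you offer is a plan rather than a proof: you set up a transfinite recursion along a cofinal enumeration of $\scrK_K$ of length $\theta>\tau$, correctly observe that plain avoidance fails and that intersecting the separating scales would push the character up to $\tau$, and then leave the decisive step (the ``coherently descending scales'' and the bookkeeping you yourself call the technical heart) unverified. The paper resolves this case with \emph{no} recursion and no interaction between the two families at all. The observation is this: run Lemma~\ref{lemma:decomposition} below an arbitrary $L_0\iin\scrK_K$ and stop the chain at any successor cardinal level $\tau_1$ with $\chi(G/L_0)<\tau_1<\tau$; then $\chi(G/K_{\tau_1})=\tau_1$, the truncated chain $\{K_\alpha:\alpha<\tau_1\}$ is cofinal in $\scrK_{K_{\tau_1}}$ (Lemma~\ref{lemma:linapprox}, $\tau_1$ being a successor), and since $\mu\conv\lambda_{K_{\tau_1}}\perp\mu\conv\lambda_{K_\alpha}$ for $\alpha<\tau_1$ by Lemma~\ref{lemma:decomposition}, Corollary~\ref{corollary:inbetween} yields $\mu\conv\lambda_{K_{\tau_1}}\iin\ssMeas(G,K_{\tau_1})$. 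Consequently the two families $\scrC=\{L\iin\scrK_K:\ \chi(G/L)\text{ is an even cardinal and }\mu\conv\lambda_L\iin\ssMeas(G,L)\}$ and $\scrC'$ (defined with odd cardinals) are each cofinal in $\scrK_K$, because successor cardinals of both parities are cofinal below the limit cardinal $\tau$; and the required orthogonality $\mu\conv\lambda_L\perp\mu\conv\lambda_{L'}$ for $L\iin\scrC$, $L'\iin\scrC'$ is \emph{automatic}, since $\mu\conv\lambda_L\iin\ssMeast{\chi(G/L)}(G)$, $\mu\conv\lambda_{L'}\iin\ssMeast{\chi(G/L')}(G)$, and $\ssMeast{\tau_1}(G)\perp\ssMeast{\tau_2}(G)$ for $\tau_1\neq\tau_2$ (Theorem~\ref{th:measureclasses}). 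You actually cite this last fact, but only as auxiliary ``leverage'' inside your recursion; used directly, it eliminates the recursion, the enumeration of $\scrK_K$, and all the bookkeeping. The missing idea is thus not a new ingredient but the assembly: tag each chosen subgroup by the parity of its character and let the $l^1$-decomposition of $\sMeasG$ by character do all the separating.
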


\Proof
Put $\tau=\chi(G/K)$. For the reasons explained in the remark after
Lemma\;\ref{lemma:linapprox}, we separate two cases.\\
\textit{Case I:}
\,$\tau$ is a successor cardinal.
Let $\scrD=\{K_\alpha:\alpha<\tau\}$ be a subfamily of $\scrK_K$ as in
Lemma\;\ref{lemma:decomposition}. By Lemma\;\ref{lemma:linapprox}, $\scrD$
is cofinal in $\scrK_K$\,. Now we split $\tau$ into two cofinal subsets.
For example, take\vspace{-1.5mm}
\begin{align*}
\scrC & =
\{ K_\alpha \colon \alpha<\chi(G/K), \;\alpha \text{ is an even ordinal\,} \}
\\
\scrC' & =
\{ K_\alpha \colon \alpha<\chi(G/K), \;\alpha \text{ is an \,odd ordinal\,} \}
\end{align*}\vskip -1.5mm\noindent
(Recall that an ordinal $\alpha$ is called even if $\alpha=\alpha_0+n$, where
$n$ is a finite even number and either $\alpha_0$ is a limit ordinal or
$\alpha_0=0$\,).\\
\textit{Case II:}
\,$\chi(G/K)$ is a limit cardinal. We use the following observation: \
If $\scrD$ is a family obtained by Lemma\;\ref{lemma:decomposition} and
$\tau_1$ is an infinite successor cardinal with $\chi(G/L)<\tau_1<\tau$\,,
then $\chi(G/K_{\tau_1})=\tau_1$ and (by Lemma\;\ref{lemma:linapprox})
$\scrD_{\tau_1}=\{K_\alpha:\alpha<\tau_1\}$ is cofinal in
$\scrK_{K_{\tau_1}}$. By Corollary\;\ref{corollary:inbetween}, it follows
that $\mu\conv\lambda_{K_{\tau_1}}\iin\ssMeas(G,K_{\tau_1})$. Thus
(refining Corollary\;\ref{cor:groupapprox}\,\thetag{1}\,) the groups
$L\iin\scrK_K$ for which $\mu\conv\lambda_L \iin \ssMeas(G,L)$ form a cofinal
subset of $\scrK_K$ and all infinite successor cardinals ($<\tau$) thereby
arise as $\chi(G/L)$.

Now, we split the set of cardinals $<\tau$
into two cofinal subsets.
Combined, we might take for example,\vspace{-1.5mm}
\begin{align*}
\scrC &= \{ L\iin\scrK_K \colon \chi(G/L) \text{ is an even cardinal \;and \;}
\mu\conv\lambda_L \iin \ssMeas(G,L)\,\}   \\
\scrC' &= \{ L\iin\scrK_K \colon \chi(G/L)
\text{ is an \,odd cardinal \;and \;}
\mu\conv\lambda_L \iin \ssMeas(G,L)\,\}
\end{align*}\vskip -1.5mm\noindent
(Recall that a cardinal $\tau_1=\aleph_\alpha$ is called even if $\alpha$ is
an even ordinal). Then cofinality of $\scrC,\scrC'$ easily follows.
By definition, $\ssMeast{\tau_1}(G)\perp\ssMeast{\tau_2}(G)$ \,holds for
$\tau_1\neq\tau_2$\,. This gives
\,$\mu\conv\lambda_L \perp \mu\conv\lambda_{L'}$
for all $L\iin\scrC$, $L'\iin\scrC'$.
\end{proof}

\begin{proposition}
    \label{ssnolimit}
Let $G$ be a locally compact group, $K$ a compact subgroup of $G$ such that
$G/K$ is non-metrizable. If $\mu\iin\Meas(G/K)$ but $\mu\notin\aiMeas(G,K)$,
then the net \,$(\mu\conv\lambda_L)_{L\in\,\scrK_K}$ is not convergent
for the \wstar topology of $\MeasG^{\ast\ast}$, i.e., there exists
$h\iin\MeasG^\ast$ such that
$(\,\langle\,h\,,\mu\conv\lambda_L\rangle\,)_{L\in\,\scrK_K}$ does not
converge.
\end{proposition}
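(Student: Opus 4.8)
The plan is to reduce to the case $\mu\iin\ssMeas(G,K)$ with $\mu\neq0$ and then to manufacture a single functional $h\iin\MeasG^\ast$ whose values along the net oscillate between two distinct numbers. First I would use the decomposition $\Meas(G/K)=\ssMeas(G,K)\oplus\aiMeas(G,K)$ of Theorem~\ref{th:measureclasses}(iii) to write $\mu=\mu_s+\mu_a$. Since $\mu\notin\aiMeas(G,K)$ we have $\mu_s\neq0$, while by Lemma~\ref{lemma:approx} the net $(\mu_a\conv\lambda_L)_{L\in\scrK_K}$ converges in norm to $\mu_a$, hence weak$^\ast$. Consequently $(\mu\conv\lambda_L)$ converges weak$^\ast$ if and only if $(\mu_s\conv\lambda_L)$ does, so I may replace $\mu$ by $\mu_s$ and assume $\mu\iin\ssMeas(G,K)$, $\mu\neq0$.

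The key quantitative input is Corollary~\ref{cor:groupapprox}(2): the net $(\mu\conv\lambda_L)_{L\in\scrK_K}$ converges to $\mu$ in the weak$^\ast$ topology of $\MeasG$, i.e.\ tested against $C_0(G)$. I fix $g\iin\unitball\bigl(C_0(G)\bigr)$ with $a:=\langle\mu,g\rangle\neq0$, which is possible because $\mu\neq0$ forces $\sup\{\abs{\langle\mu,g\rangle}:g\iin\unitball(C_0(G))\}=\norm{\mu}>0$. Then $\langle\mu\conv\lambda_L,g\rangle\to a$ along the whole net. The idea is to build $h$ that reproduces the pairing with $g$ on one cofinal part of the net but its \emph{negative} on another cofinal part; evaluating $h$ along the net then forces oscillation between $a$ and $-a$.

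To separate the two parts I invoke Lemma~\ref{lemma:cofinalsets}, giving cofinal subsets $\scrC,\scrC'$ of $\scrK_K$ with $\mu\conv\lambda_L\perp\mu\conv\lambda_{L'}$ whenever $L\iin\scrC$, $L'\iin\scrC'$. I want orthogonal $L$-subspaces $D_1\supseteq\{\mu\conv\lambda_L:L\iin\scrC\}$ and $D_2\supseteq\{\mu\conv\lambda_{L'}:L'\iin\scrC'\}$, and then apply Lemma~\ref{lemma:fextension} with the functionals $h_1(\nu)=\langle\nu,g\rangle$ on $D_1$ and $h_2(\nu)=-\langle\nu,g\rangle$ on $D_2$ (each of norm $\le\norm{g}_\infty\le1$) to obtain $h\iin\unitball(\MeasG^\ast)$ agreeing with $h_1$ on $D_1$ and with $h_2$ on $D_2$. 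When $\chi(G/K)$ is a successor cardinal (Case~I of Lemma~\ref{lemma:cofinalsets}) the measures $\mu\conv\lambda_{K_\alpha}$ furnished by Lemma~\ref{lemma:decomposition} are \emph{pairwise} singular, so I would instead apply Lemma~\ref{lemma:fextension} directly to the pairwise orthogonal one-dimensional spaces $\lspan\{\mu\conv\lambda_{K_\alpha}\}$, alternating the sign of $h_\alpha=\pm\langle\cdot,g\rangle$ according to the parity of $\alpha$. When $\chi(G/K)$ is a limit cardinal (Case~II) the members of $\scrC$ need not be mutually singular, and the point is to take $D_1=\bigoplus_{\sigma\text{ even}}\ssMeast{\sigma}(G)$ and $D_2=\bigoplus_{\sigma\text{ odd}}\ssMeast{\sigma}(G)$: these are orthogonal $L$-subspaces by the $\ell^1$-decomposition of $\sMeasG$ recorded after Theorem~\ref{th:measureclasses}, and $\mu\conv\lambda_L\iin\ssMeast{\chi(G/L)}(G)$ lands in $D_1$ or $D_2$ exactly according to the parity built into $\scrC,\scrC'$.

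With $h$ in hand, for $L\iin\scrC$ one has $\langle h,\mu\conv\lambda_L\rangle=\langle\mu\conv\lambda_L,g\rangle$, and for $L\iin\scrC'$ one has $\langle h,\mu\conv\lambda_L\rangle=-\langle\mu\conv\lambda_L,g\rangle$. By Corollary~\ref{cor:groupapprox}(2) the first expression tends to $a$ and the second to $-a$; since $\scrC$ and $\scrC'$ are both cofinal in $\scrK_K$ and $a\neq-a$, the scalar net $(\langle h,\mu\conv\lambda_L\rangle)_{L\in\scrK_K}$ is cofinally near $a$ and cofinally near $-a$, hence cannot converge. Therefore $(\mu\conv\lambda_L)$ does not converge weak$^\ast$ in $\MeasG^{\ast\ast}$. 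I expect the main obstacle to be exactly the construction of a norm-one separating $h$ in the limit-cardinal case, where singularity within $\scrC$ fails; the resolution is to pass from the individual measures to the two complementary orthogonal bands supplied by the structure theorem, so that Lemma~\ref{lemma:fextension} still produces $h$ and the uniform limiting values $\pm a$ come from the weak$^\ast$ convergence of each cofinal part to $\mu$.
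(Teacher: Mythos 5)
Your proposal is correct and is essentially the paper's own proof: the same reduction to $\mu\iin\ssMeas(G,K)$, $\mu\neq0$, via Theorem~\ref{th:measureclasses}\,(iii) and Lemma~\ref{lemma:approx}, the same construction of $h$ via Lemma~\ref{lemma:cofinalsets} and Lemma~\ref{lemma:fextension} (equal to a test functional on the span of $\{\mu\conv\lambda_L : L\iin\scrC\}$ and its negative on the span over $\scrC'$), and the same oscillation conclusion from Corollary~\ref{cor:groupapprox}\,(2) applied to the two cofinal subfamilies. Your case-split (successor versus limit cardinal) is unnecessary, though harmless: Lemma~\ref{lemma:fextension} with just the two subspaces $D_1,D_2$ only requires $D_1\perp D_2$, and cross-singularity of the generators --- exactly what the statement of Lemma~\ref{lemma:cofinalsets} provides --- already implies mutual singularity of their linear spans, so no singularity \emph{within} $\scrC$ or $\scrC'$ is needed.
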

    
\Proof
Decomposing $\mu$ by Theorem\,\ref{th:measureclasses}\,\thetag{iii} and
applying Lemma\,\ref{lemma:approx} to the
$\aiMeas$-com\-ponent, we may assume that
$\mu\in\ssMeas(G,K)$. Since $\mu\neq0$ there exists $f\iin C_0(G/K)$ such that
$\langle\mu,f\rangle\neq0$\,. By Corollary\;\ref{cor:groupapprox}, we have
\,$\lim_{L\in\scrK_K} \langle\mu\conv\lambda_L,f\,\rangle=\langle\mu,f\rangle$
and the same limit arises for any cofinal subfamily of $\scrK_K$\,.
Now choose subsets $\scrC$ and $\scrC'$ as in Lemma\;\ref{lemma:cofinalsets}.
By Lemma\;\ref{lemma:fextension} there exists $h\iin\MeasG^\ast$ such that
$h=f$ on the linear subspace generated by $\{\mu\conv\lambda_L\!:L\iin\scrC\}$
and $h=-f$ on the linear subspace generated by
$\{\mu\conv\lambda_L\!:L\iin\scrC'\}$ \;(with $\norm{h}\le\norm{f}$).
\end{proof}

\begin{theorem}	\label{th:caseII}
Let $G$ be a locally compact group and $K$ a compact subgroup of $G$\,.
Then \,$Z_t\bigl(\MeasG^{\ast\ast}\bigr)\cap\Meas(G/K)^{\ast\ast}\,\subseteq\;
\Meas(G/K)$.
\end{theorem}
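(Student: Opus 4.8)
The plan is to prove the statement by transfinite induction on the character $\chi(G/K)$. The base case $\chi(G/K)\le\aleph_0$, which is exactly the case that $G/K$ is metrizable, is already covered by Theorem~\ref{th:caseI}. So I assume $\tau:=\chi(G/K)>\aleph_0$ and that the conclusion holds for every compact subgroup $L$ with $\chi(G/L)<\tau$; this inductive hypothesis applies in particular to every $L\iin\scrK_K$.

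The first step is to split off the strongly singular part. By Theorem~\ref{th:measureclasses}\thetag{iii} there is the orthogonal (hence $G$-invariant) topological direct sum $\Meas(G/K)=\ssMeas(G,K)\oplus\aiMeas(G,K)$, and by Corollary~\ref{cor:thinss} every $\mu\iin\ssMeas(G,K)$ is $\card{G/K}$-thin, while $d(\ssMeas(G,K))\le d(\Meas(G/K))=\card{G/K}$. Applying Theorem~\ref{th:factorizationsub} with $M_0=\ssMeas(G,K)$, $M_1=\aiMeas(G,K)$, thinness cardinal $\card{G/K}$, and ambient $G$-invariant sum $\ssMeast{\tau}(G)\oplus\aiMeast{\tau}(G)$ (which is $G$-invariant and contains $M_0,M_1$ by Theorem~\ref{th:measureclasses}\thetag{i},\thetag{ii}) yields
\[ Z_t(\MeasG^{\ast\ast})\cap\Meas(G/K)^{\ast\ast}\subseteq \ssMeas(G,K)\oplus\aiMeas(G,K)^{\ast\ast}. \]
Given $\mathfrak m$ in the left-hand side, I write $\mathfrak m=\mu_0+\mathfrak n$ with $\mu_0\iin\ssMeas(G,K)\subseteq\MeasG\subseteq Z_t(\MeasG^{\ast\ast})$; then $\mathfrak n=\mathfrak m-\mu_0\iin Z_t(\MeasG^{\ast\ast})\cap\aiMeas(G,K)^{\ast\ast}$, so it suffices to prove that this approximately invariant bidual component is a genuine measure, i.e.\ $\mathfrak n\iin\aiMeas(G,K)$.

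For each $L\iin\scrK_K$ I set $\mu_L=\mathfrak n\squ\lambda_L$. Right multiplication by $\lambda_L$ is \wstar continuous for the first Arens product and carries $\Meas(G/K)$ into $\Meas(G/L)$; since $\mathfrak n$ is a \wstar limit of measures in $\aiMeas(G,K)\subseteq\Meas(G/K)$, it follows that $\mu_L\iin\Meas(G/L)^{\ast\ast}$, and as $Z_t(\MeasG^{\ast\ast})$ is a subalgebra containing $\lambda_L$, also $\mu_L\iin Z_t(\MeasG^{\ast\ast})$. Because $\chi(G/L)<\tau$, the inductive hypothesis gives $\mu_L\iin\Meas(G/L)\subseteq\aiMeas(G,K)$, an actual measure. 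Next I show $\mu_L\to\mathfrak n$ \wstar: for a \wstar cluster point $\mathfrak e$ of the net $(\lambda_L)_{L\in\scrK_K}$ and any $\nu\iin\aiMeas(G,K)$ one has $\nu\squ\mathfrak e=\nu$, since $\nu\conv\lambda_L\to\nu$ in norm by Lemma~\ref{lemma:approx} and $\mathfrak p\mapsto\nu\squ\mathfrak p$ is \wstar continuous ($\nu\iin Z_t$); passing to the \wstar limit of a net in $\aiMeas(G,K)$ converging to $\mathfrak n$ then gives $\mathfrak n\squ\mathfrak e=\mathfrak n$. As $\mathfrak p\mapsto\mathfrak n\squ\mathfrak p$ is \wstar continuous ($\mathfrak n\iin Z_t$), every cluster point of $(\mu_L)=(\mathfrak n\squ\lambda_L)$ equals $\mathfrak n$, so this bounded net converges \wstar to $\mathfrak n$.

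Finally I manufacture a measure to feed into Proposition~\ref{ssnolimit}. Define $\mu\iin\Meas(G/K)$ by $\langle\mu,f\rangle=\langle\mathfrak n,\lambda_K\odot f\rangle$ for $f\iin C_0(G)$; using the action identity $\lambda_K\odot(\lambda_L\odot f)=(\lambda_K\conv\lambda_L)\odot f=\lambda_L\odot f$ for $L\supseteq K$, one checks that $\mu\conv\lambda_L$ and $\mu_L$ agree on $C_0(G)$, hence $\mu\conv\lambda_L=\mu_L$ for every $L\iin\scrK_K$. Thus the net $(\mu\conv\lambda_L)=(\mu_L)$ converges \wstar (to $\mathfrak n$), so Proposition~\ref{ssnolimit} forces $\mu\iin\aiMeas(G,K)$; by Lemma~\ref{lemma:approx} this gives $\mu\conv\lambda_L\to\mu$ in norm, i.e.\ $\mu_L\to\mu$ in norm, and comparing with $\mu_L\to\mathfrak n$ \wstar yields $\mathfrak n=\mu\iin\aiMeas(G,K)$, closing the induction. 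The main obstacle is exactly this last stretch: the inductive hypothesis only guarantees that each $\mu_L$ is a measure, and \wstar convergence of the $\mu_L$ can never by itself return the limit to the norm-closed subspace $\aiMeas(G,K)$. It is Proposition~\ref{ssnolimit}, applied to the auxiliary measure $\mu$ whose $\lambda_L$-translates reproduce the $\mu_L$, that converts \wstar convergence into genuine membership and so breaks the otherwise circular passage from the bidual back to $\MeasG$.
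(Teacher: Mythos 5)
Your proof is correct and follows essentially the same route as the paper's: the factorization theorem (Theorem~\ref{th:factorizationsub} with Corollary~\ref{cor:thinss}) reduces to an element $\mathfrak n$ of $Z_t\cap\aiMeas(G,K)^{\ast\ast}$, the inductive hypothesis is applied to $\mathfrak n\squ\lambda_L$, these are identified with $\mu\conv\lambda_L$ for the measure $\mu$ obtained by restricting $\mathfrak n$ to $C_0(G/K)$, a cluster-point/approximate-unit argument yields \wstar convergence of the net, and Proposition~\ref{ssnolimit} with Lemma~\ref{lemma:approx} closes the induction. The only (harmless) deviation is that where the paper invokes \cite{Dales}\;Prop.\,2.9.16 to get $\mathfrak m\squ\bar\delta=\mathfrak m$, you derive the identity $\mathfrak n\squ\mathfrak e=\mathfrak n$ directly, from $\nu\squ\mathfrak e=\nu$ for genuine measures $\nu\iin\aiMeas(G,K)$ together with \wstar continuity of the first Arens product in its left variable.
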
\noindent
The case of the trivial subgroup $K=\{e_G\}$ gives the Main Theorem.

\Proof
We will use induction on $\tau=\chi(G/K)$. The case where $G/K$ is metrizable
(i.e. \!$\chi(G/K)\le\aleph_0$) has been settled in Theorem \ref{th:caseI}.
Thus we can assume that $G/K$ is non-metrizable and that the theorem
holds for all subgroups $L\iin\scrK_\tau^\circ$\,.\linebreak
Put \,$M_0=\ssMeas(G,K)$, $M_1=\aiMeas(G,K)$, $M_2=\Meas(G/K)$
\,and $\widetilde M_0=\ssMeast\tau$\,,
$\widetilde M_1=\aiMeast\tau$\,,
$\widetilde M_2=\Meas_\tau$\,. Then
Theorem \ref{th:factorizationsub} and Corollary \ref{cor:thinss} show that
\[
Z_t\bigl(\MeasG^{\ast\ast}\bigr)\cap\Meas(G/K)^{\ast\ast}\subseteq
\ssMeas(G,K)\oplus\aiMeas(G,K)^{\ast\ast}.
\]
Thus (since $\MeasG\subseteq Z_t\bigl(\MeasG^{\ast\ast}\bigr)$\,) it will be
enough to consider the case where \,$\mathfrak m\in
Z_t\bigl(\MeasG^{\ast\ast}\bigr)\cap\aiMeas(G,K)^{\ast\ast}$\,.

By Lemma\,\ref{Identify}, $\Meas(G/K)=\MeasG\star \lambda_K$\,.
Under the standard embedding of the bidual
(see the beginning of Section~\ref{sec:dirsums}), it follows easily
(using \wstar density and continuity) that
\,$\Meas(G/K)^{\ast\ast}=\MeasG^{\ast\ast}\squ\lambda_K$ \,holds.
$Z_t\bigl(\MeasG^{\ast\ast}\bigr)$ being a subalgebra, the inductive
assumption implies that \,$\mathfrak m\squ\lambda_L=\mu_L\in\Meas(G/L)$
for all $L\iin\scrK_\tau^\circ$\,. Let
$\mu\iin\Meas(G/K)$ be the measure obtained by restricting $\mathfrak m$
to $C_0(G/K)$\ \;(subspace of $\Meas(G/K)^\ast)$. We get
\,$\langle \mu_L,f\rangle=\langle \mathfrak m\squ\lambda_L,f\rangle=
\langle \mu,\lambda_L\odot f\rangle=\langle \mu\conv\lambda_L, f\rangle$
\;for all\linebreak
$f\iin C_0(G/K)$. Hence \,$\mu_L=\mu\conv\lambda_L$\,. Now let
\,$\bar\delta\in\aiMeas(G,K)^{\ast\ast}$ be a
\wstar accumu\-lation point of the net \,$(\lambda_L)_{L\in\scrK_K}$\,.
By Lemma\,\ref{lemma:approx}, $(\lambda_L)_{L\in\scrK_K}$
is a right approximate unit for $\aiMeas(G,K)$. Since \,$\mathfrak m\in
Z_t\bigl(\MeasG^{\ast\ast}\bigr)$ implies
\,$\mathfrak m\in Z_t\bigl(\aiMeas(G,K)^{\ast\ast}\bigr)$, it\linebreak
follows that
\,$\mathfrak m\squ\bar\delta=\mathfrak m$ \,(see
\cite{Dales}\;Prop.\,2.9.16 and its proof). Then from\linebreak
\,$\mu\in\MeasG\subseteq Z_t\bigl(\MeasG^{\ast\ast}\bigr)$, we get, by using
an appropriate refinement of the net $(\lambda_L)_{L\in\scrK_K}$\,: \ \
$\mathfrak m\squ\bar\delta=\lim \mathfrak m\squ\lambda_L=\lim\mu_L=
\lim\mu\star\lambda_L=\mu\squ\bar\delta$\,. Hence
\,$\mathfrak m=\mu\squ\bar\delta$\,.
Since this holds for every accumulation point $\bar\delta$\,, it follows
that \,$\mathfrak m=\lim_{L\in\scrK_K}\,\mu\conv\lambda_L$ \,(\wstar limit
in~$\MeasG^{\ast\ast}$). Then Proposition\;\ref{ssnolimit} implies
\,$\mu\iin\aiMeas(G/K)$ and by Lemma\;\ref{lemma:approx} we get
\,$\mathfrak m=\mu\in\Meas(G/K)$.
\end{proof}

\begin{FRems}\vskip 3mm plus 3mm
\item{(a)} \,There are examples where $\Meas(G/K)$ is {\it not} strongly
Arens irregular (notation as in Theorem\,\ref{th:caseII}). It turns out that
for non-commutative $G$ the left topological centre $Z_t^{(1)}$ and the right
topological centre $Z_t^{(2)}$ \,(\cite{DaLa}\;Def.\,2.17) need not coincide.
One can show in a similar way
as in the proof of Theorem\,\ref{th:caseII} that
\;$Z_t^{(2)}\bigl(\Meas(G/K)^{\ast\ast}\bigr)=\Meas(G/K)$ holds for all
compact subgroups $K$ of a locally compact group $G$ \,(this can be seen as an
example for the more general approach of [left] group actions, as sketched in
Remark\,\thetag{b} after Corollary\,\ref{cor:thinss}).

Furthermore, there is the space \,$G//K$ of double cosets (see \cite{Jew},
in \cite[22.6]{Dieud}) it is denoted as $K\backslash G/K$). As
in Lemma\,\ref{Identify}, we may identify $\Meas(G//K)$ with the subalgebra
$\lambda_K\star\MeasG\star \lambda_K$ of
\,$\Meas(G,K)=\MeasG\star \lambda_K\;\subseteq\,\MeasG$ \;and it is not hard
to see that \,$Z_t^{(1)}\bigl(\Meas(G//K)^{\ast\ast}\bigr)\,=
Z_t^{(1)}\bigl(\Meas(G/K)^{\ast\ast}\bigr)\,\cap\,\Meas(G//K)^{\ast\ast}$.
Consider now \,$G=SL(2,\complex),\ K=SU(2)$ (a maximal compact subgroup).
Then one can show that $Z_t^{(1)}\bigl(\Meas(G//K)^{\ast\ast}\bigr)\supsetneq
\Meas(G//K)$ and this implies that
\[
Z_t^{(1)}\bigl(\Meas(G/K)^{\ast\ast}\bigr)\supsetneq\Meas(G/K)\ \
(=Z_t^{(2)}\bigl(\Meas(G/K)^{\ast\ast}\bigr)\,).
\]
Thus $\Meas(G//K)$ and $\Meas(G/K)$ are not strongly Arens irregular (in this
example). On the other hand, one can show that 
\,$Z_t^{(1)}\bigl(\aMeas(G//K)^{\ast\ast}\bigr)=\aMeas(G//K)$ and
$Z_t^{(1)}\bigl(\aMeas(G/K)^{\ast\ast}\bigr)=\aMeas(G/K)$.
Thus $\Meas(G//K)$ and $\Meas(G/K)$ are not Arens regular (in this
example). The convolution structure of $\Meas(G//K)$ is described explicitly
in \cite[15.5]{Jew}. In particular, one has $\mu\star\nu\in\aMeas(G//K)$ for
all $\mu,\nu\in\Meas(G//K)$ with $\mu(\{K\})=\nu(\{K\})=0$ (showing a big
difference to the abelian case). It follows that for $\mu\in\Meas(G//K)$
(with $\mu\neq0,\;\mu(\{K\})=0$) there
is no analogue of Theorem\,\ref{th:perfectset} when replacing translates
$\mu\star x$ by  ``generalized translates" $\mu\star \nu_x$ with
$\nu_x=\lambda_K\star\delta_x\star\lambda_K$\,. More details will be given
elsewhere.
\vspace{1mm plus 2mm}
\item{(b)} \,In most of the paper the focus has been on
locally compact groups and many proofs made heavy use of local compactness.
We want to sketch here another approach which allows to treat some
classes of non-locally compact groups.

First observe that if $N$ is a nowhere dense subset of a topological group
$G$\,, then (since group multiplication is an open mapping)
$\{(x,y):\,xy^{-1}\in N\}$ must be nowhere dense in $G\times G$\,.

Assume now that $G$ is a Polish group. Then one can apply a result of
Mycielski (\cite[Theorem\,1]{Myc}) and it follows that given a non-empty
meagre subset $Z$ of $G$\,, there exists
a perfect set $P$ in $G$ such that $xy^{-1}\notin Z$ for all
distinct $x$ and $y$ in~$P$. Next observe that if $G$ is {\it not} locally
compact then every compact subset must be nowhere dense. Hence, if $A$ is
any $\sigma$-compact subset of $G$\,, then $Z=A^{-1}A$ is meagre.
Applying the result above, there exists a perfect set $P$ in $G$ such that the
sets $Ax\ (x\in P)$ are pairwise disjoint. Now, if $\mu$ is an arbitrary
finite Borel measure on a non-locally compact Polish group $G$\,, then
(being a Radon measure) it is concentrated
on a $\sigma$-compact subset and it follows that $\mu$ is $2^{\aleph_0}$-thin
\;(recall that $\card{P}=2^{\aleph_0}$ for perfect subsets).

For every uncountable Polish group, $\card{\MeasG}=2^{\aleph_0}$ \
(the algebra of sets generated by a countable basis of the topology is again
countable and by elementary measure theory, a finite, $\sigma$-additive set
function on a $\sigma$-algebra is uniquely determined by its values on a
generating subalgebra), in particular
$d\bigl(\MeasG\bigr)=2^{\aleph_0}$. We can apply now the arguments of
Section \ref{sec:dirsums} and \ref{sec:factorization} \;(actually,
local compactness is not needed there) and it follows that
$Z_t\bigl(\MeasG^{\ast\ast}\bigr)=\MeasG$ holds for every Polish
group $G$. Thus $\MeasG$ is strongly Arens irregular for Polish groups.
\end{FRems}

\renewcommand{\baselinestretch}{1}\normalsize

\end{document}